\newcommand{\mycomment}[1]{}
\newtheorem{theorem}{Theorem}[section]
\newtheorem{lemma}[theorem]{Lemma}
\newtheorem{prop}[theorem]{Proposition}
\newtheorem*{assumpt}{Assumptions}
\theoremstyle{remark}
\newtheorem{remark}[theorem]{Remark}
\theoremstyle{definition}
\newtheorem{de}[theorem]{Definition}
\newcommand{\black}{\color{black}}
\newcommand{\er}{Erdős–Rényi }
\DeclareMathOperator{\E}{\mathbb{E}}
\DeclareMathOperator{\PP}{\mathbb{P}}
\DeclareMathOperator{\p}{\mathbb{P}}
\DeclareMathOperator{\quench}{{\textbf{\textup{P}}}}
\DeclareMathOperator{\punifan}{\p^{\textup{an}}}
\DeclareMathOperator{\pmuan}{\p_{\mu}^{\textup{an}}}
\DeclareMathOperator{\N}{\mathbb{N}}
\DeclareMathOperator{\cQ}{\mathcal{Q}}
\DeclareMathOperator{\mumax}{\mu_{\textup{in}}^{\textup{max}}}
\DeclareMathOperator{\muin}{\mu_{\textup{in}}}
\DeclareMathOperator{\bin}{\text{Bin}}
\DeclareMathOperator{\var}{\textup{Var}}
\DeclareMathOperator{\be}{\text{Be}}
\DeclareMathOperator{\pconv}{\xrightarrow[\textit{n} \to +\infty]{\quad \p \quad}}
\DeclareMathOperator{\trex}{\textit{Tx}}
\DeclareMathOperator{\mass}{\textup{\textbf{m}}}
\newcommand{\mindeg}{\delta_+}
\newcommand{\maxdeg}{\Delta_{+}}
\newcommand{\event}{{\mathcal{E}^+}}
\DeclareMathOperator{\pat}{\mathfrak{p}}
\newcommand{\one}{\textup{\textbf{1}}}
\newcommand{\entropy}{\textup{H}}
\newcommand{\overh}{\overline{\entropy}}
\newcommand{\tent}{t_{\textup{ent}}}
\newcommand{\tcrit}{t_{\lambda}}
\newcommand{\veps}{V_\eps}
\newcommand{\tmix}{t_\textup{mix}}
\newcommand{\whp}{{w.h.p.~}}
\newcommand{\iid}{{i.i.d.~}}
\newcommand{\rhs}{{r.h.s.~}}
\newcommand{\lhs}{{l.h.s.~}}
\newcommand{\eps}{\varepsilon}
\newcommand{\w}{\boldsymbol{w}}
\begin{document}

\title[Mixing cutoff for SRWs on the Chung--Lu digraph]{Mixing cutoff for simple random walks \\ on the Chung--Lu digraph}
\author{Alessandra Bianchi}
\address{Dipartimento di Matematica "Tullio Levi-Civita",
Universit\`a di Padova, Via Trieste 63,
35121 Padova, Italy.}\email{alessandra.bianchi@unipd.it}
\author{Giacomo Passuello}
\email{giacomo.passuello@phd.unipd.it}

\date{\today}
\numberwithin{equation}{section}

	\begin{abstract}
In this paper, we are interested in the mixing behaviour of simple random walks 
on inhomogeneous directed graphs.
We focus our study on Chung--Lu digraphs, which are inhomogeneous networks that generalize 
 \er digraphs, and where edges are included independently and according to given Bernoulli laws. 
To  guarantee that a unique equilibrium measure exists with high probability, we assume  
that the average degree grows logarithmically in the size $n$ of the graph.
In this weakly sparse regime, we  prove that the total variation distance to equilibrium 
displays a cutoff behaviour at the entropic time of order $\log n / \log\log n$. 
Moreover, we prove that on a precise window the cutoff profile converges to the Gaussian tail function.
This is qualitatively similar to what was proved in \cite{BCS18,BCS19,CCPQ} 
for the directed configuration model. 
In terms of statistical ensembles, our analysis provides an extension of these cutoff results 
from a hard to a soft-constrained model.

    \par\bigskip\noindent
    {\it MSC 2010:} primary:  60J10, 05C80, 05C81.
    \par\smallskip\noindent
    {\it Keywords:}  random directed graphs, Chung--Lu model, random walks, cutoff, 
    mixing time.\\

\noindent
    {\it Acknowledgments.}
The authors are grateful to Matteo Quattropani for useful discussions and suggestions.
We also acknowledge the anonymous referees for carefully reading the first version of the manuscript 
and making several valuable comments.
The work is partially funded by the University of Padova through the BIRD project 239937 “Stochastic dynamics on graphs and random structures”, and by the INdAM-GNAMPA Project, CUP E53C23001670001.

	\end{abstract}

	\maketitle

\section{Introduction}

The analysis  of stochastic processes evolving on a random structure is a fundamental tool 
for the understanding of many real-world systems defined on deterministic, but huge, networks.
Applications range from physical to biological systems, from computer science to economics and 
social sciences, and the subject has become in the last twenty years a great source of mathematical problems (see, e.g., the survey \cite{Cooper12} and the book \cite{vdH16}).

In this paper, we are interested in the mixing time of simple random walks moving on 
random \emph{digraphs} (i.e., directed graphs). In this setting, the random walk loses the reversibility property, 
and a key problem, while studying the convergence to equilibrium, is to deal with an unknown 
stationary measure, whose characterization represents itself an important theoretical challenge (see, e.g., \cite{chen14, chen17, GvHL18, CQ20}).
Starting from the seminal work of E.~Lubetzky and A.~Sly \cite{ls10}
on the undirected regular random graph,
a series of techniques 
have been devised in order to study the convergence to equilibrium,
and in particular to establish
the occurrence of the so-called \emph{cutoff phenomenon}.  
A Markov chain exhibits a mixing cutoff if there exists a time scale, a function of the size of the system,
on which the distance to equilibrium displays an abrupt decay.
This limit behaviour highlights a \emph{phase transition}, which is visible at rougher time scales.
We refer to \cite{Dia96, LevPer:AMS2017} for an introduction on the topic.

Showing the existence of such a universal profile requires a deep understanding of the interactions between environment and dynamics.
It was observed in \cite{BS17, B} that \emph{non-backtracking} random walks allow a fine control on both the equilibrium measure and the diffusive properties of the system.
With this regard, digraphs constitute a good framework to deal with.
We refer in particular to the works of C.~Bordenave, P.~Caputo and J.~Salez \cite{BCS18, BCS19}, as well as to \cite{CCPQ}, for results concerning the \emph{directed configuration model}, and to \cite{CQ21} for consequent achievements on \emph{PageRank surfers}.
Such \emph{inhomogeneous} models serve as natural tool for studying  dynamics on internet networks. 

In the present work, we analyse the motion of a random walk on a \emph{Chung--Lu digraph}.
This is an inhomogeneous random network obtained 
by sampling edges independently via vertex weights, which represent fixed average degrees.
This setting clearly includes, as particular cases, the directed homogeneous \emph{\er graph} and the 
\emph{stochastic block model}.
To ensure that the random graph is strongly connected, and hence to guarantee the  
uniqueness of the equilibrium measure, we will work on a \emph{weakly sparse regime},
where the average vertex degrees grow as $\log n$, $n$ being the size of the graph.

Our study will mainly refer to the techniques introduced in \cite{BCS18, BCS19}
to deal with the dynamics on the directed configuration model in the sparse regime.
As highlighted in these papers (see also \cite{CQ20, CQ21, CCPQ, ACHQ} for further developments),  
two fundamental statistics for the characterization of the mixing time
are the \emph{in-degree distribution}, which provides an easily computable approximation of the reversible measure,
and the \emph{entropy} of the graph, which measures  the spread of the random walk among the network. 
However, a main hurdle in implementing theses ideas in 
our framework
is that vertex degrees are random, as well as the corresponding in-degree distribution.
To overcome this difficulty we shall introduce an approximated, but deterministic,
 in-degree distribution (see \eqref{muin}), and then leverage on some \emph{concentration results}
 on the vertex degrees  in order to control this approximation error along the dynamics 
 and to characterize asymptotically the entropy (see \eqref{entropy} and Proposition \ref{prop-entropy}).
By implementing this entropic method, devised in \cite{BCS18, BCS19},  we will prove that 
under suitable assumptions the dynamics exhibits a cutoff phenomenon at a time of order 
$\log n / \log\log n$.
Moreover, we will show that, in an appropriate time window, the cutoff profile approaches a 
Gaussian tail function.
This work can be seen as a generalization of the cutoff results
achieved in \cite{BCS18,CCPQ}, where hard constraints on vertex degrees are replaced with a softer randomized version.

\subsection{Graph setting} \label{setting}

	Let $[n]:=\{1,\dots,n\}$ represent a set of vertices of size $n\in\N$, and consider two sequences $(w^-_x)_{x\in[n]}$ and 
$(w^+_x)_{x\in[n]}$ of positive numbers, called weights, such that 
	\begin{equation}
	 \sum_{x \in [n]}w_x^+=\sum_{x \in [n]}w_x^-=: \w(n) =\w \, . 
	 \end{equation} 
	We consider a directed version of the Chung--Lu model, where two distinct vertices $x, y\in[n]$ are connected by an oriented edge from $x$ to $y$, in short $x\to y$, independently and with probability 
	\begin{equation}\label{prob_conn}
	p_{xy}=w^+_x w^-_y\frac{\log n}{n} \wedge 1\,,\quad \forall 
	x,y \in [n]\,, x\neq y\,.
	\end{equation}
We will denote by $\p= \p_n^{w^{\pm}}$ the law of this Chung-Lu random graph 
and by $\E$ the corresponding average, and write $G$ for a given realization of the graph.

		\begin{remark}
		The standard non-oriented Chung--Lu model, introduced in \cite{ChungLu},
		 is defined through a sequence of positive weights $(\widetilde w_x)_{x\in[n]}$ and connection probabilities		
			$$
			p^{\textup{(CL)}}_{xy}:=\frac{\widetilde w_x \widetilde w_y}{\ell_n} \wedge 1, \quad  \forall x\neq y \in [n],  \qquad \text{where} \qquad \ell_n:=\sum_{x \in [n]}\widetilde w_x.
			$$
This can be easily adapted to the  above directed framework taking two sequences $(\widetilde w_x^{\pm})_{x\in[n]}$ with equal sum,  and setting
\begin{equation}\label{prob_DCL}
		p^{\textup{(DCL)}}_{xy}:=\frac{\widetilde w_x^+ \widetilde w_y^-}{\ell_n} \wedge 1, 
		\quad \forall x\neq y \in [n],\qquad \text{where} \qquad \ell_n:=\sum_{x \in [n]}\widetilde w_x^+=\sum_{x \in [n]}\widetilde w_x^-\,.
\end{equation}
Choosing $\widetilde w_x^\pm=w_x^\pm \w\frac{\log n}{n}$ and plugging this value in \eqref{prob_DCL},
 we get that $\ell_n = \sum_{x \in [n]}\widetilde w_x=\w^2\frac{\log n}{n}$,
and we recover our model.
	\end{remark}
\vspace{0.5cm}
	
As main observables on this random structure, we introduce the random out-degree of a vertex $x \in [n]$, 
denoted by $D_x^+$, and set
\begin{equation}
\mindeg:= \min_{x\in[n] }D_x^+ \quad \mbox{ and }\quad \maxdeg:= \max_{x\in[n]} D_x^+\,, 
\end{equation}
which are, respectively,  the minimum and  maximum out-degree of the random graph. 
With obvious notation, we introduce also the corresponding in-degrees random variables 
$(D_x^-)_{x\in[n]}$, $\delta_-$ and $\Delta_-$. 

By assumption, the out- and in-degrees of  each vertex $x \in [n]$  are distributed as a sum of 
independent Bernoulli random variables of parameters $p_{xy}$ and $p_{yx}$ respectively, for $ y \in [n] \setminus \{x\}$.
In particular, their averages  are easily given by 
	\begin{equation}\label{media_grado}
			\E[D_x^\pm]=\sum_{y \in [n]\setminus \{x\}} w_x^\pm w_y^\mp \frac{\log n}{n}. 
		\end{equation}
Along the paper, we will use the following Landau symbols: 
given two real sequences $(a_n)_{n\in\N}$ and $(b_n)_{n\in\N}$ such that 
$\lim_{n\to\infty}\big| \frac{a_n}{b_n}\big|=\ell\,,$ we will write
$a_n=o(b_n)$ (resp. $a_n=\Omega(b_n)$, $a_n=O(b_n)$, $a_n=\Theta(b_n)$, and $a_n\gg b_n$) if 
$\ell=0$ (resp. $\ell>0$, $\ell<\infty$, $0<\ell<\infty$, and $\ell=\infty$).
%
%
In Subsection \ref{sec_ass}, we will set assumptions on $(w^{\pm}_x)_{x\in[n]}$ 
which imply, in the above notation, that $\E[D_x^+]= \Theta(\log n)$, for $x \in [n]$. 
The corresponding random graph will be then in a \emph{weakly sparse regime}.

At last, note that the \er digraph with connection probability $p =\lambda \log n/n$, for \mbox{$\lambda >0$}, corresponds to a homogenous Chung--Lu digraph with constant weights $w^{\pm}_x \equiv \sqrt{\lambda}$.

\subsection{Simple random walk on the Chung--Lu digraph} \label{SRW}
	The Chung--Lu model that we have just portrayed offers a good framework to study random dynamics. 
	We consider the discrete time simple random walk, $(X_t)_{t\in\mathbb{N}}$,  whose transition matrix is
\begin{equation}
	P(x,y):=\begin{cases}
		\frac{1}{D^+_x} & \textup{if } x\to y  \\
				0 &\textup{otherwise}
			\end{cases},
		\,\,\quad \forall x,y \in [n].
\end{equation}
 For every time $t>0$
(for the sake of simplicity $t$ has to be understood as an integer, or its integer part),
 we denote with $P^t(\cdot,\cdot)$ its related $t$-step transition kernel, while
for an oriented path $\pat=(x_0,\dots,x_t)$ in the graph,  we define  the probability mass of $\pat$ as
\begin{equation}\label{mass}
	\mass(\pat):=\prod_{i = 0}^{t-1} P(x_i,x_{i+1})\,,
\end{equation} 
which corresponds to the probability that a random walk starting at $x_0$ follows the trajectory $\pat$.
We point out that $\mass(\cdot)$, as the transition kernel $P(\cdot,\cdot)$, is a random object whose dependence on the random
graph is implicit in the notation. 

For any given realization $G$ of the random graph, 
we will consider the quenched law $\quench^G_\mu$ of the random walk with initial distribution $\mu$,
which is the probability measure acting on the set of trajectories realized on the given graph $G$. 
Averaging over all graph realizations, we obtain the corresponding annealed law $\p^{\textup{an}}_\mu$, which is defined by $\p^{\textup{an}}_\mu(A):=\E[\quench_\mu(A)]$ for every measurable set  $A$ of trajectories of the random walk.
 In our framework the random structure is fixed once forever. We refer to \cite{CQ21tricotomy,AveGulVDHdH:DirConf2018,AGHHN,ST} for the analysis of dynamic networks. 

\subsubsection{\textbf{Uniqueness of the invariant distribution}}

As long as a realization $G$ of the Chung--Lu digraph is strongly connected, i.e. 
there exists a directed path among  every couple of vertices $x,y\in[n]$, the irreducibility condition of  simple random walks is satisfied and this guarantees that  there exists a unique 
invariant measure $\pi$ on $[n]$ such that $\pi P=\pi$.

Then, we are at first interested in finding sufficient conditions which ensure strong connectivity 
with probability tending to 1 as $n\to \infty$ (in short \emph{with high probability} or simply \emph{w.h.p.}).
It was proved in \cite{CF12} that  the \er digraph with parameter $\lambda \log n/n$,
where $\lambda >1$, is \whp strongly connected. 
Provided that there exists a constant $\lambda$ such that $w^+_xw_y^-\ge \lambda>1$ 
for every $x,y \in [n]$, and since the strong connectivity is a monotone increasing property of graphs,
a simple coupling argument leads to the same conclusion for the Chung--Lu graph.
Hence, this condition guarantees the existence and uniqueness of the invariant distribution $\pi$,
and it will be part of the set of assumptions on the graph setting that will be given below, before stating the main results.\\

Provided that the stationary distribution is unique, the main goal of this work is to characterize the mixing time of the random walk, which is defined, for any initial state $x \in [n]$ and any precision $\eps\in(0,1)$,  as
\begin{equation}
	\tmix^{(x)}(\eps):=\inf\{t > 0: \,\|P^t(x,\cdot)-\pi\|_{\textup{TV}} \le \eps \}\,, 	
\end{equation}

where $\|\mu-\nu\|_{\textup{TV}}:=\frac{1}{2}\sum_{x\in[n]}|\mu(x)-\nu(x)|=\sum_{x \in [n]}\left[\mu(x)-\nu(x)\right]^+$, is the total variation distance among the probability measures $\mu$ and $\nu$. Here $[u]^+:=\max\{0,u\}$, for $u\in\mathbb{R}$. 

We stress once more that the mixing time depends on the realization $G$ of the graph, though 
the dependence is implicit in the notation. 
We will prove that our estimates on $\tmix^{(x)}(\eps)$ hold in $\p$-probability as $n\to\infty$.

\subsubsection{\textbf{In-degree distribution}}

One of the main hurdles to estimate the mixing time of simple random walks on digraphs is that 
the stationary measure $\pi$ cannot be explicitly computed. 
In this respect, a useful tool is provided by the following probability measure on the set $[n]$, 
\begin{equation} \label{muin}
	\muin(x):=\frac{w^-_x}{\w}, \quad \text{for } x \in [n]\,.
\end{equation}
The measure $\muin$ can  be seen as an approximate averaged in-degree distribution. 
Specifically, under the upcoming assumption \eqref{eq1}, we can deduce that for large $n$
\begin{equation}
	\E[D_x^-] = \w \frac{\log(n)}{n} w_x^- (1+o(1)), \quad \quad
	\sum_{x \in [n]}\E[D_x^-] = \w^2\frac{\log(n)}{n} (1+o(1)),
\end{equation}
and then, taking the ratio among the two terms, we get
\begin{equation}\label{muin2}
\muin(x)	= \frac{\E[D_x^-]}{\sum_{x\in [n]}\E[D_x^-]}(1+o(1))\,.
\end{equation}
The measure $\muin$  will then simply referred to as in-degree distribution, and it will
naturally appear through the proofs as a fundamental object in understanding the mixing mechanism 
of the dynamics.

\subsection{Assumptions and main results}\label{sec_ass}
\begin{assumpt}
	We assume that:
	\begin{enumerate}
		\item There exist constants $M_0, M_1 >1$ such that, for every $n\in\N$,  
			\begin{equation} \label{eq1}
				\begin{split}
					 M_0 \le  w^+_x \le M_1 < +\infty, \quad \forall x \in [n]; 
				\end{split}
			\end{equation}
		\item There exist constants $M_2>0$ and $0<\eta<1$ such that, for every $n\in\mathbb{N}$,
			\begin{equation} \label{assumption} 
				\begin{split}
					\sum_{x \in [n]}{(w_x^-)}^{2+\eta} \le M_2n.
				\end{split}
			\end{equation}
	\end{enumerate}
\end{assumpt}
\subsubsection*{\textbf{Immediate consequence of the assumptions.}} 
Notice that, as a consequence of \eqref{eq1}, 
$$\w = \Theta(n)\quad \mbox{ and }\quad \E[D_x^{+}]=\Theta(\log n)\,,\,\,\forall x \in [n]\,.$$
Moreover, exploiting \eqref{assumption}, we get that $\max_{x \in [n]} {w_x^-}^{2+\eta}\le M_2 n$,
and	thus
	\begin{equation} \label{maxa-}
		{w_x^-} \le (M_2 n)^{\frac{1}{2+\eta}} \le (M_2 n)^{\frac{1}{2}-\frac\eta6},\quad \forall x \in [n]\,,
	\end{equation}
which in turn implies, by \eqref{prob_conn},  that
	\begin{equation} \label{maxp_xy}
		p_{\text{max}}:=\max_{x\neq y \in [n]} p_{xy} = o(n^{-\frac 1 2-\frac \eta 7}),
	\end{equation}
and 
\begin{equation} \label{mumax}
	\mumax:=\max_{x \in [n]}\muin(x) = O(n^{-\frac{1}{2}-\frac\eta6}).
\end{equation}
In particular, following the terminology introduced in \cite{CQ21}, the above assumptions imply that $\muin$ is a \emph{widespread measure}.

\subsubsection{\textbf{Main results}}	
 Before stating the main results, and following the  procedure traced in \cite{BCS18}, 
 we need to introduce two fundamental quantities that will characterize the mixing time and the cutoff window of the dynamics. 
  We define the entropy $\entropy$ of the Chung--Lu model as the mean logarithmic out-degree of a vertex sampled 
  from $\muin$ (see \eqref{muin}). 
 Formally, denoting by $V$ a random vertex in $[n]$ with law $\muin$, we set  
		\begin{equation} \label{entropy}
			\entropy:= \E \times \muin\left[\log \left(D_V^+\vee 1\right)\right] ,
		\end{equation} 
and let $\sigma^2$ be the corresponding variance, hence given by
	\begin{equation} \label{variance-sigma}
		\sigma^2:= \E \times \muin\left[\log^2 \left(D_V^+\vee 1\right) \right]-\entropy^2\,.
	\end{equation}
We also define the \emph{entropic time}
	\begin{equation}\label{t_ent}
		\tent:=\frac{\log n}{\entropy}\,,
	\end{equation}
which we will show to be precisely the mixing time of the dynamics. 	
In this sense, it is useful to state the following preliminary result
which provides the asymptotic behaviour of $\entropy$ and $\sigma^2$, as $n\to\infty$. 
\begin{prop} \label{prop-entropy} 
 Under the assumptions \eqref{eq1} and \eqref{assumption}, it holds
			\begin{equation}
			{\entropy}={\log\log n} (1+o(1)), \quad \quad
			\sigma^2 = O(\log\log n).
		\end{equation}
	\end{prop}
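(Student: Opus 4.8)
The plan is to compute the mean and second moment of $\log(D_V^+ \vee 1)$ where $V \sim \muin$, by first replacing the random out-degree $D_V^+$ by its mean $\E[D_V^+]$ and then controlling the error. Recall from \eqref{media_grado} and the assumption \eqref{eq1} that $\E[D_x^+] = \Theta(\log n)$ uniformly in $x \in [n]$, so that $\log \E[D_x^+] = \log\log n + O(1)$ for every $x$. If the random variable $D_x^+$ were sharply concentrated around its mean for \emph{every} $x$, we would immediately get $\log(D_x^+ \vee 1) = \log\log n (1+o(1))$ and hence both claims. So the heart of the matter is a concentration estimate for $D_x^+$, which is a sum of independent Bernoulli variables with mean $\Theta(\log n)$.

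First I would record a Chernoff bound: for each $x$, since $D_x^+ = \sum_{y \neq x} \be(p_{xy})$ with $\E[D_x^+] = \Theta(\log n)$, for any fixed $\delta \in (0,1)$ we have $\p\big(|D_x^+ - \E[D_x^+]| > \delta \E[D_x^+]\big) \le 2\exp(-c\delta^2 \log n) = n^{-c\delta^2}$. Taking $\delta$ a small constant this is $o(1)$ per vertex but \emph{not} summable over $[n]$; however, we do not need a union bound over all vertices — we only need an estimate that survives averaging against $\muin$. Concretely, split
\[
\entropy = \E\times\muin\!\big[\log(D_V^+\vee 1)\,\one_{\mathcal{G}}\big] + \E\times\muin\!\big[\log(D_V^+\vee 1)\,\one_{\mathcal{G}^c}\big],
\]
where $\mathcal{G} = \mathcal{G}(V)$ is the good event $\{D_V^+ \in (1\pm\delta)\E[D_V^+]\}$. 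On $\mathcal{G}$ the integrand equals $\log\log n + O(1)$, and since $\muin$ is a probability measure this term contributes $\log\log n (1+o(1))$. For the bad-event term, use $\log(D_V^+\vee 1) \le \log \Delta_+ \le \log n$ deterministically (since no out-degree exceeds $n-1$), together with $\E\times\muin[\one_{\mathcal{G}^c}] = \sum_x \muin(x)\,\p(\mathcal{G}(x)^c) \le \max_x \p(\mathcal{G}(x)^c) \le n^{-c\delta^2}$; hence this term is at most $n^{-c\delta^2}\log n = o(1)$. This proves $\entropy = \log\log n(1+o(1))$. For $\sigma^2$ it suffices to bound the second moment $\E\times\muin[\log^2(D_V^+\vee 1)]$: on $\mathcal{G}$ it is $(\log\log n)^2 + O(\log\log n) = O((\log\log n)^2)$... wait, that gives $O((\log\log n)^2)$, which is weaker than claimed, so here one must be more careful: write $\log^2(D_V^+\vee 1) - \entropy^2 = (\log(D_V^+\vee 1)-\entropy)(\log(D_V^+\vee1)+\entropy)$, and on $\mathcal{G}$ the first factor is $O(1)$ while the second is $O(\log\log n)$, giving the $O(\log\log n)$ bound; the bad-event contribution is again at most $n^{-c\delta^2}\log^2 n = o(1)$.

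The main obstacle is precisely the point just flagged: getting the sharper $\sigma^2 = O(\log\log n)$ rather than the crude $O((\log\log n)^2)$ requires exploiting the \emph{cancellation} in $\log^2(D_V^+\vee1) - \entropy^2$ via the factorization above, so that the concentration estimate $|\log(D_V^+\vee1) - \log\log n| = O(1)$ on $\mathcal{G}$ is used to control one of the two factors. One should also double-check that the $O(1)$ fluctuation of $\log(D_x^+\vee1)$ on the good event is genuinely bounded uniformly in $x$ — this follows since $\E[D_x^+]/\log n$ lies in a fixed compact subinterval of $(0,\infty)$ by \eqref{eq1}, so $\log\E[D_x^+] = \log\log n + O(1)$ with the $O(1)$ uniform, and $\log((1\pm\delta)\E[D_x^+]) = \log\E[D_x^+] + O(\delta)$. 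Everything else is the routine Chernoff bound plus the observation that averaging against a probability measure lets us replace a failed union bound by a pointwise-maximum bound on the failure probability.
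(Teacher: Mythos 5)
Your proposal is correct and follows essentially the same strategy as the paper's own proof: a Chernoff/Bernstein concentration bound showing $D_x^+$ is within a constant factor of $\E[D_x^+]=\Theta(\log n)$ with polynomially small failure probability (the paper packages this as Lemmas~\ref{C} and~\ref{c}, which in turn invoke Proposition~\ref{concentration-inequality}), a split into good and bad events averaged against $\muin$, and then the same cancellation in $\sigma^2$ between $\E\times\muin[\log^2(D_V^+\vee 1)]$ and $\entropy^2$. The only cosmetic difference is that you realize the cancellation through the factorization $a^2-b^2=(a-b)(a+b)$, whereas the paper expands $(\log(C\log n))^2$ and subtracts; both hinge on the sharper intermediate fact $\entropy=\log\log n+O(1)$ (not merely $\log\log n(1+o(1))$), which you correctly establish and use.
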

While the proof of the above proposition is postponed to Subsection \ref{sec_entropy},
we can immediately argue  that the entropic time $\tent$ is asymptotically 
of order $\log n/\log\log n$.
With that in mind, and with the usual convention that the discrete dynamics is evaluated in the integer part of each considered time, we can state our main results.
\begin{theorem}[Cutoff] \label{thm_main}
	Let $\beta > 0$. It holds 
	\begin{equation}\label{main1}
		\min_{x \in [n]}\|{P}^{\, (1 - \beta) \, \tent}(x,\cdot)-\pi\|_{\textup{TV}}\pconv 1.
	\end{equation}
	and
	\begin{equation}\label{main2}
		\max_{x \in [n]}\|{P}^{\, (1 + \beta) \, \tent}(x,\cdot)-\pi\|_{\textup{TV}}\pconv 0.
	\end{equation}
\end{theorem}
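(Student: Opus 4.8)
The plan is to follow the entropic method of Bordenave–Caputo–Salez, adapted to the randomness of the Chung–Lu environment. The overarching strategy for the lower bound \eqref{main1} is to exhibit a set $S_t \subseteq [n]$ on which $\pi$ places almost all its mass while $P^{(1-\beta)\tent}(x,\cdot)$ places almost none, for every starting vertex $x$. The natural candidate is a ``light'' set defined in terms of the path-mass functional $\mass(\cdot)$: a trajectory of length $t = (1-\beta)\tent$ has, with high probability under $\quench_x$, a mass $\mass(\pat)$ concentrated around $e^{-t\entropy} = n^{-(1-\beta)}$, by a law-of-large-numbers for $\sum_{i<t}\log D^+_{X_i}$ whose drift is $\entropy = \log\log n (1+o(1))$ (Proposition \ref{prop-entropy}) and whose fluctuations are $O(\sqrt{t\,\sigma^2}) = O(\sqrt{\log n})$, hence negligible on the scale $\log n$ of the exponent. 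One shows that the endpoint $X_t$ lands w.h.p.\ in the set where $n\,\muin(y)$ is not too large, and that on that set $P^t(x,y)$ is comparable to (a sum of) path masses, which are each $\le n^{-(1-\beta/2)}$ say; since there are at most $n$ endpoints, the walk's mass on the complement of a high-$\pi$ set is $o(1)$, while $\pi$ — which by the arguments below is well approximated by $\muin$ — is $\approx 1$ there. The key input here is that $\pi$ is close to $\muin$ in a suitable sense, which itself follows from running the chain from stationarity for an extra $o(\tent)$ steps and using \eqref{muin2}.

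For the upper bound \eqref{main2}, the standard route is the $L^2$/annealed second-moment computation: estimate $\pmuan\big[\,\mathcal{C}\,\big]$, the annealed probability that two independent walks of length $t=(1+\beta)\tent$ started from stationarity have met and then coalesced, or more precisely bound $\E\big[\|P^t(x,\cdot)-\pi\|_{\mathrm{TV}}\big]$ by controlling $\sum_y P^t(x,y)^2/\muin(y)$ via a union bound over pairs of non-intersecting paths. Concretely, I would: (i) show that w.h.p.\ the walk from any $x$ has, after $\theta\tent$ steps with $\theta$ small, spread onto $\Theta(n^{\theta'})$ vertices with masses all $\le n^{-\theta''}$ (an entropic-spreading estimate, using the widespread property \eqref{mumax} and the concentration of out-degrees); (ii) couple the quenched walk to an annealed exploration where, as long as the two paths avoid their joint history of size $o(\sqrt n)$, each new edge is re-sampled with the Chung–Lu law, so that the probability two such paths intersect in the remaining $(1+\beta)\tent - \theta\tent$ steps is $O\big(n^{2\theta'}\cdot \tent^2 \cdot p_{\max}\big) = o(1)$ by \eqref{maxp_xy}; (iii) conclude that on the good event the two paths are independent, their endpoints have law $\approx\muin\approx\pi$, and the overlap $\sum_y P^t(x,y)\pi(y)/\pi(y)$-type quantity is $1+o(1)$, giving TV distance $o(1)$.

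The bridge between the annealed and quenched pictures — and between $\muin$ and the true $\pi$ — is where the main work lies, and it is exactly the point the introduction flags: degrees are random, so $\muin$ is only an \emph{approximate} averaged in-degree distribution. I would isolate a ``good environment'' event $\mathcal{G}$ of $\p$-probability $1-o(1)$ on which: every out-degree satisfies $D^+_x = \E[D^+_x](1+o(1)) = \Theta(\log n)$ (Chernoff, since $D^+_x$ is a sum of independent Bernoullis with mean $\Theta(\log n)$); every in-degree satisfies \eqref{muin2}; no two vertices of the explored neighbourhoods collide prematurely; and the entropy functional concentrates so that Proposition \ref{prop-entropy}'s estimates hold not just in mean but along typical trajectories. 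On $\mathcal{G}$ one transfers the annealed bounds to quenched ones by a conditioning/martingale argument à la \cite{BCS18}.

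\textbf{Main obstacle.} The hardest step will be step (ii)–(iii) of the upper bound: controlling the \emph{quenched} second moment $\sum_y P^t(x,y)^2/\muin(y)$ when the environment is random. In the directed configuration model the pairing construction makes the exploration genuinely sequential and the "non-intersection $\Rightarrow$ independence" heuristic is exact; in Chung–Lu the edges are independent but \emph{fixed}, so re-exploring a region that has already been visited does not re-randomize it. Making the annealed coupling rigorous therefore requires carefully tracking the $\sigma$-algebra generated by the revealed edges and showing that the walk almost never returns to an already-revealed vertex before time $(1+\beta)\tent$ — which needs the widespread bound \eqref{mumax} together with a quantitative entropic lower bound on the number of distinct vertices visited — and then bounding the error incurred by the (rare) self-intersections. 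Getting the exponents to close with $\beta$ arbitrarily small, rather than $\beta$ bounded below, is the delicate quantitative heart of the argument.
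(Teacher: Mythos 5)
Your lower bound plan shares the paper's structure at a high level: both rely on the fact that the walk's trajectory mass concentrates around $e^{-t\entropy}$ (this is exactly the quenched LLN the paper isolates as Theorem~\ref{thm_LLN}), so that after $(1-\beta)\tent$ steps the walk is confined to a set $S_x$ of size at most $n^{1-\beta^2}$, and then one shows $\pi(S_x)$ is negligible. The paper does this last step \emph{without} ever asserting $\pi\approx\muin$ pointwise: it proves directly (Lemma~\ref{pi}) that $\pi$ gives negligible mass to any set of size $n^{1-6\delta}$, via a $K$-th moment bound on the annealed $K$-walk construction plus a union bound over sets. Your route of ``$\pi$ is well approximated by $\muin$'' is in fact false in this setting --- for SRW on a sparse/weakly-dense digraph the stationary measure is a nontrivial function of the whole local geometry, not merely of in-degrees; what is true is only that $\muin P^{h_\eps}\approx\pi$, and that requires the full mixing argument, so leaning on it in the lower bound would be circular.

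For the upper bound you propose a classical $L^2$/second-moment argument: bound $\sum_y P^t(x,y)^2/\muin(y)$, conclude TV is small. This is a genuine detour from the paper and I don't think it closes. Two separate problems. First, without reversibility there is no $L^2$-contraction principle tying $\chi^2$-closeness at time $t$ to TV-closeness; and the $\chi^2$-quantity you write is a divergence from $\muin$, not from $\pi$, so even if you controlled it you would need $\pi\approx\muin$ pointwise, which again is false. Second, in step~(iii) you write ``their endpoints have law $\approx\muin\approx\pi$'' and an overlap $\sum_y P^t(x,y)\pi(y)/\pi(y)$ which just sums to $1$ --- the bookkeeping here is off, and the natural repair ($\sum_y P^t(x,y)^2/\pi(y)$) runs into the unknown $\pi$ once more. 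The paper's actual upper bound sidesteps all of this. It introduces $\widetilde{\pi}:=\muin P^{h_\eps}$ and bounds $\|P^t(x,\cdot)-\widetilde{\pi}\|_{\textup{TV}}$ by restricting to \emph{nice paths} (trajectory confined to an explicitly constructed tree $\mathcal{T}_x(s)$ for the first $s$ steps, and to a unique in-path for the last $h_\eps$ steps), and then uses a \emph{Bernstein concentration inequality conditionally on the revealed partial environment} (Proposition~\ref{drop-positive-part}) to show that $\widetilde{P}^t(x,y)\le(1+\delta)\widetilde{\pi}(y)+\delta/n$ simultaneously for all $y$. Once the positive part drops, the remaining error $\widetilde q(x)$ is controlled by the quenched LLN. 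The closeness $\|\widetilde{\pi}-\pi\|_{\textup{TV}}\to 0$ then follows \emph{for free} by convexity ($\pi=\pi P^t$), with no independent argument that $\pi\approx\muin$.

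You correctly identify the core obstacle --- in Chung--Lu the edges are fixed once revealed, so one cannot naively re-randomize upon revisits as in the configuration model. But the fix you gesture at (track the revealed $\sigma$-algebra and show the walk almost never returns) is not what the paper does. The paper instead grows the tree $\mathcal{T}_x(s)$ in order of decreasing path mass, proves (Proposition~\ref{tree}) that almost all quenched mass lives in this tree, and then runs a Freedman-martingale argument to control the mass leaking into edges that would create cycles. The excursion argument you sketch, with its $n^{2\theta'}\cdot\tent^2\cdot p_{\max}$ bound, does appear in the paper --- but only inside the proof of Theorem~\ref{thm_LLN}, as the mechanism that transfers $q_t(\theta)$ to $\bar{\cQ}_{x,t}(\theta)$, not as the whole upper-bound machine. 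So: your diagnosis of the difficulty is accurate, but the $L^2$ framework is the wrong vessel for the cure, and the ``$\pi\approx\muin$'' step that your plan uses in both directions is a real gap.
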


\begin{remark}
	By the monotonicity properties of the function 
	$t \mapsto \|P^t(x,\cdot)-\pi\|_{\textup{TV}}$ for $x \in [n]$, 
	we get that \eqref{main1} holds for any $t \le (1-\beta)\tent$, 
	while \eqref{main2} holds for any $t \ge (1+\beta)\tent$.
\end{remark}

The statement can be rephrased as follows: for every precision $\eps \in (0,1)$,	
\begin{equation}
	\max_{x \in [n]}\Bigg|\frac{\tmix^{(x)}(\eps)}{\tent}-1\Bigg|\pconv 0.
\end{equation}
This means that regardless of the starting point and the precision, the random walk takes, 
with high probability for $n$ large enough,  $\log n/\log \log n$ steps to mix.

This abrupt transition from $1$ to $0$ of the distance to stationarity can be further explored by zooming in around 
the cutoff point $\tent$, and in particular by taking an appropriate window of size $\textbf{\textup{w}}_n$, with
\begin{equation}\label{window}
\textbf{\textup{w}}_n:=\frac{\sigma}{\entropy}\sqrt{\tent} \,.
\end{equation}
To avoid pathological situations, we will assume that $\sigma^2$ is non-degenerate 
in the following weak sense: there exists $\delta>0$ such that
\begin{equation} \label{non-degenerate-variance}
	\sigma^2 \gg \frac{(\log \log n)^{2+\frac{\delta}{\delta+2}}}{(\log n)^{\frac{\delta}{\delta+2}}}.
\end{equation}
Note that as $\delta\to\infty$, the r.h.s. reaches the order $(\log\log n)^3/\log n$, providing a non-degeneracy condition  similar to that given in \cite{BCS18}.

The next result shows that, inside this window and under this assumption, the cutoff shape approaches 
the tail distribution of the standard normal.

\begin{theorem}[Cutoff window] \label{thm_window}
Assume that the variance $\sigma^2$ satisfies the non-degeneracy condition \eqref{non-degenerate-variance}.
Then, for  $\tcrit:=\tent+\lambda \textbf{\textup{w}}_n+o(\textbf{\textup{w}}_n)$ with 
$\lambda \in \mathbb{R}$ fixed, it holds 

	\begin{equation}
		\max_{x \in [n]}\left|\|{P}^{t_\lambda}(x,\cdot)-\pi\|_{\textup{TV}}-\frac{1}{\sqrt{2\pi}}\int_{\lambda}^{+\infty}e^{-\frac{u^2}{2}}\,du\right|\pconv 0.
	\end{equation} 
\end{theorem}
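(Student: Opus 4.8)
The plan is to follow the strategy of \cite{BCS18, CCPQ}, refining the analysis behind Theorem \ref{thm_main} to capture the second-order fluctuations. The starting point is the path-counting representation: for a trajectory $\pat = (x_0, \dots, x_t)$ realized on the graph $G$, the quenched probability $\quench^G_{x_0}(\text{path}) = \mass(\pat)$ factorizes over the out-degrees along the path, so $-\log \mass(\pat) = \sum_{i=0}^{t-1} \log D^+_{x_i}$. Since $\muin$ approximates the stationary measure $\pi$ (cf.\ \eqref{muin2}), one expects the TV distance $\|P^t(x,\cdot) - \pi\|_{\textup{TV}}$ to be governed by whether $-\frac{1}{t}\log \mass(\pat)$ is below or above the critical level $H$; more precisely, by the event $\{ -\log \mass(X_{[0,t]}) \le \log n \}$ versus its complement, which is why $\tent = \log n / H$ is the cutoff point. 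The key point is that along a typical trajectory, the visited vertices $x_0, x_1, \dots$ behave (after the first step or two, which don't contribute to the log-scale) like \iid samples from $\muin$, so that $\sum_{i=1}^{t-1} \log(D^+_{x_i} \vee 1)$ is a sum of $t$ nearly-independent terms each with mean $H$ and variance $\sigma^2$. A central limit theorem (Lindeberg--Feller, using that the summands are bounded by $\log \maxdeg = O(\log\log n)$ and the non-degeneracy assumption \eqref{non-degenerate-variance} to ensure the Lindeberg ratio vanishes) then gives that $-\log\mass(X_{[0,t]})$ is approximately Gaussian with mean $tH$ and standard deviation $\sigma\sqrt{t}$. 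Evaluating at $t = \tcrit = \tent + \lambda \textbf{\textup{w}}_n + o(\textbf{\textup{w}}_n)$, one has $tH - \log n = \lambda \sigma \sqrt{\tent}\,(1+o(1)) = \lambda \cdot \sigma\sqrt{t}\,(1+o(1))$ (using $\textbf{\textup{w}}_n = \frac{\sigma}{H}\sqrt{\tent}$ and $t = \tent(1+o(1))$), so the probability that $-\log\mass \le \log n$ converges to $\frac{1}{\sqrt{2\pi}}\int_{-\infty}^{\lambda} e^{-u^2/2}\,du$, and the TV distance — being $1$ minus (roughly) this probability — converges to the Gaussian tail $\frac{1}{\sqrt{2\pi}}\int_\lambda^{+\infty} e^{-u^2/2}\,du$.

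Concretely I would proceed in the following steps. First, establish the \emph{upper bound} on the TV distance via an $L^2$ / second-moment argument: bound $\|P^t(x,\cdot) - \pi\|_{\textup{TV}}$ by the probability (over the walk) that the trajectory is ``atypical'' plus an error term, where ``typical'' means the running log-weight stays controlled; this is where one uses that, restricted to the bulk of $\muin$-mass, the quenched transition probabilities to any fixed vertex are uniformly small (the \emph{widespread} property \eqref{mumax}), so that the walk spreads and the contribution of collisions / short cycles is negligible — here the concentration results on the random degrees $D^\pm_x$ (alluded to before Proposition \ref{prop-entropy}) are essential to replace the random in-degree distribution by the deterministic $\muin$ with error $o(1)$ uniformly. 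Second, establish the matching \emph{lower bound} by exhibiting a distinguishing statistic: the set of vertices $y$ with $\mass$ of the canonical path reaching them exceeding a threshold carries most of $\pi$-mass but little of $P^t(x,\cdot)$-mass (or vice versa, depending on sign of $\lambda$), and a first-moment/Markov estimate shows these two measures are separated by exactly the Gaussian weight. Third, carry out the CLT for $\sum_{i} \log(D^+_{X_i}\vee 1)$ along the walk; I would do this by an annealed computation (averaging over the graph), controlling the dependence between the degree $D^+_{X_i}$ and the walk's choice of $X_{i+1}$ via a coupling with a ``fresh'' walk on an independent copy of the relevant edges, then transferring back to the quenched setting using concentration. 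Fourth, check uniformity in the starting vertex $x \in [n]$: since after one step the walk's position already has distribution close to $\muin$ (the out-neighborhood of any $x$ is, by \eqref{eq1}, a set of $\Theta(\log n)$ vertices roughly sampled from $\muin$), the $\max_{x}$ costs nothing beyond a union bound over $n$ vertices against exponentially small error probabilities.

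The main obstacle, as in the configuration-model papers, will be the \emph{sharp} control of the log-weight process with the \emph{exact} Gaussian constant, and in particular justifying that the $o(\textbf{\textup{w}}_n)$ slack in $\tcrit$ and the various approximation errors (replacing $\pi$ by $\muin$, replacing the dependent degree-sequence along the path by \iid draws, the $D^+_V \vee 1$ truncation) are all genuinely $o(\sigma\sqrt{\tent})$ and hence invisible on the window scale. The non-degeneracy condition \eqref{non-degenerate-variance} is precisely calibrated so that the Lindeberg condition holds: the summands are bounded by $\log\maxdeg = (1+o(1))\log\log n$ while the total variance is $t\sigma^2 \asymp \tent \sigma^2$, and one needs $(\log\log n)^2 / (\tent\,\sigma^2) = (\log\log n)^3/(\log n\,\sigma^2) \to 0$, together with a polynomial-rate Berry--Esseen refinement to absorb the $o(1)$ errors — this forces the slightly stronger $\delta$-dependent lower bound on $\sigma^2$ rather than the bare $(\log\log n)^3/\log n$. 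A secondary technical difficulty is that, unlike the configuration model, here both the degrees and the in-degree distribution are random, so every estimate that was deterministic in \cite{BCS18} must now be shown to hold $\p$-a.s.\ (or w.h.p.) with errors uniform over the at most $n$ relevant vertices; the tools for this are standard Chernoff/Bernstein bounds for sums of independent Bernoullis with parameters $p_{xy} = O(n^{-1/2-\eta/7})$, but keeping the error terms uniformly $o(1)$ on the window scale requires care.
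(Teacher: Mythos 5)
Your proposal tracks the paper's strategy closely: reduce the TV distance at time $t_\lambda$ to the quenched mass probability $\cQ_{x,t_\lambda}(\theta)$ via an upper bound (the nice-path decomposition and Bernstein concentration of Proposition~\ref{drop-positive-part}) and a matching lower bound (inequality~\eqref{ineq} together with the $L^2$ control of $\pi$ in Proposition~\ref{prop_piquadro}), and then evaluate $\cQ_{x,t_\lambda}(\theta)$ by coupling the walk to an i.i.d.\ $\muin$-sample and applying a Lindeberg--Feller CLT to $\sum_k \log(D^+_k\vee 1)$ (Theorem~\ref{CLT}). One small imprecision: the $\delta$-dependent non-degeneracy condition \eqref{non-degenerate-variance} is not driven by a Berry--Esseen rate but is exactly what makes the Lyapunov ratio $\sum_k\E[|L_k-\entropy|^{2+\delta}]/\var(S_{t_\lambda})^{1+\delta/2}$ vanish, and the preliminary moves to land in $V_\eps$ consume $\ell=3\log\log n$ steps (Lemma~\ref{lemmaveps}), producing an $O((\log\log n)^2)$ perturbation of $\log\theta$ that is absorbed by the window width $\sigma\sqrt{t_\lambda}$ --- not ``one step'' as you suggest.
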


\begin{remark}
Notice that the statements of Theorems~\ref{thm_main} and \ref{thm_window} can be easily extended to Chung--Lu digraphs 
with random sequences of weights $(W^+_1,\dots,W^+_n)$ and  $(W^-_1,\dots,W^-_n)$ which satisfy \emph{a.s.} the constraints \eqref{eq1} and \eqref{assumption}.\end{remark}

\section{Proof Outline and main ingredients} \label{outline}

\subsection{{General strategy}}

A main hurdle in the analysis of the mixing time of simple random walks on  digraphs is the lack of an explicit formula
for the stationary measure $\pi$. To cope with that, we will  introduce an explicit probability measure $\widetilde{\pi}$ 
that well approximates $\pi$ itself. 

Using this idea, and looking first at an upper bound on the mixing time,
by the triangle inequality we can write
	\begin{equation} \label{eq3}
		\|P^{t}(x,\cdot)-\pi\|_{\textup{TV}} 
		\le \|P^{t}(x,\cdot)-\widetilde{\pi}\|_{\textup{TV}}+\|\widetilde{\pi}-\pi\|_{\textup{TV}},\quad  \forall x \in [n]\,.
	\end{equation}
Note that if the first term in the \rhs is $o_{\p}(1)$ uniformly in $x \in [n]$, 
then the same must hold for the second term since
\begin{equation} \label{triangle-ineq}
\|\widetilde{\pi}-\pi\|_{\textup{TV}}= \|\widetilde{\pi}-\pi P^{t}\|_{\textup{TV}} 
=\sum_{x\in[n]}\pi(x)\|P^{t}(x,\cdot)-\widetilde{\pi}\|_{\textup{TV}}\,. 
\end{equation}
 
This is what we will prove whenever $t\ge(1+\beta)\tent$, taking  $\widetilde{\pi}:=\muin P^{h_\eps}$, with $\eps>0$ and
\begin{equation} \label{h_eps}
	h_\eps:= \frac{\eps \log n}{20 \entropy}\,.
\end{equation}
More precisely, we will prove the following slightly weaker condition 
\begin{equation}\label{meta}
\|P^{t}(x,\cdot)-\widetilde{\pi}\|_{\textup{TV}}=o_{\p}(1) \,,\quad \forall x\in\veps \,,
\end{equation}
where $\veps$ is a subset of $[n]$  whose vertices have a locally tree-like out-neighbourhood. 
This result will be sufficient to derive a proper upper bound on the mixing time as stated in \eqref{main2} of Theorem \ref{thm_main}.

As a further main tool to obtain \eqref{meta}, which also enters in the proof of 
the lower bound on the mixing time, 
we will introduce a suitable set of $t$-length paths, called \emph{nice paths}, 
that will be shown to be typical trajectories of the simple random walk.
Taking advantage of their properties, we will prove that, for any $\delta >0$,
	\begin{equation} 
	\begin{split}
		\|P^{t}(x,\cdot)-\widetilde{\pi}\|_{\textup{TV}}\le
		 \cQ_{x,t}\Big(\frac{1}{n\log^3n}\Big)+3\delta \,,
	\end{split}
\end{equation}
where for $x\in [n]$ and $\theta \in(0,1)$, $\cQ_{x,t}(\theta)$ is the quenched probability
that the mass of a path of length $t$ selected by a random walk with initial point $x$ is bigger than $\theta$.
Formally
	\begin{equation} \label{quenched-theta} 
		\cQ_{x,t}(\theta):=\quench_x(\mass(X_0,X_1,\dots,X_t)>\theta),
	\end{equation}
	where $\quench_x$ is the quenched law of a random walk starting at $x$ as in Section \ref{SRW},
	 and $\mass(\cdot)$ is the mass of a path as given in \eqref{mass}.

A similar approach can be implemented to obtain a lower bound on the total variation distance
as stated in \eqref{main1}. 
In particular,  for  $t=(1-\beta)\log n$ and $\theta= \log^{a}n/n$ (with a suitable $a\in\mathbb{N}$), 
it will lead to the inequality 
	\begin{equation} \label{lower-bound}
			\cQ_{x,t}(\theta)\le \|\widetilde{\pi}-P^t(x,\cdot)\|_{\textup{TV}}+ o_{\p}(1)\,.
	\end{equation}
	The function $\cQ_{x,t}(\theta)$  is thus one of the main characters of our analysis, 
	and it will carry a very powerful limit result: 
	in Theorem~\ref{thm_LLN} we will observe that according to the choices  $(t,\theta)$, 
	it may vanish or saturate to $1$. 
	This dichotomy will actually conclude the proof of the cutoff regime (Theorem \ref{thm_main}), and provide the main strategy for the proof of cutoff profile (Theorem \ref{thm_window}).

We would like to emphasize again that the overall strategy of our proofs follows the entropic method developed by Bordenave, Caputo, and Salez in \cite{BCS18, BCS19} for analyzing random walks on sparse directed configuration models. While we draw on these ideas, our implementation occurs in a quite different setting, necessitating significant modifications.
In the case of directed configuration models, the analysis often relies on combinatorial computations, which are feasible due to the deterministic nature of in- and out-degrees. However, this approach is not applicable to the Chung–Lu setting, where the in- and out-degrees are themselves random. Instead, the Chung–Lu model benefits from the independence of edges, a property we crucially exploit in our analysis, along with appropriate concentration inequalities for the in- and out-degrees.

Finally,  note that our results are consistent with Theorem 3 in \cite{ls10}, which is set in the context of undirected regular random graphs in a weakly sparse setting. Although we are not aware of analogous results for the undirected Chung–Lu model in this regime, we believe that similar conclusions can be drawn for a broad class of undirected random graphs. 
However, in this setting, the speed of the random walk enters the game and needs to be properly
analyzed (see e.g. \cite{BLPS} for the study of sparse undirected graphs).   
It is also worth mentioning the analysis in \cite{FR}, where the authors characterize the mixing time of random walks on Erdős–Rényi graphs with an average degree up to the order of $\sqrt{\log n}$, which is slightly below the assumptions of the present study.

\subsection{{Typical paths and tree-like neighbourhoods}}\label{sec_trees}
We explain here the properties that a path of length $t$ has to satisfy in order to be called \emph{nice}.
\begin{de}[Nice path] \label{def}
	Let $\gamma=\frac{\eps}{80}$, $\eps \in (0,1)$, $h_\eps$ as in \eqref{h_eps}, and
	\begin{equation}
		s:=(1-\gamma)\tent, \quad \quad t:=s+h_\eps+1=(1+3\gamma)\tent+1.
	\end{equation}
We say that a path $\pat=(x,x_1,x_2,\dots,x_{t-1},y)$ of length $t$ from $x$ to $y$ is \textit{nice} if
	\begin{enumerate} 
		\item[(i)] the entire path is such that $\mass(\pat)\le \frac{1}{n \log^3 n}$;
		\item[(ii)] the first $s$ steps are contained in certain tree $\mathcal{T}_x(s)$, defined below;
		\item[(iii)] the last $h_\eps$ steps form the unique path in $G$ of length at most $h_\eps$ to $y$; 
		\item[(iv)] it holds $P(x_s,x_{s+1})=1/D^+_{x_s}\ge \frac 1{C\log n}$, for some constant $C>0$.
	\end{enumerate}
\end{de}
\begin{remark}\label{rem-gen-tree}
Definition \ref{def} and the consequent machinery can be extended to times $t=t_\lambda$, lying in the critical window of Theorem \ref{thm_window}. 
In that case we set $s=t_\lambda- h_\eps$.
\end{remark}

To formalize the above properties, it remains to define the tree $\mathcal{T}_x(s)$.

\subsubsection{\textbf{Construction of the tree $\mathcal{T}_x(s)$}} \label{tree-constr}  
For a given realization of the graph $G$, a fixed root node $x \in [n]$ and a time $s \in \N$, 
we construct with an iterative procedure two sequences $(\mathcal{G}^\ell)_{\ell \ge0}$ and $(\mathcal{T}^\ell)_{\ell\ge0}$ such that, for every $\ell\ge 0$, $\mathcal{G}^\ell$ is a subgraph of $G$ 
with $\ell$ edges, while $\mathcal{T}^\ell$ is a spanning tree of $\mathcal{G}^\ell$. 
The criterion adopted is similar to the one in \cite[Sect.~3.2]{CCPQ}. \\

Set $\overh:=(1+\gamma)\entropy$, where $\gamma=\frac{\eps}{80}$ as in Definition \ref{def}. 
To initialize,  let $\mathcal{G}^0=\mathcal{T}^0:=\{x\}$. 
Then, for $\ell \ge 1$:
\begin{enumerate}
\item Let $\mathcal{E}^{\ell}$ be the set of edges with tails belonging 
	to $\mathcal{G}^{\ell-1}$, and  which have not been yet visited by the first $\ell -1$ 
	iterations of the algorithm. For an edge $e\in\mathcal{E}^{\ell}$,
	 define the cumulative mass
\begin{equation}\label{mass-hat}
\hat{\mass}(e):=\mass(\pat_{x,v_e^-})\frac{1}{D^+_{v_e^-}}\,,
\end{equation}%
	where $v_e^-$ is the tail of $e$, and $\pat_{x,v_e^-}$ denotes the unique path 
	in $\mathcal{T}^{\ell-1}$ from $x$ to $v_e^-$. 
	In particular, $\hat{\mass}(e)$ corresponds to the probability that the random walk follows 
	$\pat_{x,v_e^-}$ and then the edge $e$.
\item Choose $e_\ell\in \mathcal{E}^{\ell}$ such that:
	\begin{enumerate}
		\item[(a)] $v_{e_{\ell}}^-$ is at distance at most $s - 1$ from the root $x$,
		\item[(b)] $\hat{\mass}(e_{\ell})=\max_{e\in \mathcal{E}^{\ell}}\hat{\mass}(e) 
		\text{\quad and \quad}\hat{\mass}(e_{\ell})\ge e^{-\overh s}$.
	\end{enumerate} 
	 If such edge does not exist, stop the procedure and set $\kappa_x\equiv\kappa_x(s) = \ell-1$ ; 
	\item Generate $\mathcal{G}^\ell$ by adding $e_{\ell}$ to 
	$\mathcal{G}^{\ell-1}$ ;
	\item If step (2) does not break the tree structure of $\mathcal{T}^{\ell-1}$, 
	generate $\mathcal{T}^\ell$ 	by adding $e_{\ell}$ to  $\mathcal{T}^{\ell-1}$ and otherwise set 
	$\mathcal{T}^{\ell}=\mathcal{T}^{\ell-1}$.
\end{enumerate}
Note that $\kappa_x\equiv\kappa_x(s)$ is the last step of the iteration, and that it is finite
as the graph itself is finite.
We then set 
$\mathcal{G}_x(s):=\mathcal{G}^{\kappa_x}$ and 
$\mathcal{T}_x(s):=\mathcal{T}^{\kappa_x}$. We observe that $\mathcal{G}_x(s)$ is generated by all paths with mass at least $e^{-\overh s}$ and length at most $s$. \\ 

We will show that the properties of \emph{nice paths} are satisfied \whp for $s$ as in Definition \ref{def} and
 uniformly in all starting points $x\in\veps$,
where $\veps\subset[n]$ is the random set of vertices mentioned in Eq.~\eqref{meta} and defined as follows. 

For $h \in \mathbb{N}$ and $x \in [n]$, let us denote with  $\mathcal{B}^+_x(h)$ (resp.~$\mathcal{B}^-_x(h)$), 
the set of vertices $y \in [n]$ that are connected to $x$ by an oriented path of length at most $h$ 
and starting (resp.~ending) at point $x$. 
They will be called out- (resp.~in-)neighbourhood of $x$ of radius $h$.
Then we set
\begin{equation}  \label{veps}
	\veps :=\{ x \in [n] : \mathcal{B}^+_x(h_\eps) \text{ is a directed tree}\}.
\end{equation}
 As in \cite{ls10}, vertices $x \in \veps$ are named $h_\eps$-roots. 
 We will prove that $V_\eps$ is attractive in a sense that will be specified in Lemma~\ref{lemmaveps}.

\section{Tools}

\subsection{Annealed random walk}\label{annealed random walk}
 

In this section we will give an alternative construction of the annealed law of a random walk.
We will actually generalize this object to the joint annealed law of $K$ independent random walks defined on the same random graph.
This will be used in the forthcoming sections to compute the $K$-th moment of certain quenched statistics.

Let $K \in \mathbb{N}$. Given an initial distribution $\mu$ and a time $T$, we define iteratively the non-Markovian process  
$(X^{(k)})_{k \in \{1,\dots,K\}}$, where $X^{(k)}=(X_t^{(k)})_{0\le t\le T}$ 
is a random walk of length $T$ whose evolution is, for every $k \ge 2$, conditioned to the previous $k-1$ walks.
Formally, every random walk $X^{(k)}$ is defined by the following procedure:
\begin{enumerate}
	\item[(1)] Set $X_0^{(k)} \sim \mu$;
\end{enumerate}
Then for all $t \in \{1,\dots,T\}$:
\begin{enumerate}
	\item[(2)] 
	\begin{enumerate}
		\item[$\bullet$] If $X_{t-1}^{(k)}$ was never visited before by the previous walks or for $s\le t-1$, 
		generate its out-neighbourhood ${B}_{X_{t-1}^{(k)}}:=\mathcal{B}^+_{X_{t-1}^{(k)}}(1)$, 
		according to the probability $\p$, and select a vertex $v$ uniformly at random on ${B}_{X_{t-1}^{(k)}}$ ;
		\item[$\bullet$] If $X_{t-1}^{(k)}$ has been already visited, extract $v$ uniformly at random 
		from the previously generated out-neighbourhood of ${X_{t-1}^{(k)}}$;
	\end{enumerate}
	\item[(3)]  Set $X_t^{(k)}=v$.
\end{enumerate}
The key point of the above construction is that the law of $(X^{(k)})_{k \in \{1,\dots,K\}}$ corresponds 
to the annealed joint law $\p_{\mu}^{\textup{an},K}$ of a system of $K$ independent random walks $(X^{G,k})_{k \in \{1,\dots,K\}}$
(we recall that $G$ denotes a realization of the graph).
Indeed, given a measurable set of trajectories $A\subseteq [n]^{T\times K}$, we have
\begin{align*}
	\p_{\mu}^{\textup{an},K}&((X^{G,1},\dots,X^{G,K})\in A)=\E\big[\quench^{K}_\mu((X^{G,1},\dots,X^{G,K})\in A)\big]=\\ \nonumber =&\E\bigg[\sum_{\{x^{j}_t\}_{t,j}\in [n]^{T\times K}\cap A} \prod_{j=1}^K \mu(x^{j}_0) \prod_{t=0}^{T-1} P(x^{j}_t,x^{j}_{t+1})\bigg]=\\ \nonumber
	=&\E\bigg[\sum_{\{x^{j}_t\}_{t,j}\in [n]^{T\times K}\cap A} \prod_{j=1}^K \mu(x^{j}_0) \prod_{t=0}^{T-1} \E\left[P(x^{j}_t,x^{j}_{t+1})\big| ({B}_{x_r^j})_{r<t} \,,\, ({B}_{x_r^\ell})_{r\le T, \ell<j}\right]\bigg]\,,
	\end{align*}
which characterizes the law of $(X^{(k)})_{k \in \{1,\dots,K\}}$. 

\begin{remark}
Notice that the annealed random walk has an applied interest: its defining algorithm constructs samples of independent random walks
moving on a common structure. Understanding their self-repetition properties could provide information on the 
geometry of the graph, which is very important for statistical inference purposes.
\end{remark}
For a single random walk $X=(X_t)_{t\in\mathbb{N}}$ and any time $s\in\N$, 
we introduce the event that the vertex $X_s$ was never visited before the step $s$, formally written as
\begin{equation}\label{event_L}
\mathcal{L}_{s}=\{X_s\neq X_u ,\, \forall  u \in\{0,\ldots, s-1\}\}\,,
\end{equation}
where for $s=0$, the event $\mathcal{L}_{0}$ should be understood as the whole sample space. 
Using this notation, we are going to prove a result which highlights the role of the measure $\muin$, defined in \eqref{muin}, along the dynamics. Before giving the statement, we recall that $\mumax=\max_{x \in[n]}\muin(x)=O(n^{-\frac12-\frac\eta6})$, as observed in \eqref{mumax}.
\begin{lemma} \label{lemmaiid}
For every initial distribution $\mu$ and any positive $s=O\left(n^{1/2}\right)$, it holds
	\begin{equation}
		\pmuan(X_s=z , \mathcal{L}_{s-1})=\muin(z)\Big[1+O\Big(\frac{1}{\sqrt[3]{\log n}}\Big)\Big].
	\end{equation}
\end{lemma}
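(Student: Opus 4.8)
The plan is to use the annealed random walk construction from Subsection \ref{annealed random walk} with $K=1$, which reveals the out-neighbourhoods one at a time as the walk needs them. On the event $\mathcal{A}_s$ the walk visits $s$ distinct vertices, so each step of the walk queries a \emph{fresh} out-neighbourhood: for $t=1,\dots,s$ we generate $B_{X_{t-1}}$ according to $\p$ and then pick $X_t$ uniformly from it. The key observation is that, conditionally on $X_{t-1}=v$ and on the event that $B_v$ has not been queried before, the probability of stepping from $v$ to a given $w$ is
\begin{equation}
	\E\!\left[\frac{\one_{v\to w}}{D_v^+\vee 1}\right]
	= \sum_{d\ge 1} \frac{p_{vw}}{d}\,\PP(D_v^+ = d \mid v\to w)\,\cdot(\dots),
\end{equation}
and I would show this is $p_{vw}/\E[D_v^+]\cdot(1+o(1))$ uniformly in $v$. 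Indeed $D_v^+$ is a sum of independent Bernoulli's with $\E[D_v^+]=\Theta(\log n)\to\infty$ and $p_{\max}=o(n^{-1/2})$ so removing or conditioning on the single edge $v\to w$ is a negligible perturbation; by a standard concentration estimate (Chernoff) $D_v^+$ is within $(1\pm o(1))\E[D_v^+]$ except on an event of probability $n^{-\omega(1)}$, and on that bad event the contribution to the transition probability is at most $p_{\max}=o(n^{-1/2})$, which is negligible. Hence
\begin{equation} \label{eq:onestep}
	\E\!\left[\frac{\one_{v\to w}}{D_v^+\vee 1}\right]
	= \frac{p_{vw}}{\E[D_v^+]}(1+o(1))
	= \frac{w_v^+ w_w^- \frac{\log n}{n}}{w_v^+\,\w\frac{\log n}{n}(1+o(1))}(1+o(1))
	= \frac{w_w^-}{\w}(1+o(1)) = \muin(w)(1+o(1)),
\end{equation}
using \eqref{media_grado} and the fact that $\sum_{y}w_v^+w_y^-\frac{\log n}{n} = w_v^+\w\frac{\log n}{n}(1+o(1))$ since omitting the term $y=v$ changes the sum by $O(\log n/n)$.

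The crucial point of \eqref{eq:onestep} is that the one-step transition probability, \emph{from a fresh vertex}, does not depend on the current position $v$ (up to $1+o(1)$) and equals $\muin(w)$. Therefore, writing out the annealed probability as a sum over trajectories $(x_0=\mu\text{-distributed},x_1,\dots,x_s=z)$ with all $x_i$ distinct and using that each out-neighbourhood along the path is queried for the first time (this is exactly where $\mathcal{A}_s$ enters, together with $s=o((\mumax)^{-1})$ so the walk cannot have stepped into a previously generated neighbourhood), I would obtain
\begin{equation}
	\pmuan(X_s=z,\mathcal{A}_s)
	= \sum_{\substack{x_0,\dots,x_{s-1}\ \text{distinct},\\ x_s = z}} \mu(x_0)\prod_{i=0}^{s-1}\Big(\muin(x_{i+1})(1+o(1))\Big)
	= \muin(z)\,(1+o(1))^{s}\cdot\PP\big(\text{distinctness}\big)(1+o(1)).
\end{equation}
The factor $(1+o(1))^{s}$ is where the hypothesis $s=o((\mumax)^{-1})$ is used quantitatively: the per-step error is of relative size $O(s\,\mumax)$ (this is the cost of the no-self-intersection constraint, since the total mass that a fresh draw from $\muin$ puts on the $\le s$ already-visited vertices is at most $s\,\mumax$) plus the $o(1)$ from \eqref{eq:onestep}, and $s\,\mumax = o(1)$, so the whole product is $1+o(1)$. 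Summing the "intermediate" coordinates $x_1,\dots,x_{s-1}$ freely (up to the distinctness correction) collapses the product telescopically to $\muin(z)$, because $\muin$ is a probability measure and $\sum_{x_{i+1}}\muin(x_{i+1})=1$; the only surviving dependence is on the endpoint $z$.

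The main obstacle I expect is the bookkeeping of the distinctness/no-reuse constraint: one must argue carefully that on $\mathcal{A}_s$ the annealed construction genuinely queries $s$ independent fresh out-neighbourhoods, and simultaneously control the combinatorial sum over distinct tuples so that the "missing mass'' on already-visited vertices contributes only a $1+o(1)$ factor rather than accumulating. This is precisely handled by a two-sided bound: the upper bound drops the distinctness constraint (giving $\le \muin(z)(1+o(1))$ directly from iterating \eqref{eq:onestep}), while the lower bound subtracts, at each step, the probability $\le i\,\mumax \le s\,\mumax = o(1)$ of landing on one of the $\le i$ previously visited vertices, yielding $\ge \muin(z)(1-o(1))$. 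A secondary technical point is making the error in \eqref{eq:onestep} uniform over all $v,w\in[n]$, which follows from the uniform bounds $M_0\le w_v^+\le M_1$ in \eqref{eq1} and the uniform Chernoff bound on $D_v^+$; this is routine given the assumptions.
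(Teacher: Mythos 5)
Your proposal is correct and follows essentially the same route as the paper: both decompose $\pmuan(X_s=z,\mathcal{A}_s)$ over self-avoiding trajectories, reduce each step to the one-edge expectation $\E[\one_{\{v\to w\}}/D_v^+]=p_{vw}\,\E[1/D_v^+\mid v\to w]=\muin(w)(1+o(1))$ via concentration of the out-degrees (the paper's Remark \ref{inverse-remark}), and then lift the distinctness constraint using the bound $1-s\mumax\le\sum_{z_i\notin\{z_0,\dots,z_{i-1}\}}\muin(z_{i+1})\le 1$ so that the telescoping product collapses to $\muin(z)$. The only difference is presentational (you phrase the factorization through the sequential annealed construction, the paper factorizes the expectation of the product directly using edge-independence along a self-avoiding path), and your error bookkeeping, including its slight looseness in compounding the per-step $(1+o(1))$ factors, mirrors the paper's.
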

\begin{proof}
If $s>1$, and setting $z=z_s \in [n]$, we can write 
	\begin{equation} \label{prima}
		\begin{split}
			\pmuan(X_s=z , \mathcal{L}_{s-1})&=
			\sum_{\substack{z_0, \dots, z_{s-1} \in [n]\\{z_{s-1} \notin \{z_0,\dots,z_{s-2}\}}} } \mu(z_0)
			\E\left[\prod_{i=0}^{s-1}\frac{\one_{\{z_i \to z_{i+1}\}}}{D^+_{z_i}}\right]
			\\&
			= 
			\sum_{\substack{z_0, \dots, z_{s-1} \in [n]\\{z_{s-1} \notin \{z_0,\dots,z_{s-2}\}}} } \mu(z_0)
			\E\left[\prod_{i=0}^{s-2}\frac{\one_{\{z_i \to z_{i+1}\}}}{D^+_{z_i}}\right]\E\left[\frac{\one_{\{z_{s-1} \to z\}}}{D^+_{z_{s-1}}}\right],
		\end{split}
	\end{equation}
Where we used that $\one_{\{z_{s-1}\to z_s\}}$ is independent of the other indicator functions, by definition of $\mathcal{L}_{s-1}$.
From the concentration results on the out-degree $D_x^+$ that will be shown in Subsection \ref{sec_entropy}, 
the conditional average appearing in the last display is given, up to lower order terms, by 
\mbox{$(\E[D^+_{z_{s-1}}])^{-1}=(\w w_{z_{s-1}}^{+}\log n/n)^{-1}(1+O(1/\sqrt[3]{\log n}))$} (see Remark \ref{inverse-remark}). 
Inserting  this value in \eqref{prima}, using that $p_{z_i z_{i+1}}=w_{z_i}^+w_{z_{i+1}}^- \log n/n$, 
and from the explicit form of $\muin$, we get
	 \begin{equation}\label{uff}
	 	\begin{split}
			 \sum_{\substack{z_0, \dots, z_{s-1} \in [n]\\{z_{s-1} \notin \{z_0,\dots,z_{s-2}\}}} } \!\!\!\!\!\!\!\!\!\!\!\!
			 &\mu(z_0)\E\!\left[\prod_{i=0}^{s-2}\frac{\one_{\{z_i \to z_{i+1}\}}}{D^+_{z_i}}\right]\!\muin(z)\Big[1+O\Big(\frac{1}{\sqrt[3]{\log n}}\Big)\Big] \!\!=
 			 \pmuan(\mathcal{L}_{s-1})\muin(z)\Big[1+O\Big(\frac{1}{\sqrt[3]{\log n}}\Big)\Big].
			\end{split}
	 \end{equation}
We now observe that, for every $i \le s-1$, thanks to \eqref{maxp_xy} and our hypothesis on $s$, 
	\begin{equation}
	 	1-o(n^{-\frac \eta 7})=1-s p_{\text{max}} \le \pmuan(\mathcal{L}_{i}) \le 1.
	\end{equation}
Then the claimed statement holds for all $s>1$.

If $s=1$,  being $\mathcal{L}_{0}$ the whole sample space, we get more directly, by the same estimates, 
\begin{equation} \label{s=1}
	\pmuan(X_1=z )=
			\sum_{z_0\in [n]}  \mu(z_0)
			\E\left[\frac{\one_{\{z_0 \to z\}}}{D^+_{z_0}}\right]
		= \muin(z)\Big[1+O\Big(\frac{1}{\sqrt[3]{\log n}}\Big)\Big]\,. \qedhere
	\end{equation}
\end{proof}
\begin{remark}
	By Lemma \ref{lemmaiid} and Remark \ref{inverse-remark}, $\pmuan(X_s=z,\mathcal{L}_{s-1})=\pmuan(\mathcal{L}_{s-1})\muin(z)(1+\epsilon_z),$ where $0<\epsilon_z = O(1/\sqrt[3]{\log n})$ and $\pmuan(\mathcal{L}_{s-1}^c)=o(n^{-\frac \eta 7})$.
	As a consequence
	\begin{equation}
		\begin{split}1 =& \sum_{z \in [n]}\pmuan(X_s=z , \mathcal{L}_{s-1}) + \pmuan(\mathcal{L}_{s-1}^c)	\\
			=&\pmuan(\mathcal{L}_{s-1})\big(1+\sum_{z \in [n]}\muin(z)\epsilon_z\big)+ \pmuan(\mathcal{L}_{s-1}^c)
			= 1+\sum_{z \in [n]}\muin(z)\epsilon_z+ o(n^{-\frac \eta 7})\ ,
		\end{split}
	\end{equation}
	which leads to $\sum_{z \in [n]}\muin(z)\epsilon_z=o(n^{-\frac \eta 7})$. Then, we conclude,
	\begin{equation}\label{tv-cost}
		\begin{split}
			2 \|\pmuan(X_s=\cdot)-\muin\|_{\textup{TV}}
			&\le \sum_{z \in [n]}|\pmuan(X_s=z,\mathcal{L}_{s-1})-\muin(z)|+\pmuan(\mathcal{L}_{s-1}^c)\\
			&\le \sum_{z \in [n]}\muin(z)|\epsilon_z- o(n^{-\frac \eta 7})|+\pmuan(\mathcal{L}_{s-1}^c) \\
			& \le  \sum_{z \in [n]}\muin(z)(|\epsilon_z|+|o(n^{-\frac \eta 7})|)+\pmuan(\mathcal{L}_{s-1}^c) 
			=o(n^{-\frac \eta 7}).
		\end{split}
	\end{equation}
\end{remark}

Let us now define, for every $0<s<t$ the event $\mathcal{A}_{s,t}$ that the trajectory $(X_u)_{s\le u<t}$ has no self-intersections,
formally given by
\begin{equation}\label{event_A}
	\mathcal{A}_{s,t}\equiv \mathcal{A}_{s,t}^X:=\{X_{u}\neq X_v ,\, \forall  u\neq v \in\{s,\ldots, t-1\}\}\,.
\end{equation}
We set also $\mathcal{A}_t:=\mathcal{A}_{0,t}$. 

The next result shows that, if the initial measure $\mu$ is $\textup{Unif}([n])$,  then  the event $\mathcal{A}_T$ is indeed typical for a time $T=\log^2 n$, which is asymptotically much bigger than $\tent$. 
This will be crucial to prove the convergence result inside the cutoff window. 

\begin{lemma} \label{lemma-selfintersection}
Let $T:=\log^{2}n$.  If $\mu = \textup{Unif}([n])$, then there exists a constant $C_1>0$ such that
	$$\pmuan(\mathcal{A}_T^c)\le C_1\log^{4}n/n\,.$$
\end{lemma}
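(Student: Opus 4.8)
The plan is to bound the probability of a self-intersection within the first $T=\log^6 n$ steps of the annealed random walk started from the uniform distribution, via a union bound over all pairs of times $0\le u<v<T$ and a careful control of the one-step spreading of the walk. First I would write
\begin{equation*}
\pmuan(\mathcal{A}_T^c)\le \sum_{0\le u<v<T}\pmuan(X_u=X_v)\le \binom{T}{2}\max_{0\le u<v<T}\pmuan(X_u=X_v),
\end{equation*}
so that it suffices to show $\pmuan(X_u=X_v)=O(\log^{-1}n/n)$ uniformly in $u<v<T$; multiplying by $\binom{T}{2}=O(\log^{12}n)$ then yields the claimed bound $C_1\log^{12}n/n$.

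The heart of the matter is the uniform estimate $\pmuan(X_u=X_v)\le C\,\mumax\,(1+o(1))$, or more simply $\pmuan(X_u=X_v)=O(n^{-1/2-\eta/7})$, which is far stronger than what is needed but is the natural quantity to control. The key step is that, in the annealed construction, as long as no vertex has been revisited, each new vertex $X_{r+1}$ is (conditionally) a fresh vertex sampled with probabilities proportional to $w^-_\cdot\log n/n$, i.e.\ essentially according to $\muin$; concretely, conditioning on the out-neighbourhood generated at $X_r$ and using the concentration of $D^+_{X_r}$ around its mean $\Theta(\log n)$ (as in the proof of Lemma~\ref{lemmaiid}, via Remark~\ref{inverse-remark}), the law of $X_{r+1}$ given the past is $\muin(\cdot)(1+o(1))$ on the complement of the already-visited set, with an error of order at most $T\mumax=o(1)$ since $T=o((\mumax)^{-1})$ by \eqref{mumax}. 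Hence for any fixed target vertex $z$ and any time $v$, on the event $\mathcal{A}_v$ one gets $\pmuan(X_v=z,\mathcal{A}_v)\le \muin(z)(1+o(1))\le \mumax(1+o(1))$ uniformly in $z$; decomposing $\pmuan(X_u=X_v)=\sum_{z}\pmuan(X_u=z)\,\pmuan(X_v=z\mid X_u=z,\text{past})$ and bounding the conditional probability of hitting the \emph{prescribed} vertex $z$ at time $v$ again by $O(\mumax)$, while $\sum_z\pmuan(X_u=z)=1$, gives $\pmuan(X_u=X_v)=O(\mumax)=o(n^{-1/2})$. One should be slightly careful about the case $v=u+1$ (the walk cannot stay put, $P(x,x)=0$, so the probability is literally zero) and, more importantly, about the fact that a self-intersection at time $v$ may itself be the \emph{first} self-intersection, so one cannot freely assume $\mathcal{A}_v$ holds; the clean way is to bound $\pmuan(X_u=X_v)\le\pmuan(X_u=X_v,\mathcal{A}_{u})+\pmuan(\mathcal{A}_u^c)$ and set up an induction on $v$, or simply to note that hitting a \emph{specific} previously-visited vertex at each step has probability $O(\mumax)$ regardless of whether earlier self-intersections occurred, since in the annealed construction the conditional hitting probability of any fixed vertex is always $O(\mumax)$ (re-used out-neighbourhoods only make this smaller).

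The main obstacle I expect is the bookkeeping around the conditioning in the annealed construction: one must verify that re-using a previously generated out-neighbourhood (the second bullet of step (2) in Section~\ref{annealed random walk}) does not spoil the $O(\mumax)$ bound on the one-step hitting probability of a fixed vertex, and that the accumulated $(1+o(1))$ errors over $T=\log^6 n$ steps stay $o(1)$ — this is exactly where $T\mumax=o(1)$ is used. Once the uniform bound $\pmuan(X_u=X_v)=O(\mumax)=O(n^{-1/2-\eta/7})$ is in hand, the union bound gives $\pmuan(\mathcal{A}_T^c)=O(\log^{12}n\cdot n^{-1/2-\eta/7})$, which is $\le C_1\log^{12}n/n$ for $n$ large; alternatively, if one only wants the stated form verbatim, the cruder bound $\pmuan(X_u=X_v)\le 2\mumax= o(1/n)$ already suffices after multiplying by $\binom{T}{2}$. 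I would present the argument in the sharper form and then remark that the stated inequality follows a fortiori.
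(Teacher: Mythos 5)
Your high-level decomposition — a union bound over pairs of times $0\le u<v<T$ rather than the paper's decomposition via the first self-intersection time $\tau$ — is a legitimate alternative route, and the idea that in the annealed construction each step is approximately a fresh $\muin$-sample (Lemma~\ref{lemmaiid} and Remark~\ref{inverse-remark}) is exactly the right ingredient. However, there is a genuine quantitative gap in your proof: the bound $\pmuan(X_u=X_v)=O(\mumax)$ is \emph{not} $O(1/n)$, and in particular your claim that ``the cruder bound $\pmuan(X_u=X_v)\le 2\mumax=o(1/n)$ already suffices'' is false. From \eqref{mumax} one has $\mumax=O(n^{-\frac12-\frac\eta6})$ with $\eta\in(0,1)$, so $\frac12+\frac\eta6<1$ and hence $\mumax$ can be (and in general is, e.g.\ when a single $w_x^-$ is near its maximal allowed size $(M_2 n)^{1/(2+\eta)}$) \emph{polynomially larger} than $1/n$. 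Consequently $\binom{T}{2}\mumax = O\big(\log^{12}n\cdot n^{-\frac12-\frac\eta6}\big)\gg \log^{12}n/n$, and the argument does not close.

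The fix is precisely the step where you replace one factor of $\muin(z)$ by $\mumax$: you must not do this. Writing, as you do,
\begin{equation*}
\pmuan(X_u=X_v)\approx\sum_{z\in[n]}\pmuan(X_u=z)\,\pmuan(X_v=z\mid X_u=z,\text{past}),
\end{equation*}
you should keep \emph{both} factors as $\muin(z)(1+o(1))$ (for $u\ge1$; for $u=0$ the first factor is $1/n$, which is even better), which after summing over $z$ gives $\sum_z\muin(z)^2(1+o(1))$. The crucial input is then assumption \eqref{assumption}: as in the paper's display \eqref{square}, $\sum_{z\in[n]}\muin(z)^2\le M_2 n/\w^2\le C_1/n$ since $\w=\Theta(n)$. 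This is the source of the $1/n$, not the $L^\infty$ bound $\mumax$. The paper's proof does exactly this via the first self-intersection time; your pairwise union bound would also work if you retained the full sum $\sum_z\muin(z)^2$ and invoked \eqref{assumption} in the same way. As written, your proposal omits this key step.
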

\begin{proof}
Let $\tau$ be the first self-intersection time of $X$, given by
$$ \tau:=\min\{s>0\,:\,\exists u<s \mbox{ such that } X_s=X_u  \}\,,$$	
	 and write
	\begin{equation}\label{tau_split}
		\pmuan(\mathcal{A}_T^c)=\pmuan(\tau < T )= \sum_{t=1}^{T-1} \pmuan(\tau = t),
	\end{equation}
	where	
	\begin{equation} \label{3.12}
		\pmuan(\tau = t)= \sum_{z \in [n]} \Big( \pmuan(X_0=X_t=z,\tau=t)+\sum_{0<s<t}\pmuan(X_s=X_t=z,\tau=t)\Big).
	\end{equation}
	We estimate separately the two terms appearing in the above summation.
	
	The first term can be written as 
	\begin{equation}\label{ti0}
	\begin{split}
		\pmuan(X_0=X_t=z,\tau=t) &= 
		\pmuan(X_t=z,\tau=t|X_0=z) \cdot \pmuan(X_0=z) \\
		& = \p^{\textup{an}}_z (X_t=z,\tau=t)\mu(z) \le  \p^{\textup{an}}_z (X_t=z,\mathcal{L}_{t-1})\mu(z)\\ 
		&= \frac{1}{n}\muin(z) (1+o(1)),
	\end{split}
	\end{equation} 
where the last identity follows from Lemma \ref{lemmaiid} and using that  $\mu = \textup{Unif}([n])$.
Inserting this value in \eqref{3.12} and summing over $z$, we conclude that this term provides an overall 
contribution to $\pmuan(\tau = t)$ equal to $1/n +o(1/n)$. 
	
	Let us turn to the second term.
	For all $s< t\le T$, we introduce the event
	\begin{equation}\label{event_B}
\mathcal{B}_{s,t}\equiv \mathcal{B}_{s,t}^X:=\{X_{v}\neq X_u ,\, \forall  u\in\{0,\ldots, s-1\} 
\mbox{ and }v\in\{s,\ldots, t-1\}\}\,,
\end{equation}
corresponding to the event that the trajectory $(X_v)_{v\in[s,t)}$ does not intersect the trajectory
$(X_u)_{v\in[0,s)}$.
Note that, in this notation, $\mathcal{A}_t = \mathcal{A}_s\ \cap\mathcal{A}_{s,t}\cap \mathcal{B}_{s,t}$, 
and we can write
	\begin{equation} \label{eq3.15}
		\begin{split}
			 \pmuan(X_s=X_t=z,\tau=t) & \le \pmuan(X_s=X_t=z,\mathcal{A}_{t})\\
			 &= 
			\pmuan(X_s=X_t=z,\mathcal{A}_{s} \cap \mathcal{A}_{s,t}\cap \mathcal{B}_{s,t})  \\  
			& =
			\sum_{\substack{
					v \in ([n]\setminus {z})^{s}\\\text{self-avoiding} }} \pmuan(X_t=z,\mathcal{A}_{s,t}|
					(X_k)_{0\le k\le s}=	(v,z),\, \mathcal{B}_{s,t} )\\
					&\hspace{3cm}
				\times \pmuan((X_k)_{0\le k\le s  }=(v,z) ,\, \mathcal{B}_{s,t} ).
		\end{split} 
		\end{equation}%
Thanks to the conditioning, the first factor can be written as 
\mbox{$\widetilde{\p}^{\textup{an}}_z(X_{t-s}=z,\mathcal{A}_{t-s})$} where 
$\widetilde{\p}^{\textup{an}}_z(\cdot)=\widetilde{\E}[\quench_z(\cdot)]$ denotes the annealed law induced 
by a Chung--Lu probability measure $\widetilde{\p}$ on a graph with $n - s$ nodes. 
To the sake of readability we do not stress the dependence of $\widetilde{\p}$ on the vector $v \in ([n]\setminus {z})^s$. 
We conclude observing  that, thanks to Lemma \ref{lemmaiid},
   	\begin{equation}
			\begin{split}
				\widetilde{\p}^{\textup{an}}_z(X_{t-s}=z,\mathcal{A}_{t-s}) \le
				\widetilde{\p}^{\textup{an}}_z(X_{t-s}=z,\mathcal{L}_{t-s}) = \muin(z)(1+o(1)) 
			\end{split} 
	\end{equation}
Plugging this identity in \eqref{eq3.15}, summing over $v \in ([n]\setminus {z})^s$,
and applying once more Lemma \ref{lemmaiid}, we end up with 
	\begin{equation}\label{pezzo2}
		\begin{split}
		\pmuan(X_s=X_t=z,\tau=t) 
		&\le \muin(z) \pmuan(X_s =z,\, \mathcal{A}_{s}\cap \mathcal{B}_{s,t})\\
		& \le \muin(z) \pmuan(X_s =z,\, \mathcal{A}_{s}) \le \muin(z) \pmuan(X_s =z,\, \mathcal{L}_{s-1})\\
		&= \muin(z)^2(1+o(1))
		\end{split}
    \end{equation} 
Inserting this value in \eqref{3.12}, summing over $s<t$ and $z\in[n]$,
and noting that, by assumption \eqref{assumption},  there exists a finite constant $C_1$ such that
\begin{equation}\label{square}
\sum_{z \in [n]} \muin(z)^2 \le M_2n/\w^2\le \frac{C_1}{n}\,,
\end{equation}
we conclude that the contribution  to $\pmuan(\tau = t)$ of this second term is at most $C_1 \frac{T-1 }{n}$. 
The claimed statement follows including these estimates in \eqref{tau_split}.
 \end{proof}

 \begin{remark}
 Note that the bound of order $\log^{4}n/n$ is due to the specific choice of the time $T$.
 The result can be generally stated for any time $T$ which grows poly-logarithmically in $n$, 
 providing an estimate of order $O(T^2/n)$.  
 The requirement over the initial measure can be similarly weakened by replacing $\textup{Unif}([n])$ 
 with a measure $\mu$ sufficiently widespread over $[n]$, so that $\max_{x\in[n]}\mu(x)=O(T/n)$
 and the term in \eqref{ti0} can be properly controlled.  
 \end{remark}

\subsection{Properties of the random graph}

In this section we consider some non-trivial properties of the environment which are the ground floor 
to understand the typical behaviour of random walk paths. 
We will state two main results about the in- and out-neighbourhood of a given vertex, and provide the proof 
of Proposition \ref{prop-entropy} regarding the entropy asymptotics.

\subsubsection{\textbf{Concentration of out-degrees and entropy}}\label{sec_entropy}
Our first two results concern with the out-degree properties of the graph. 
They are straightforward consequences of the Chernoff bounds, which we provide below for the reader's convenience (see Prop. 2.21, \cite{vdH16}).\\

Let $X_i \sim \be(p_i)$, $i = 1,\dots,n$, be independent Bernoulli random variables 
of parameter $p_i \in (0,1)$ and let $X=\sum_{i =1}^n X_i$. Then, for every choice of $t>0$,
\begin{equation} \label{binomial}
\begin{split}
&\PP(X\ge \E[X]+t) \le \exp\left(-\frac{t^{2}}{2(\E[X]+ t/3)}\right)\,,\\
&
\PP(X\le \E[X]-t) \le \exp\left(-\frac{t^{2}}{2\E[X]}\right)\,.
\end{split}
\end{equation}

The above Chernoff bounds, applied to the random variables $(D^+_x)_{x \in [n]}$, yields the following bounds on $\maxdeg$ and $\mindeg$ (maximum and minimum out-degree).

\begin{lemma} \label{C}
	There exists $C>1$ such that the event $\event:=\left\{ \mindeg \ge 2\right\} 
	\cap\left\{ \maxdeg \le C \log n \right\}$ satisfies
	\begin{equation}
		\p(\event)=1-o(1).
		\label{event}
	\end{equation}
\end{lemma}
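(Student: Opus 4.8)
\textbf{Proof strategy for Lemma~\ref{C}.}
The plan is to apply Proposition~\ref{concentration-inequality} to each out-degree $D_x^+$ separately, then take a union bound over the $n$ vertices. Recall from \eqref{media_grado} and the consequences of \eqref{eq1} that $\E[D_x^+]=\Theta(\log n)$ uniformly in $x$; write $m_n:=\min_{x}\E[D_x^+]$ and $M_n:=\max_{x}\E[D_x^+]$, so that $c_0\log n\le m_n\le M_n\le c_1\log n$ for suitable constants $0<c_0\le c_1$ depending only on $M_0,M_1$ (and on the fact that the missing diagonal term is $O(1)$).

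For the upper tail, fix a constant $C$ with $C>c_1$ and apply \eqref{binomial-1} with $t=C\log n-\E[D_x^+]\ge (C-c_1)\log n$. Since $\E[D_x^+]+t/3\le C\log n$, the bound gives $\PP(D_x^+\ge C\log n)\le\exp\big(-(C-c_1)^2(\log n)^2/(2C\log n)\big)=\exp(-a_C\log n)=n^{-a_C}$ with $a_C=(C-c_1)^2/(2C)$. Choosing $C$ large enough that $a_C>1$ (e.g.\ any $C>2c_1$ works once $n$ is large, and one can always enlarge $C$), a union bound over $x\in[n]$ yields $\PP(\maxdeg>C\log n)\le n\cdot n^{-a_C}=o(1)$. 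For the lower tail, apply \eqref{binomial-2} with $t=\E[D_x^+]-2\ge m_n-2\ge c_0\log n-2$; then $\PP(D_x^+\le 2)\le\exp\big(-(m_n-2)^2/(2M_n)\big)\le\exp(-b\log n)=n^{-b}$ for some constant $b>1$ once $n$ is large, since the exponent is $\Theta((\log n)^2/\log n)=\Theta(\log n)$ with leading constant $c_0^2/(2c_1)$, which can be made $>1$ — and if it is not, one simply notes that enlarging the target from $\mindeg\ge 2$ is not allowed, but the constant $c_0^2/(2c_1)$ is already fixed by the model; in that case one instead observes that $(m_n-2)^2/(2M_n)\ge (1-o(1))c_0^2\log n/(2c_1)$ and, if this is $\le 1$, one replaces the crude bound by noting $\PP(D_x^+\le2)\le\PP(\mathrm{Bin}(n-1,p_{\min})\le 2)$ with $p_{\min}=\Theta(\log n/n)$, whose probability is $O((\log n)^2 e^{-\mu})$ with $\mu=(n-1)p_{\min}=\Theta(\log n)$, again polynomially small with an arbitrarily large exponent by choosing the comparison weight product $\ge\lambda>1$ large — but in fact under \eqref{eq1} we have $w_x^+w_y^-\ge M_0^{?}$ only if both weights are bounded below, which holds for $w^+$ but not necessarily for $w^-$; hence one uses $\E[D_x^+]=\sum_y w_x^+w_y^-\log n/n\ge M_0\w\log n/n - O(1)=M_0\log n(1+o(1))\cdot(\w/n)$, and since $\w=\Theta(n)$ this is $\ge c_0\log n$ with $c_0$ as large as the modeler's lower bound on $\w/n$ permits. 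Taking another union bound over $x$ then gives $\PP(\mindeg<2)\le n\cdot n^{-b}=o(1)$.

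Combining the two estimates, $\PP((\event)^c)\le\PP(\maxdeg>C\log n)+\PP(\mindeg<2)=o(1)$, which is \eqref{event}.

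\textbf{Main obstacle.} The only genuinely delicate point is checking that the constants in the exponents of \eqref{binomial-1} and \eqref{binomial-2} actually exceed $1$ (so that the union bound over $n$ vertices survives), and in particular that $\E[D_x^+]$ is not just $\Theta(\log n)$ but bounded below by a constant times $\log n$ with a usable constant — this is where one must carefully use $\w=\Theta(n)$ together with $w_x^+\ge M_0>1$, rather than just $w_x^\pm=\Theta(1)$. Once the lower bound $\E[D_x^+]\ge c_0\log n$ with a known $c_0$ is in hand and $C$ is taken sufficiently large, everything else is a routine computation; the constant $C$ is then fixed for the rest of the paper.
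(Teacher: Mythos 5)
Your upper-tail estimate for $\maxdeg$ matches the paper exactly: apply \eqref{binomial-1}, pick $C$ large enough, then union bound. The difficulty lies entirely in the lower tail, and the resolution you propose has a genuine gap. The Chernoff inequality \eqref{binomial-2} applied with $t=\E[D_x^+]-2$ gives $\PP(D_x^+\le 2)\le\exp\bigl(-(1+o(1))\,\E[D_x^+]/2\bigr)$. Using $\w/n\ge M_0$ and $w_x^+\ge M_0$ one does get $\E[D_x^+]\ge M_0^2\log n\,(1+o(1))$, but then the Chernoff bound yields only $n^{-M_0^2/2}$, and the hypothesis \eqref{eq1} gives $M_0>1$, not $M_0>\sqrt{2}$. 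So the exponent $M_0^2/2$ may be less than $1$, and the union bound over $n$ vertices fails. This loss of a factor $2$ is inherent to the variance-form Chernoff inequality when one goes all the way to the extreme lower tail; it is not repaired by sharper bookkeeping. Your fallback — dominating $D_x^+$ from below by $\bin(n-1,p_{\min})$ with $p_{\min}=\Theta(\log n/n)$ — would give the correct Poissonian decay, but as you yourself note it requires a uniform lower bound on $w_y^-$, which the assumptions do not supply (\eqref{eq1} only constrains $w^+$; \eqref{assumption} bounds $w^-$ from above in aggregate). You also cannot ``choose the comparison weight product large'': $M_0$ and $\w/n$ are fixed by the model. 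The last step of your proposal therefore circles back to Chernoff with exponent $M_0^2/2$, and the argument does not close.

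The paper sidesteps Chernoff entirely for the lower tail by computing $\PP(D_x^+<2)$ directly as the sum of the two point probabilities,
\begin{equation*}
\PP(D_x^+<2)=\prod_{y\neq x}(1-p_{xy})+\sum_{z\neq x}p_{xz}\prod_{y\neq x,z}(1-p_{xy})
\le e^{-\sum_{y\neq x}p_{xy}}+\sum_{z\neq x}p_{xz}\,e^{-\sum_{y\neq x,z}p_{xy}}\,,
\end{equation*}
which captures the true decay $e^{-\E[D_x^+]}$ rather than $e^{-\E[D_x^+]/2}$. Since $\sum_{y\neq x}p_{xy}=\E[D_x^+]\ge w_x^+(\w/n)\log n\,(1+o(1))$ and $\w/n\ge M_0>1$, this is $O(n^{-w_x^+}\log n)$; because $w_x^+\ge M_0>1$ uniformly, the union bound $\sum_x O(n^{-w_x^+}\log n)=O(n^{1-M_0}\log n)=o(1)$ goes through. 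This direct Poisson-style computation, which avoids the factor-$2$ loss in the exponent, is exactly the step your argument is missing.
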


\begin{proof}
	Fix a single vertex $x \in [n]$. It holds
	\begin{equation}
		\PP(D_x^+ < 2) = \prod_{y\neq x} (1-p_{xy})+ \sum_{z \neq x} p_{xz}\prod_{y \neq x,z}(1-p_{xy}), 
	\end{equation}
	and recalling that $\log(1-t)\le -t$ for every $|t|<1$,
	
	\begin{equation}
		\PP(D_x^+ < 2)\le e^{-\sum_{y\neq x} p_{xy}}+ \sum_{z \neq x} p_{xz}e^{-\sum_{y \neq x,z}p_{xy}}
		= O( n^{-w_x^+}\log n ).
	\end{equation}
	
	Since $w_x^+>1$ for every $x \in [n]$, by a union bound we get $\PP(\delta_x^+ < 2)=o(1)$. \\

	To bound below  $\maxdeg$, we apply the Chernoff bounds \eqref{binomial} to get
	\begin{equation}
		\PP(D_x^+ > C\log n)
		\le \exp\left( -\frac{(C\log n-\E[D_x^+])^2}{2\left(\E[D_x^+]+\frac{1}{3}(C \log n-\E[D_x^+] )\right)}  \right),
	\end{equation}
	and note that we can choose $C$ sufficiently large to obtain a uniform estimate in $x$, so that the \rhs is of order $n^{- \gamma}$, for any $\gamma >0$. 
Then, with a union bound on $x \in [n]$,
	\begin{equation}
		\PP(\maxdeg \le C\log n)=1-o(1).
		\qedhere
	\end{equation}
\end{proof}

\begin{lemma} \label{c} 
There exists a constant $c>0$, independent of $n$, such that, for every vertex $x \in [n]$,
	$$\PP(D_x^+ \le c\log n)=o(1)\,.$$
\end{lemma}
\begin{proof}
Applying the Chernoff bounds \eqref{binomial} with $X=D_x^+$ and $t:= \E[D^+_x]-c \log n>0$ 
	it holds
	\begin{equation} \label{c-bound}
		\PP(D_x^+ \le c\log n)
		\le \exp \left(- \frac{(\E[D_x^+]-c\log n)^2}{2\E[D_x^+]}\right).
	\end{equation}
	By assumption \eqref{eq1}, for every $x \in [n]$ it holds $\E[D_x^+]=\Theta(\log n)$, 
	with asymptotic constant uniformly bounded in $n$. 
Then, there exists \mbox{$c>0$}, independent of $n$, such that 
$$\frac{1}{\log n} \cdot\frac{(\E[D_x^+]-c\log n)^2}{2\E[D_x^+]}
	=\Theta(1)\, ,\quad 
		\forall x \in [n].$$
	This completes the proof.
\end{proof}
\begin{remark} 
In general, to perform a union bound in \eqref{c-bound} and prove that $\mindeg>c \log n$ 
w.h.p., it must hold, for $x \in [n]$,
$$\frac{1}{\log n} \cdot\frac{(\E[D_x^+]-c\log n)^2}{2\E[D_x^+]}= \alpha(x)(1+o(1)),$$
for a constant  $\alpha(x)$ such that $\alpha(x)>1$ uniformly in $x \in [n]$ and $n\in\mathbb{N}$.
This can happen only if, for large $n$ and for every $x \in [n]$, ${({\w w_x^+}/{n}-c)^2}>{2{\w w_x^+}/{n}}$. 
Since for every $n \in \N$, $c\in(0,{\w w_x^+}/{n})$, passing to the roots we derive the equivalent 
condition that $c < {\w w_x^+}/{n}-\sqrt{2{\w w_x^+}/{n}}$ for large $n$ and for every $x \in [n]$.

However, this condition is not always satisfied under our general hypotheses. 
For instance, on the \er graph with parameter $\lambda \log n/n$, where $1<\lambda<\sqrt{2}$, 
it holds that ${\w w_x^+}/{n}\equiv \lambda$, and the above condition is satisfied only if $c$ 
is such that $0<c<\lambda-\sqrt{2\lambda}<0$, yielding a contradiction. 
The above strategy is then insufficient to deal with this specific case.  
\end{remark}
\begin{remark} \label{inverse-remark}
The Chernoff bounds \eqref{binomial} provide a precise estimate on the average of the reciprocal of out-degrees. To see this, it is sufficient to plug $X=D^+_x$ and $t=m\E[D^+_x]$  into \eqref{binomial}.  
Since $\E[D^+_x]=\Theta(\log n)$, the choice $m=1/\sqrt[3]{\log n}$ implies 
	\begin{equation}\E\left[\frac{1}{D^+_x}\right]=\frac 1 {\E[D^+_x]}\Big[1+O\Big(\frac{1}{\sqrt[3]{\log n}}\Big)\Big].
	\end{equation}
\end{remark}

Notice that, thanks to Jensen's inequality, the multiplicative error term has to be greater than $1$. 
We conclude this subsection providing the proof of Proposition~\ref{prop-entropy} about the entropy $\entropy$. It is a straightforward application of the two previous lemmas.
\begin{proof}[Proof of Proposition~\ref{prop-entropy}]
From the definition of the entropy $\entropy$ given in \eqref{entropy}, we can conveniently rewrite
\begin{equation} \label{E[L]}
	\entropy=\sum_{x \in [n]}\muin(x) \sum_{i=2}^n \log i \p(D_x^+=i)\,.
\end{equation}
By Lemmas~\ref{C}-\ref{c}, for every fixed vertex \mbox{$x \in [n]$},
\begin{equation*}
	\p(D_x^+>C\log n)=o(1/n), \quad \quad \p(D_x^+ < c\log n)=o(1),
\end{equation*}
where $C>1$ and $c=c(x)>0$ uniformly in $n$. Hence
\begin{equation*}
	\log(c\log n) + o(1)	\le \sum_{i=2}^n \log i\p(D_x^+=i) \le \log(C\log n) + o( 1/n),
\end{equation*}
which together \eqref{E[L]}, implies that
${\entropy}={\log\log n} (1+o(1)).$

From the definition of the variance $\sigma^2$ given in \eqref{variance-sigma}, we can write
\begin{equation}
	\sigma^2 = \sum_{x \in [n]}\muin(x) \sum_{i=2}^n (\log i)^2 \p(D_x^+=i) - \entropy^2\,.
\end{equation}
Since for every $C \in (0+\infty)$ it holds 
$(\log(C \log n))^2=(\log\log n)^2 + 2 \log C \log\log n + \log^2 C$, 
from the previous displays, and inserting the derived estimate of $\entropy$, we conclude that 	
$\sigma^2 = O(\log\log n)$.
\end{proof}

The entropy $\entropy$ provides an average observable of the system. 
In the forthcoming sections it will be shown to be deeply connected with the dynamics of the random walk.
More precisely, we will deduce from Theorem \ref{thm_LLN} that the probability mass of a typical random walk path 
of length $t$ is  $e^{-\entropy t+O(\sqrt{\entropy t})}$.

\subsubsection{\textbf{Size of in-neighbourhoods}}
We now focus on the analysis of the in-neighbourhood properties of the graph, 
that will turn to be fundamental in understanding the spread of the random walk on the environment.

Recall that for $x\in[n]$ and $s\in\mathbb{N}$, $\mathcal{B}_x^+(s)$ and $\mathcal{B}_x^-(s)$
denote, respectively, the out- and in-neighbourhood of $x$ with depth $s$.
Following the general proof strategy traced in \cite{CCPQ}, we are going to show that, 
\whp and uniformly in $x$, the size of an in-neighbourhood of radius $\eps \tent/20$ is at most $n^{1/2+\eps}$. 
\begin{lemma}\label{size-in-ball}
	Let $h_\eps=\frac{\eps\log n}{20 \entropy}$  as in \eqref{h_eps}, and define the event
		\begin{equation} \label{S-}
			\mathcal{S}^-_\eps:=\{\forall x \in [n],|\mathcal{B}_x^-(h_\eps)|\le n^{1/2+\eps}\}.
		\end{equation}
	 Then $\p\left( \mathcal{S}^-_\eps\right) = 1-o(1)$.
		
		\end{lemma}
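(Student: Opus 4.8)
The plan is to control the growth of in-neighbourhoods by a first-moment (union bound) argument over directed paths, exploiting that $h_\eps$ is only a small multiple of $\log n/\log\log n$ and that each edge is present with probability $O(p_{\max})=o(n^{-1/2-\eta/7})$. First I would fix $x\in[n]$ and write $|\mathcal{B}_x^-(h_\eps)|\le 1+\sum_{k=1}^{h_\eps} N_k(x)$, where $N_k(x)$ counts directed paths of length exactly $k$ ending at $x$ with distinct vertices (self-avoiding suffices, since any vertex at distance $\le h_\eps$ is reached by some self-avoiding path of length $\le h_\eps$). Then $\E[N_k(x)]=\sum p_{y_0y_1}\cdots p_{y_{k-1}y_k}$ over self-avoiding $(y_0,\dots,y_{k-1},x)$. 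Using $p_{y_{i}y_{i+1}}=w^+_{y_i}w^-_{y_{i+1}}\frac{\log n}{n}$, the product telescopes in the weights: the interior weights pair up as $w^-_{y_i}w^+_{y_i}\le M_1\cdot(M_2n)^{1/2-\eta/6}$ by \eqref{eq1} and \eqref{maxa-}, so each intermediate vertex contributes a factor $O\!\big(n^{-1/2-\eta/6}\log n\big)$ after summing freely over it, while $w^-_x\le (M_2n)^{1/2-\eta/6}$ and $w^+_{y_0}\le M_1$ are harmless boundary terms. Hence $\E[N_k(x)]\le (\log n)\cdot\big(c\, n^{-1/2-\eta/6}\log n\big)^{k-1}\cdot(M_2 n)^{1/2-\eta/6}$ for a constant $c$, which is at most $n^{1/2-\eta/6}(\log n)^{k}$ times a term that is $o(1)^{k-1}$; in particular $\E[N_k(x)]\ll n^{1/2-\eta/8}$ for every $1\le k\le h_\eps$, where the bound $k\le h_\eps=O(\log n/\log\log n)$ ensures the polylogarithmic factor $(\log n)^k$ is subpolynomial, i.e.\ $(\log n)^{h_\eps}=e^{O(\log n\,\log\log n/\log\log n)}=n^{O(1)}$ — here one must be slightly careful and instead observe $(\log n)^{h_\eps}=n^{\eps/(20)\cdot\log\log n/\log\log n \cdot o(1)}$... let me restate this cleanly below.

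More precisely, I would note $\log\big((\log n)^{h_\eps}\big)=h_\eps\log\log n=\frac{\eps\log n}{20\,\entropy}\log\log n=\frac{\eps}{20}\log n\,(1+o(1))$ by Proposition \ref{prop-entropy}, so $(\log n)^{h_\eps}=n^{\eps/20+o(1)}$. Combining, $\E\big[|\mathcal{B}_x^-(h_\eps)|\big]\le 1+\sum_{k=1}^{h_\eps}\E[N_k(x)]\le h_\eps\cdot n^{1/2-\eta/8}\cdot n^{\eps/20+o(1)}\le n^{1/2-\eta/8+\eps/20+o(1)}$, which for $n$ large is $\le n^{1/2+\eps}/n^{\eta/8}$, say, provided $\eps/20<\eps$ (trivially true) — so a Markov inequality gives $\p\big(|\mathcal{B}_x^-(h_\eps)|>n^{1/2+\eps}\big)\le n^{-1/2-\eta/8+o(1)}\ll n^{-1}$. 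A union bound over the $n$ choices of $x$ then yields $\p(\mathcal{S}^-_\eps)=1-o(1)$, as claimed. The role of assumption \eqref{assumption} is exactly to make $w^-_x$ polynomially smaller than $n^{1/2}$ (via \eqref{maxa-}), so that each additional edge in a path genuinely shrinks the expected count rather than merely keeping it bounded.

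The main obstacle, and the point deserving care, is making the weight-summation bound uniform over path length and over the endpoint $x$ simultaneously: one has to check that the free summation $\sum_{y_i\in[n]}w^+_{y_i}w^-_{y_i}$ is $O(n)$ — which follows since $w^+_{y_i}\le M_1$ and $\sum_y w^-_y=\w=\Theta(n)$ — rather than invoking the cruder bound $n\cdot\max_y(w^+_yw^-_y)$, and this is what produces the clean factor $O(n^{-1}\cdot n\cdot\frac{\log n}{n}\cdot n^{1/2-\eta/6})$... i.e.\ one should organize the sum so that telescoping in $\w$ is used for the bulk and the extreme bound \eqref{maxa-} only for the single endpoint weight $w^-_x$. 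With that bookkeeping the estimate is robust; without it, the naive bound would lose a factor $n^{1/2}$ per step and fail. A secondary technical point is the passage from "self-avoiding paths of length exactly $k$" to "vertices at distance $\le h_\eps$", which is immediate but should be stated.
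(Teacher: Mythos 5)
There is a genuine gap at the final step: the first-moment/Markov/union-bound strategy cannot produce a per-vertex failure probability of order $o(1/n)$. Your expectation computation, done correctly (free summation over each interior vertex contributes a factor $\frac{\log n}{n}\sum_y w^+_y w^-_y = O(\log n)$, as you note in your second paragraph), gives $\E[N_k(x)]\le C^k(\log n)^k\, w^-_x$, hence $\E\big[|\mathcal{B}_x^-(h_\eps)|\big]\le w^-_x\, n^{\eps/20+o(1)}\le n^{1/2-\eta/6+\eps/20+o(1)}$. Markov's inequality at the threshold $n^{1/2+\eps}$ then yields only $\p\big(|\mathcal{B}_x^-(h_\eps)|>n^{1/2+\eps}\big)\le n^{-19\eps/20-\eta/6+o(1)}$, and multiplying by $n$ (or, more sharply, summing $w^-_x\,n^{-1/2-19\eps/20+o(1)}$ over $x$ using $\sum_x w^-_x=\Theta(n)$) gives $n^{1/2-19\eps/20+o(1)}\to\infty$ for small $\eps$. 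Your closing assertion that the per-vertex bound is $n^{-1/2-\eta/8+o(1)}\ll n^{-1}$ is false on two counts: the exponent does not come out to $-1/2-\eta/8$, and in any case $n^{-1/2-\eta/8}$ is much \emph{larger} than $n^{-1}$ since $\tfrac12+\tfrac{\eta}{8}<1$. (There is also a sign slip in your first paragraph, where a per-step factor of $n^{+1/2-\eta/6}\log n$ from the crude bound is written as $n^{-1/2-\eta/6}\log n$; but even the correct $O(\log n)$ per step does not save the argument.) The first moment of $|\mathcal{B}_x^-(h_\eps)|$ really is of order $w^-_x\,n^{\eps/20+o(1)}$ — each level multiplies the count by roughly the mean in-degree $\Theta(\log n)$ — so no bookkeeping of the weights can repair this; the obstruction is that Markov on the first moment ignores the concentration of $|\mathcal{B}_x^-(h_\eps)|$.

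The paper's proof gets around this by estimating the \emph{second} moment restricted to the good event $\event$ of Lemma~\ref{C} (maximum out-degree $\le C\log n$): writing $\E[|\mathcal{B}_x^-(h_\eps)|^2\one_{\event}]=\sum_{y,z}\p(x\in\mathcal{B}^+_y,\,x\in\mathcal{B}^+_z,\,\event)$ and splitting according to whether the two forward balls merge before reaching $x$, it obtains $\E[|\mathcal{B}_x^-(h_\eps)|^2\one_{\event}]\le (w^-_x)^2 n^{\eps}\log^3 n$. Markov at level $n^{1+2\eps}$ then gives a per-vertex bound $(w^-_x)^2\log^3 n/n^{1+\eps}$, which is summable over $x\in[n]$ precisely because $\sum_x(w^-_x)^2=O(n)$ by \eqref{assumption}; the total is $O(\log^3 n\cdot n^{-\eps})=o(1)$. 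In effect the second moment squares the small ratio $\E[|\mathcal{B}_x^-|]/n^{1/2+\eps}$, which is exactly the gain needed to beat the union bound. If you want to keep a path-counting flavour, you would need to upgrade your estimate to a second-moment (pairs of paths) or concentration argument; the first moment alone is structurally insufficient here.
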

\begin{proof}
The idea is to provide a suitable upper bound on 
$\p(|\mathcal{B}_x^-(h_\eps)| > n^{1/2+\eps})$, and then conclude the proof by a union bound.
In this spirit, we claim  that, for $n$ large enough,	
\begin{equation}\label{claim1}
\E[|\mathcal{B}_x^-(h_\eps)|^2 \cdot \one_{\event} ]\le  w_x^- n^\eps \log^3 n\,,
		\end{equation}
where $\event$ is the typical event described in Lemma \ref{C}.
Assuming its validity, we readily get, by Markov's inequality, that
		\begin{equation}
			\p(|\mathcal{B}_x^-(h_\eps)| > n^{1/2+\eps}\,,\, \event)
			\le \frac{\E[|\mathcal{B}_x^-(h_\eps)|^2\cdot \one_{\event}]}{n^{1+2\eps}}
			\le  \frac{ w_x^- \log^3 n}{n^{1+\eps}}\,.
		\end{equation}
From Lemma \ref{C}, applying a union bound on $x\in[n]$ and by the assumption \eqref{assumption}, we conclude that for large $n$ 
\begin{equation} \label{important}
\begin{split}
\p({\mathcal{S}^-_\eps}^c)
&= 
\p({\mathcal{S}^-_\eps}^c \cap \event)+o(1) 
\le \sum_{x\in[n]} \p(|\mathcal{B}_x^-(h_\eps)| > n^{1/2+\eps}\,,\, \event) + o(1)
\\
& \le 
\frac{\log^3 n}{n^{1+\eps}}\sum_{x \in [n]}w_x^-+o(1)
=o(1)\,,
\end{split}
\end{equation}
which proves the statement.

It remains to show inequality \eqref{claim1}.		
Let $\mathcal{B}_x^\pm=\mathcal{B}_x^\pm({h_\eps})$ and write

\begin{equation}
\E[|\mathcal{B}_x^-|^2\cdot \one_{\event}]
=\sum_{y \in [n]}\sum_{z \in [n]}
\p(x \in \mathcal{B}_y^+,x \in \mathcal{B}_z^+, \event)\,,
\end{equation}
where	
	\begin{equation} \label{decomposition}
			\p(x \in \mathcal{B}_y^+,x \in \mathcal{B}_z^+, \event) 
			\le \p(x,z \in \mathcal{B}_y^+, \event)+\p(x \in \mathcal{B}_y^+,x \in \mathcal{B}_z^+, z \notin \mathcal{B}_y^+, \event).
		\end{equation}
We start by estimating the first term on the \rhs of the last display.
Note that, from the independence of the edge connectivity and applying Lemma \ref{C}, 
we can write
$$\p(x,z \in \mathcal{B}_y^+, \event)=\p(x\in \mathcal{B}_y^+, \event)\p(z \in \mathcal{B}_y^+| \event)  
= \p(x\in \mathcal{B}_y^+, \event)\p(z \in \mathcal{B}_y^+, \event)(1+o(1))\,,$$
and it is then enough to bound $\p(x \in \mathcal{B}_y^+\,,\,\event )$ for general $x\in[n]$. 

On the event $\event$, the out-neighbourhood $\mathcal{B}_y^+$ contains at most $(C\log n)^{h_\eps}$ vertices. 
Moreover, the probability that a vertex $u \in [n]\setminus \{x\}$ 
is connected to $x$ is 
$$p_{ux}=w_u^+w_x^- \frac{\log n}{n} \le M_1 w_x^- \frac{\log n}{n},$$ where $M_1$ 
is the constant given in the assumption \eqref{eq1}.	
Let $A_x$ denote the subset of $[n]$, of size $(C\log n)^{h_\eps}$, whose vertices maximize the parameters 
$(p_{u x})_{u \in [n]\setminus \{x\}}$. 
Then, for large $n$,
		\begin{equation} \label{bound}
			\begin{split}
				\p(x \in \mathcal{B}_y^+ \,,\,\event) 
				& \le
				\p\Bigg(\bigcup_{u \in \mathcal{B}_y^+\setminus\{x\}} \{u \to x\}\cap\event\Bigg)
				\le \p\left(\bigcup_{u \in A_x} \{u \to x\} \cap\event\right)\\
				&\le (C \log n)^{h_\eps} M_1 w_x^- \frac{\log n}{n}\le   w_x^-  n^{\frac{\eps}{10}}\frac{\log n}{n}\,.
			\end{split}	
		\end{equation}
We now bound the second term in \eqref{decomposition}. 
Note that, given that $x\in \mathcal{B}^+_y$ and $z\notin \mathcal{B}^+_y$, 
the event $x\in \mathcal{B}^+_z$ can be obtained if either $x$ is the closest 
vertex to $y$ in $\mathcal{B}^+_y\cap\mathcal{B}^+_z$, or there exists 
$u\neq x$ which is the closest vertex to $y$ in $\in \mathcal{B}^+_y\cap \mathcal{B}^+_z$ and that is connected to $x$ by a directed path.

Reasoning as before, and for large $n$, the first scenario  has probability less than 
$(w_x^- n^\frac{\eps}{10}\frac{\log n}{n})^2$,
while the second scenario is included in the event 
$E_{y,z,u}=\{u\in \mathcal{B}^+_y\cap \mathcal{B}^+_z\}\cap\{x\in \mathcal{B}^+_u\}$
that has probability
\begin{equation*}
\p(E_{y,z,u} \cap\event)\le  w_x^-(w_u^-)^2 n^\frac{3\eps}{10}\frac{\log^3 n}{n^3} \,.
\end{equation*}
All in all, and by assumption \eqref{assumption}, we get
\[
\begin{split}
\p(x \in \mathcal{B}_y^+,x \in \mathcal{B}_z^+, z \notin \mathcal{B}_y^+ \,,\,\event) 
&\le 
w_x^-w_z^- n^\frac{\eps}{5}\frac{\log^2 n}{n^2}
+M_2 w_x^- n^\frac{3\eps}{10}\frac{\log^3 n}{n^2}\,.  
\end{split}
\]
Summing over $y,\,z\in[n]$, and using that $\w=\Theta(n)$, we get that for large $n$
\begin{equation}
\E[|\mathcal{B}_x^-|^2\cdot \one_{\event}] \le w_x^-  n^\eps \log^3 n\,,
\end{equation}
which concludes the proof of the claimed inequality \eqref{claim1}, and then  of the lemma.
 \end{proof}

\subsubsection{\textbf{Tree excess of out-neighbourhoods}}\label{sec_excess}
Following \cite{BCS19}, we introduce a quantity that measures how much subgraphs look like trees.
Given a graph $S=(V,E)$, we  define its tree excess $\trex(S)$ as the minimum number of edges 
to remove in order to obtain a directed tree, that is 
$$\trex(S):=1+|E|-|V|\,. $$	
Then, for every $s\ge0$, we define the \textbf{bad} event $\mathcal{G}^+(s)$ as the set of graphs such that
there exists a vertex having an out-neighbourhood of depth $s$ with tree-excess greater than $1$, that is 
$$\mathcal{G}^+(s):=\bigcup_{x \in [n]}\{\trex(\mathcal{B}^+_x(s))\ge 2 \}\,.$$
\begin{lemma} \label{G+}
Let $h_\eps$ be as in \eqref{h_eps}. Then, for all $\eps$ sufficiently small, it holds
	\begin{equation}
				\PP(\mathcal{G}^+(2h_\eps))=o(1)\,.
	\end{equation}
\end{lemma}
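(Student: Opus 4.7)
The plan is to combine the out-degree concentration of Lemma~\ref{C} with a first-moment estimate on pairs of ``back edges'' arising in a BFS exploration. I would first restrict to the event $\event$ of Lemma~\ref{C}, on which $\maxdeg \le C\log n$, so that the deterministic bound $|\mathcal{B}^+_x(2h_\eps)| \le \maxdeg^{2h_\eps}$, combined with Proposition~\ref{prop-entropy} (which gives $2h_\eps = (1+o(1))\eps \log n / (10\log\log n)$), yields, for every $x \in [n]$,
\[
|\mathcal{B}^+_x(2h_\eps)| \le K := (C\log n)^{2h_\eps} = n^{\eps/10 + o(1)}.
\]

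For a fixed $x$, I would then explore $\mathcal{B}^+_x(2h_\eps)$ by BFS, revealing the out-edges of each vertex as it is visited for the first time. Labeling the explored vertices $u_1 = x, u_2, \ldots, u_T$ in BFS order, the BFS out-tree accounts for $T - 1$ of the directed edges internal to the ball, so the tree-excess equals the number of back edges:
\[
\trex(\mathcal{B}^+_x(2h_\eps)) = \sum_{1 \le j < i \le T} Z_{i,j},\qquad Z_{i,j} := \one\{u_i \to u_j \in G\}.
\]
The crucial fact specific to the Chung--Lu construction is that the out-rows of distinct vertices are independent Bernoulli vectors. Thus, conditionally on the BFS history up to step $i$, the variable $Z_{i,j}$ is Bernoulli with parameter $p_{u_i u_j} \le p_{\text{max}}$, mutually independent across $j<i$ and across different steps. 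An iterated tower-property argument applied at the stopping events $\{i \le T\}$ and $\{i' \le T\}$ gives, for any $(i,j) \ne (i',j')$,
\[
\E\bigl[Z_{i,j}\, Z_{i',j'}\,\one_{i,\,i' \le T}\bigr] \le p_{\text{max}}^2.
\]

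Using $\PP(\trex \ge 2) \le \E\bigl[\binom{\trex}{2}\bigr]$ and summing over at most $K^4$ ordered pairs of indices yields
\[
\PP\bigl(\trex(\mathcal{B}^+_x(2h_\eps)) \ge 2 \cap \event\bigr) \le \tfrac{1}{2} K^4 p_{\text{max}}^2.
\]
A union bound over $x \in [n]$, together with $p_{\text{max}}^2 = o(n^{-1 - 2\eta/7})$ from \eqref{maxp_xy}, gives
\[
\PP(\mathcal{G}^+(2h_\eps)) \le \tfrac{n}{2} K^4 p_{\text{max}}^2 + \PP(\event^c) = o\bigl(n^{2\eps/5 - 2\eta/7}\bigr) + o(1),
\]
which is $o(1)$ as soon as $\eps < 5\eta/7$, i.e., for $\eps$ sufficiently small as in the hypothesis. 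The main obstacle is the conditional-independence bound sketched above: rigorously controlling $\E[Z_{i,j} Z_{i',j'}\, \one_{i,\,i' \le T}]$ while $T$ is a data-dependent stopping time requires care with the filtration generated by the BFS exploration and with the joint revealing of tree and back edges. An alternative route would be to enumerate the minimal $\trex$-$2$ substructures (theta, figure-of-eight, and handcuff topologies) rooted at $x$, but since the Chung--Lu edge probabilities depend on the weights $w_v^-$, such an enumeration would need to control sums of powers of $w_v^-$ via assumption~\eqref{assumption}, which is more involved than the BFS approach.
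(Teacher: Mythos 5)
Your proof is correct and follows essentially the same route as the paper: both restrict to the bounded-out-degree event from Lemma~\ref{C}, bound the ball size by $(C\log n)^{2h_\eps}$, and control the probability of two collisions via a second-moment bound with $p_{\text{max}}$ (the paper phrases this as stochastic domination by $\mathrm{Bin}(m,q)$, which after $\PP(\mathrm{Bin}\ge 2)\le (mq)^2$ is numerically identical to your $\tfrac12 K^4 p_{\text{max}}^2$). The stopping-time subtlety you flag applies equally to the paper's domination argument and is standard for BFS explorations; incidentally, your exponent $2\eps/5$ is the correct one (the paper's $4\eps/5$ appears to be a slip), but either way the choice of $\eps$ small finishes the proof.
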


\begin{proof}
    First note that, for any $x\in[n]$, the event $\{\trex(\mathcal{B}^+_x(s))\ge 2 \}$ corresponds to the event 
	that, while drawing iteratively $\mathcal{B}^+_x(s)$, at least two vertices are explored at least twice. 

	Let $C>1$ be a constant such that $\p(\event)=1-o(1)$, as in Lemma~\ref{C}, so that,
	being $\{\Delta\leq C\log n\}\subset \event$, it holds that 
	\begin{equation}
		\p(\mathcal{G}^+(2h_\eps))= \p(\mathcal{G}^+(2h_\eps) \cap \{\Delta_+\leq C\log n\}) + o(1).
	\end{equation} 
	On the event $\{\Delta_+\leq C\log n\}$, the ball $\mathcal{B}^+_x(s)$ has size  at most 
	$(C\log n)^{2h_\eps}$, and hence the probability of the event $\{\trex(\mathcal{B}^+_x(s))\ge 2 \}$ 
	can be bounded above using, as a counter of vertices which are explored at least twice, 
	a binomial random variable $\text{Bin}(m,q)$, where 	$m=(C\log n)^{2h_\eps}$ is 
	the maximum size of $\mathcal{B}^+_x(s)$, and $q$ bounds above the maximum probability of choosing an already explored vertex.
	
	In particular, letting $p_{\text{max}}:=\max_{x,y \in [n]} p_{xy}$ and with a union bound on the vertices $y\in \mathcal{B}^+_x(s)$,
	  we set $q=(C\log n)^{2h_\eps} p_{\text{max}}$ and get
		\begin{equation}
			\begin{split}
				\p(\trex(\mathcal{B}^+_x(s))\ge 2\,,\, \Delta_+<C\log n) 
				&\le  \p\left(\bin\left(m,q\right)\ge 2\right)\\
				&\le \left((C \log n)^{4h_\eps}p_{\text{max}}\right)^2\,.
			\end{split}
		\end{equation}
Since $p_{\text{max}}=O(n^{-\frac12 - \frac \eta 7})$, due to \eqref{maxp_xy}, and inserting the explicit value of $h_\eps$,
the \rhs of the above inequality turns to be $O(n^{-1+\frac {4 \eps} 5-\frac {2\eta}{7}})$. 
Choosing $\eps$ sufficiently small, e.g.~such that $\frac  {4 \eps} 5 < \frac{2}{7}\eta$, we conclude the proof by a union bound over $x\in[n]$.
\end{proof}

\section{Typical mass of random walk trajectories}
%
Having at hand some remarkable properties of the random environment, we switch to consider their impact on the random walk trajectories.
The goal of this section is to characterize the typical 
mass of a random walk of length $t=\Theta(\tent)$.
In particular, Theorem~\ref{thm_LLN} below can be interpreted as a 
quenched law of large numbers for this quantity (or rather its logarithm).
This last result will be then refined to a central limit theorem, which applies to
all trajectories of length $t$, with $t$ taken in an appropriate critical window 
(see Theorem \ref{CLT} below).\\

We start with a  simple lemma, that is a direct adaptation of Lemma 3.1 in \cite{CCPQ} 
and that will be useful in the next computations.
Recall the definition of the  vertex-set $V_\eps$ given in \eqref{veps}, whose elements are called 
$h_\eps$-roots. We are going to show that \whp with respect to the graph setting, the quenched probability that the random walk does not belong to $V_{\varepsilon}$ after $t$ steps decays at least exponentially in $t$.
%
\begin{lemma} \label{lemmaveps}  
	Let $h_\eps$ be as in \eqref{h_eps}. Then, for all $\eps$ sufficiently small and all $t \le h_\eps$, 		
	\begin{equation}
		\p(\max_{x \in [n]} \quench_x(X_t \notin V_\eps) \le 2^{-t})=1-o(1).
	\end{equation}
\end{lemma}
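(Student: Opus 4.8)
The plan is to show that from any starting point $x$, each single step of the random walk lands in the bad set $V_\eps^c = [n] \setminus V_\eps$ with probability at most $1/2$, uniformly over current positions, on a high-probability event for the graph; the bound $2^{-t}$ then follows by iterating the Markov property. First I would observe that a vertex $y$ fails to be an $h_\eps$-root precisely when $\mathcal{B}^+_y(h_\eps)$ is not a directed tree, i.e.~when $\trex(\mathcal{B}^+_y(h_\eps)) \ge 2$. Thus $V_\eps^c \subseteq \bigcup_{y}\{\trex(\mathcal{B}^+_y(h_\eps))\ge 2\}$, and Lemma \ref{G+} (applied with $h_\eps$ in place of $2h_\eps$, which only makes the event smaller) tells us that $\p(\mathcal{G}^+(h_\eps)) = o(1)$. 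However, this global statement is not quite enough: I need a \emph{quenched} bound on the probability that the walk, after one more step, is in $V_\eps^c$, which requires controlling how many bad vertices lie in a typical out-neighbourhood.

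The key step is a first-moment (union-bound) estimate performed \emph{along the walk}. Fix a realization in the good event $\event$ of Lemma \ref{C}, so every out-degree is between $2$ and $C\log n$. Starting from $x$, after $k \le h_\eps$ steps the walk has explored at most $(C\log n)^{k} \le (C\log n)^{h_\eps} = n^{\eps/20}$ vertices (up to constants). For each vertex $u$ reached, the conditional probability that $u \notin V_\eps$, i.e.~that $\mathcal{B}^+_u(h_\eps)$ already contains a cycle-creating edge, is small: by the same binomial bound as in the proof of Lemma \ref{G+}, the out-neighbourhood of depth $h_\eps$ has size at most $(C\log n)^{h_\eps}$, so a collision has probability at most $((C\log n)^{2h_\eps} p_{\max})^2 = O(n^{-1 + \frac{2\eps}{5} - \frac{2\eta}{7}})$, which is $o(n^{-1/2})$ for $\eps$ small. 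I would condition on the graph explored so far by the walk; since the walk has used only $O(\mathrm{poly}\log n)$ edges while exploring, the out-neighbourhood of depth $h_\eps$ from the current vertex is still essentially freshly sampled, so this estimate survives the conditioning (this is exactly the annealed-exploration bookkeeping used in Section \ref{annealed random walk} and in the proof of Lemma \ref{G+}). Hence, summing over the at most $n^{\eps/20}$ visited vertices, $\quench_x(X_t \notin V_\eps) \le n^{\eps/20} \cdot o(n^{-1/2})$ on a graph event of probability $1 - o(1)$ — vastly smaller than $2^{-t}$ for $t \le h_\eps$.

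Actually, to land the clean bound $2^{-t}$ rather than a one-shot estimate, I would phrase it inductively: let $\rho := \max_{y\in[n]} \quench_y(X_1 \notin V_\eps)$, which the previous paragraph shows is $o(1)$ (in fact polynomially small) on a good graph event; then by the Markov property $\quench_x(X_t \notin V_\eps) \le \rho^{\,?}$ — more carefully, $\quench_x(X_t \notin V_\eps) = \sum_z \quench_x(X_{t-1}=z)\,\quench_z(X_1 \notin V_\eps) \le \rho$, so a single step already suffices, and since $\rho = o(1) \le 2^{-h_\eps}$ for $n$ large, the claimed bound holds for all $t \le h_\eps$ simultaneously. The main obstacle is the conditioning issue flagged above: making rigorous that exploring $\mathcal{B}^+_u(h_\eps)$ for a vertex $u$ on the walk's trajectory is not spoiled by the $O(\mathrm{poly}\log n)$ edges already revealed — but because the number of revealed edges is negligible compared to $n p_{\max} \cdot |\mathcal{B}^+_u(h_\eps)|$, the extra conditioning changes each collision probability by only a $(1+o(1))$ factor, and the union bound goes through exactly as in Lemma \ref{G+}.
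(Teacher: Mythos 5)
Your proposal does not match the paper's argument, and more importantly it contains a genuine gap that the paper's structural approach is designed to avoid.

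Your plan is a first-moment computation: you argue that for each vertex $u$ on the trajectory, the (conditional-on-the-past) probability that $\mathcal{B}^+_u(h_\eps)$ fails to be a tree is polynomially small, and then you sum over the (polylogarithmically many) vertices on the trajectory. But this controls the \emph{annealed} probability $\p^{\textup{an}}_x(X_t\notin V_\eps)=\E[\quench_x(X_t\notin V_\eps)]$, not the quenched quantity $\quench_x(X_t\notin V_\eps)$ itself, and certainly not its maximum over $x\in[n]$. You implicitly acknowledge this — you correctly note that Lemma~\ref{G+} alone is ``not quite enough'' because a quenched bound is needed — but the jump in your last paragraph, where you set $\rho:=\max_{y}\quench_y(X_1\notin V_\eps)$ and assert that ``the previous paragraph shows [it] is $o(1)$ \dots\ on a good graph event'', is exactly the unjustified step. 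A small annealed probability does not forbid the existence of a single bad $x$ whose entire out-neighbourhood lies in $V_\eps^c$, in which case $\quench_x(X_1\notin V_\eps)=1$; Markov plus a union bound over $x$ does not close this because your per-vertex bound is only $o(n^{-1/2})$, not $o(n^{-1})$.

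The paper's proof circumvents this by extracting a deterministic structural fact from the high-probability event $(\mathcal{G}^+(2h_\eps))^c\cap\event$, rather than pushing a moment estimate through a quenched statement. On that event, for every $x$ the ball $\mathcal{B}^+_x(2h_\eps)$ contains at most one cycle; since $\mathcal{B}^+_{X_t}(h_\eps)\subset\mathcal{B}^+_x(2h_\eps)$ for all $t\le h_\eps$ (this is why the radius must be $2h_\eps$, not $h_\eps$ as you suggest — shrinking the radius does \emph{not} ``only make the event smaller'' in the direction you need), the only way $X_t\notin V_\eps$ is that the walk follows the \emph{unique} directed path from $x$ toward the nearest vertex $y$ on that cycle for $t$ consecutive steps. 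On $\event$ every out-degree is at least $2$, so this specific path is followed with quenched probability at most $2^{-t}$, uniformly in $x$. This is where the clean bound $2^{-t}$ actually comes from; it is a path-counting bound, not a smallness-of-$V_\eps^c$ bound. A secondary issue in your estimate: membership in $V_\eps^c$ needs $\trex\ge 1$, for which the binomial count gives a single factor $(C\log n)^{4h_\eps}p_{\max}$, not the squared quantity you quote from the $\trex\ge 2$ bound in Lemma~\ref{G+}. The missing idea is the reduction of $\{X_t\notin V_\eps\}$ to the event of following one prescribed path, enabled by $\trex(\mathcal{B}^+_x(2h_\eps))\le 1$; without it, no version of the union bound yields a uniform quenched estimate.
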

\begin{proof}
First note that, in the notation introduced in Subsection \ref{sec_excess}, we can rewrite
 $$\veps=\{y \in [n] : \trex(\mathcal{B}_y^+(h_\eps))=0\}.$$ 
In particular, due to Lemma~\ref{G+}, we can restrict ourselves,  with an error of order $o(1)$,
to the event 
$$(\mathcal{G}^+(2h_\eps))^c=\bigcap_{x\in[n]}\{\trex(\mathcal{B}_x^+(2h_\eps))\le 1\}\,.$$
In other words, under this event, the out-neighbourhood $\mathcal{B}_x^+(2h_\eps)$ is a directed tree except 
for at most one directed edge, for all $x\in[n]$. 
If $\mathcal{B}_x^+(2h_\eps)$ is a tree, then also $\mathcal{B}_{X_t}^+(h_\eps)$ is a tree and hence $X_t \in V_\eps$.
If $\mathcal{B}_x^+(2h_\eps)$ is not a tree, then it contains precisely one cycle and  we can identify 
the closest node to $x$ on this cycle, say $y$, that will be at a distance $s<2h_\eps$ from $x$.
Note that if $s<t$, then necessarily $\mathcal{B}_{X_t}^+(h_\eps)$ is a tree, as the contrary 
would imply the existence of a second cycle in $\mathcal{B}_x^+(2h_\eps)$, which is impossible under
$(\mathcal{G}^+(2h_\eps))^c$.
Instead, if $t\le s$, the event $\{X_t \notin V_\eps\}$ is realized only if the random walk
follows the unique directed path from $x$ to $y$ for $t$ steps.
 In view of Lemma~\ref{C}, we can further restrict  on the event $\event$, which ensures that $\mindeg \ge 2$,
  and on this event we derive the bound $\quench_x(X_t \notin V_\eps) \le 2^{-t}$, that holds w.h.p.~and concludes the proof. 
 \end{proof}

Before stating and proving the main results of this section, let us introduce some notation.

Let  $(D_k)_{k\ge 1 }$ be independent copies of $D_V^+$, the
random out-degree of a random vertex $V\in [n]$ sampled from $\muin$. This sequence is
defined w.r.t.~a probability measure that with a little abuse of notation will be simply denoted by $\p$.
Moreover, for $t\in\mathbb{N}$, set 
$$S_t:=\sum_{k=1}^t L_k\,,\quad  \mbox{ where }\quad L_k:=\log(D_k\vee 1)\,.$$
Then, for every $\theta \in (0,1)$ and $t\in\mathbb{N}$, we define 
\begin{equation}\label{q-secondo}
	q_t(\theta):=\p\left(\prod_{k=1}^t \frac{1}{D_k \vee 1}>\theta\right)=
	\p\left({S_t}<-\log (\theta)\right).
\end{equation}
Note that $q_t(\theta)$ corresponds to the probability that a path made of $t$ i.i.d.~samples from the in-degree distribution has mass at least $\theta$. 
Under suitable hypotheses, we will show that  the quenched probability
 $\mathcal Q_{x,t}(\theta)$, given in \eqref{quenched-theta}, is well approximated by $q_t(\theta)$. 
This is the crucial idea in order to prove the next result.
\begin{theorem}[Quenched Law of Large Number] \label{thm_LLN}
	Let $\mathcal Q_{x,t}(\theta)$ be the quenched  probability given in \eqref{quenched-theta},
and assume that $t=\Theta(\tent)$ and $\theta \in (0,1)$ are such that	
	\begin{equation}	\label{limitrho}
	-\frac{\log \theta}{\entropy t}\xrightarrow{n \to +\infty}\rho.
	\end{equation} 
	Then
	\begin{enumerate}
			\item[(i)] If $\rho < 1		\quad \Longrightarrow\quad
		\max_{x \in [n]}\cQ_{x,t}(\theta)\xrightarrow{ \quad\p\quad}0\,;$	\\
		\item[(ii)] If $\rho > 1\quad \Longrightarrow\quad 
			\min_{x \in [n]}\cQ_{x,t}(\theta)\xrightarrow{ \quad\p\quad}1\,.$
	\end{enumerate}
\end{theorem}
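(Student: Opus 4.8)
The plan is to transfer the central limit behaviour of the i.i.d.\ sum $S_t=\sum_{k=1}^t L_k$ to the quenched probability $\cQ_{x,t}(\theta)$, uniformly in the starting vertex. The first step is to understand $q_t(\theta)$ itself. By Proposition~\ref{prop-entropy} we have $\E[L_k]=\entropy=\log\log n\,(1+o(1))$ and $\var(L_k)=\sigma^2=O(\log\log n)$, and $t=\Theta(\tent)=\Theta(\log n/\log\log n)$, so $\E[S_t]=\entropy t\,(1+o(1))$ and $\var(S_t)=\sigma^2 t=O(\log n)$. Under \eqref{limitrho} we are asking whether $S_t<-\log\theta=(\rho+o(1))\entropy t$. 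When $\rho<1$ the threshold is below the mean by a macroscopic multiple of the mean, and since the $L_k$ are bounded between $0$ and $\log(C\log n)$ on the good event (Lemma~\ref{C}) — hence have exponential moments — a standard Chernoff/large-deviation bound gives $q_t(\theta)\le e^{-c\entropy t}=e^{-c'\log n}=n^{-c'}$ for some $c'>0$; when $\rho>1$ the same argument applied to the complementary event gives $1-q_t(\theta)\le n^{-c'}$. (One must be slightly careful that the deviation $(1-\rho)\entropy t$ is large compared to $\sqrt{\var S_t}=O(\sqrt{\log n})$, which holds since $\entropy t\asymp\log n$; the bounded-increments Bernstein inequality of Proposition~\ref{concentration-inequality}, or a direct moment-generating-function computation, suffices.)

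The second and main step is the comparison $\cQ_{x,t}(\theta)\approx q_t(\theta)$ uniformly in $x$. The idea, following \cite{BCS18,CCPQ}, is that a random walk path of length $t$ from $x$ is \emph{nice} with quenched probability $1-o_\p(1)$ uniformly in $x\in\veps$ (and after one application of Lemma~\ref{lemmaveps} we may assume the walk sits in $\veps$ after $h_\eps$ steps, losing only $2^{-h_\eps}=o(1)$). Along a nice path the successive out-degrees encountered are, by the tree-like structure (Lemma~\ref{G+}, Lemma~\ref{size-in-ball}) and the annealed-walk construction of Subsection~\ref{annealed random walk} together with Lemma~\ref{lemmaiid}, essentially i.i.d.\ samples from $\muin$ with out-degrees distributed as $D_V^+$ — this is exactly the content that $\cQ_{x,t}(\theta)$ is squeezed between $q_{t(1\pm o(1))}(\theta(1\pm o(1)))$ up to additive $o_\p(1)$ errors. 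Concretely I would show a two-sided bound: for the upper bound (needed for (i)) use a first-moment/Markov estimate on the annealed probability that the path mass exceeds $\theta$, decomposing the mass as a product of reciprocal out-degrees and invoking Lemma~\ref{lemmaiid} to replace conditional out-degree reciprocals by $\muin$ weights, which yields $\E[\cQ_{x,t}(\theta)]\le q_t(\theta)(1+o(1))+o(1/n)$, then union-bound over $x\in[n]$ using $q_t(\theta)\le n^{-c'}$; for the lower bound (needed for (ii)) restrict to nice paths and use that on them the mass is a genuine product of $\asymp t$ factors $1/(D_{x_i}^+\vee1)$ with the $D_{x_i}^+$ stochastically comparable to independent $D_V^+$'s, so $\cQ_{x,t}(\theta)\ge q_t(\theta)-o_\p(1)=1-o_\p(1)$.

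Putting these together: in case $\rho<1$, $\max_x\cQ_{x,t}(\theta)\le \sum_{x}\PP(\cQ_{x,t}(\theta)>\epsilon)\to0$ by Markov and $\E\cQ_{x,t}(\theta)\le n^{-c'}(1+o(1))+o(1/n)$, giving \eqref{thm_LLN}(i); in case $\rho>1$, the nice-path lower bound gives $\min_x\cQ_{x,t}(\theta)\ge 1-o_\p(1)$ since $1-q_t(\theta)\le n^{-c'}$ and the nice-path failure probability is $o_\p(1)$ uniformly in $x$, giving \eqref{thm_LLN}(ii).

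The hard part will be the uniform-in-$x$ control of the nice-path event and, relatedly, making rigorous the statement that the out-degrees seen along the walk behave like an i.i.d.\ sequence sampled from $\muin$ — this is where the tree-excess bound (Lemma~\ref{G+}), the in-neighbourhood size bound (Lemma~\ref{size-in-ball}), the annealed construction, and Lemma~\ref{lemmaiid} all have to be combined, and where the error terms from replacing the (random, graph-dependent) conditional expectations $\E[1/D_{x_i}^+\mid\cdot]$ by $1/\E[D_{x_i}^+]$ (Remark~\ref{inverse-remark}) must be shown to stay negligible when multiplied over the $\asymp t$ steps. The large-deviation input on $q_t(\theta)$ is comparatively routine given Proposition~\ref{prop-entropy} and the boundedness of $L_k$ on $\event$.
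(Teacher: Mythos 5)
The preliminary analysis of $q_t(\theta)$ is fine: the paper actually only uses Chebyshev (getting $q_t(\theta)=o(1)$ or $1-q_t(\theta)=o(1)$), whereas you invoke a Chernoff/Bernstein bound claiming a polynomial rate. One can in fact check (splitting $\E[(D_1\vee1)^{-\lambda}]$ according to whether $D_1\gtrless\eps\log n$ and using that $\p(D_V^+<\eps\log n)$ is polynomially small since $\E D_V^+=\Theta(\log n)$) that $q_t(\theta)$ is even super-polynomially small, so this part of your proposal goes through.

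The genuine gap is in the transfer from annealed to quenched, uniformly over $x$. You propose a \emph{first}-moment Markov bound: $\E[\cQ_{x,t}(\theta)]\le q_t(\theta)(1+o(1))+o(1/n)$ and then a union bound over $x\in[n]$. The second, additive error term is not $o(1/n)$ as you need; it is only $o(1)$ (or at best of order $t\,\mumax=O(\log n\, n^{-1/2-\eta/6})$). Indeed, the annealed identification of the visited out-degrees with an i.i.d.\ $\muin$-sample (Lemma~\ref{lemmaiid}) holds \emph{on the event $\mathcal{A}_t$ of no self-intersection}, and the annealed probability of a self-intersection for a walk of length $t$ started at a fixed $x$ is $\Theta(t\muin(x))$ for a return to $x$ plus $O(t^2\mumax)$ for other collisions; neither of these is $o(n^{-1})$. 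After multiplying by $n$ in the union bound over starting vertices the error diverges. This is precisely the obstacle the paper circumvents by bounding a \emph{high} moment: with $\bar{\cQ}_{x,t}(\theta):=\sum_y P^\ell(x,y)\cQ_{y,t}(\theta)$ and the annealed law of $K=\lfloor\log^2 n\rfloor$ independent walks, one shows $\p_x^{\mathrm{an},K}(B_j\mid B_{j-1})\le q_t(\theta)+\delta/2$, which via the $K$-th moment Markov inequality yields $\p(\one_{x\in\veps}\bar{\cQ}_{x,t}(\theta)\ge q_t(\theta)+\delta)\le\bigl(\tfrac{q_t(\theta)+\delta/2}{q_t(\theta)+\delta}\bigr)^K=o(n^{-1})$; the $o(1)$ self-intersection/cycle probability at each step is absorbed into that $\delta/2$ slack and then suppressed by the $K$-th power. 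A first moment cannot do this. (There is a secondary issue: for part~(ii) you invoke the nice-path machinery of Section~5, but Proposition~\ref{tree} itself uses Theorem~\ref{thm_LLN}(ii) to dismiss its condition~(1), so that route would be circular; the paper proves the theorem directly via the $q_t$ and $\bar{\cQ}$ comparison without any nice-path input.)
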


Note that, since $\entropy t=\Theta(\log n)$, the assumption \eqref{limitrho} implies that $\log \theta=\Theta(\log n)$. 
A possible choice could be $\theta=n^{-\rho}$, with possible multiplicative poly-log corrections.
\begin{proof}
	Our proof follows the strategy  given in \cite[Prop 3.2]{CCPQ}. 
	For $\ell=3\log \log n$, we define
	\begin{equation}
		\bar{\cQ}_{x,t}(\theta):=\sum_{y \in [n]}P^{\ell}(x,y)\cQ_{y,t}(\theta).
	\end{equation}
	This has the following interpretation. The first $\ell$ steps do not affect the total mass of the trajectory, 
	but in view of Lemma \ref{lemmaveps}, they are sufficient to let the walk move \whp to a $h_\eps$-root vertex. 
	Hence, we let the random walk move for $\ell$ steps and then start recording the mass 
	of the trajectory.
	For $\eps \in (0,\eta/2)$, with $\eta \in (0,1)$ as in \eqref{assumption}, we claim that
		\begin{equation} \label{bar}
		\max_{x \in V_\eps}|\bar{\cQ}_{x,t}(\theta)-q_t(\theta)|\xrightarrow{\quad \p \quad}0.
	\end{equation}
Before proving the claimed convergence,  we explore the asymptotic properties 
of $q_t(\theta)$, and then we complete the proof assuming the validity of \eqref{bar}.
As a first step, note that since $\{L_k\}_{k \ge 1}$ are i.i.d., 
and in view of Proposition \ref{prop-entropy}, it holds that
$$\\
\mathbb{E}(S_t)=\entropy t =\log n(1+o(1))\,,\qquad
\var(S_t)=\sigma^2 t = O(\log n)\,.
$$

From the hypothesis \eqref{limitrho}, it turns that  $-\log\theta= \rho \E[S_t] (1+o(1))$, 
so that we may expect the event in the definition of $q_t(\theta)$ to be typical or rare according to the value of $\rho$. 
Formally:
	\begin{enumerate}
		\item[(i)] if $\rho>1$ then, for large $n$, it holds $- \log\theta-\E[S_t]>0$ and
		\begin{equation}
			\begin{split}
				1-q_t(\theta)&=\p\left(S_t \ge -\log\theta\right)=\p\left(S_t-\E[S_t] \ge -\log\theta-\E[S_t]\right)
			\end{split}
		\end{equation}
		\item[(ii)] if $\rho<1$ then, for large $n$, it holds $ \log\theta+\E[S_t]>0$ and
		\begin{equation}
			\begin{split}
				q_t(\theta)=&\p\left(S_t< -\log\theta\right) =
				\p\left(-S_t + \E[S_t]\ge \log\theta+\E[S_t]\right)
			\end{split}
		\end{equation}
	\end{enumerate}
	In both cases, we can bound above the expression on the right-hand side of the last two displays
	by Chebyshev's inequality, and get
\begin{equation}\label{q(theta)}
\begin{split}
&
\p\left(|S_t-\E[S_t]| \ge |\log\theta+\E[S_t]|\right) 
\le \frac{\var(S_t)}{\left(\log\theta+\E[S_t]\right)^2}=o(1)
\\
&\Longrightarrow \quad
	q_t(\theta)\xrightarrow{ \quad\p\quad} 
\left\{ 
\begin{array}{ll}
1 & \mbox{ if } \rho>1\\
0 & \mbox{ if } \rho<1
\end{array}
\right. \,.
\end{split}
\end{equation}
Going back to the proof of our main statement, let us first observe that since the mass 
of a path of length $\ell$ is always in $[\Delta_+^{-\ell},\delta_+^{-\ell}]$, it holds  that
\begin{equation}\label{dettaglio1}
\begin{split}
\cQ_{x,t}(\theta)
 &\le \quench_x(\mass(X_\ell,X_{\ell +1},\dots,X_t)>\theta \delta_+^{\ell})
 = \bar{\cQ}_{x,t-\ell}(\theta \delta_+^\ell) \\
 &\le \quench_x(\mass(X_\ell,X_{\ell +1},\dots,X_t)>\theta \delta_+^{\ell}| X_\ell \in V_\eps)
+ \quench_x(X_\ell \notin V_\eps)\\
& \le \max_{y\in \veps} \cQ_{y,t-\ell} (\theta\delta_+^\ell) + \quench_x(X_\ell \notin V_\eps)\\
& \le \max_{y\in \veps} \bar{\cQ}_{y,t-2\ell} (\theta\delta_+^{2\ell}) + \quench_x(X_\ell \notin V_\eps)\\
& \le \max_{y\in \veps} \bar{\cQ}_{y,t} (\theta\delta_+^{2\ell}\Delta_+^{-2\ell}) + \quench_x(X_\ell \notin V_\eps) \,.
\end{split}
\end{equation}
By Lemma \ref{lemmaveps} and assuming the validity of \eqref{bar}, we get that
\begin{equation} 
  \begin{split}
  \max_{x \in [n]}\cQ_{x,t}(\theta) 
    &\le q_t(\theta \delta_+^{2\ell}\maxdeg^{-2\ell})+o_{\p}(1)\,.
  \end{split}
\end{equation}
Since $q_t(\cdot)$ is decreasing, and both \whp $\maxdeg\le C \log n$ and $\delta_+ \ge 2$ are valid, we conclude that 	
	\begin{equation} \label{3.8}
		\max_{x \in [n]}\cQ_{x,t}(\theta)\le q_t(\theta2^{\ell}(C\log n)^{-\ell})+o_{\p}(1).
	\end{equation}	
Similarly, we first observe that by definition
\begin{equation}\label{dettaglio2}
\begin{split}
\cQ_{x,t}(\theta)
 &\ge \quench_x(\mass(X_\ell,X_{\ell +1},\dots,X_t)>\theta \Delta_+^{\ell})
 = \bar{\cQ}_{x,t-\ell}(\theta \Delta_+^\ell) \\
 &\ge \quench_x(\mass(X_\ell,X_{\ell +1},\dots,X_t)>\theta \Delta_+^{\ell}| X_\ell \in V_\eps)
\quench_x(X_\ell \in V_\eps)\\
& \ge \min_{x\in \veps} \cQ_{x,t-\ell} (\theta\Delta_+^\ell)\quench_x(X_\ell \in V_\eps)\\
& \ge \min_{x\in \veps} \bar{\cQ}_{x,t-2\ell} (\theta\Delta_+^{2\ell})\quench_x(X_\ell \in V_\eps)\,.
\end{split}
\end{equation}
By Lemma \ref{lemmaveps} and assuming again the validity of \eqref{bar}, we obtain 
\begin{equation}
	\begin{split}
			\min_{x \in [n]}\cQ_{x,t}(\theta) 
			&\ge  \min_{x\in \veps}\bar{\cQ}_{x,t-2\ell}(\theta\maxdeg^{2\ell})(1-2^{-\ell}-o_{\p}(1))\\	
			&\ge q_{t-2\ell}(\theta\maxdeg^{2\ell})+o_{\p}(1)\,
			\ge q_t(\theta\maxdeg^{2\ell})+o_{\p}(1)\,.
		\end{split}
	\end{equation}
Since $q_t(\theta)$ is decreasing in $t$ and  $\maxdeg\le C \log n$ \whp, we conclude that
	\begin{equation} \label{3.10}
		\min_{x \in [n]}\cQ_{x,t}(\theta)\ge q_t(\theta(C\log n)^{2\ell})+o_{\p}(1).
	\end{equation}
		At last note that, setting $\theta'=\theta(C\log n)^{\pm2\ell}$, then 
		$\log{\theta'}= \log{\theta}+ O((\log \log n)^2)$. 
		 Since the asymptotic value of $q_t(\cdot)$ is not sensitive to perturbations $\theta'$ 
	such that $|\log\theta'-\log\theta|=O((\log \log n)^2)$, Eqs.~\eqref{3.8}-\eqref{3.10}, 
	together with \eqref{q(theta)}, conclude the proof of our statement.	
	\\

	Let us finally prove the claimed convergence \eqref{bar}. 
	To this aim, we are going to show that, for all $\delta >0$,
	\begin{equation}\label{fineprova}
		\p(\one_{x \in \veps}\bar{\cQ}_{x,t}(\theta)\ge q_{t}(\theta)+\delta)=o(n^{-1}),
	\end{equation}
	and then we apply a union bound over $x \in \veps$. 
	This will give only half of \eqref{bar}, but actually the same argument applies to 
	$1-\bar{\cQ}_{x,t}(\theta)$ and $1-q_t(\theta)$, completing the proof.\\
	
For any fixed $K\ge 1$, by Markov's inequality  we get
	\begin{equation}\label{due}
		\p\left(\one_{x \in \veps}\bar{\cQ}_{x,t}(\theta)\ge q_{t}(\theta)+\delta \right) 
		\le 
		\frac{\E\left[\one_{x \in \veps}\left(\bar{\cQ}_{x,t}(\theta)\right)^K \right]}
		{\left(q_{t}(\theta)+\delta\right)^K}\,.
	\end{equation}

We now follow the strategy of the proof given in \cite[Prop. 3.2]{CCPQ}.
Consider the annealed law $\p^{\textup{an},K}_x$ of the process $(X^{(k)})_{k \in \{1,\dots,K\}}$ 
defined in Subsection \ref{annealed random walk}, for $T=t+\ell$. 
The process consists of $K$ random walks of length $t+\ell$ and initial measure $\delta_x$, realized one after the other together with the partial graph structure that they explore. 
Let $K=\lfloor \log^2( n)\rfloor$ and, for every $1 \le j \le K$, define the event 
$B_j$ through the following conditions:
\begin{enumerate}
\item[(i)] the union of the first $j$ trajectories up to time $\ell$, that is 
$(X^{(1)}_s,\dots,X^{(j)}_s)_{s\le\ell}$, forms a directed tree;
\item[(ii)] for every $i \le j$, the last $t$ steps of the $i$-th walk, that is $(X^{(i)}_s)_{s\in[\ell+1,\ell+t]}$, 
define a path $\pat$ of mass $\mass(\pat) > \theta$;
\item[(iii)] The vertices in the first $j$ trajectories 
 have out-degree at least $2$.
\end{enumerate}	
By definition, note that the event $\{x \in \veps\}$ is contained 
in the event that the $K$ trajectories form a tree up to depth $\ell$. Hence
\begin{equation}\label{tre}
		\E[\one_{x \in \veps}(\bar{\cQ}_{x,t}(\theta))^K  ] \le
		\p_x^{\textup{an},K}(B_K )
		= \p_x^{\textup{an},K}(B_1)\prod_{i=2}^{K} \p_x^{\textup{an},K}(B_j \ | \ B_{j-1}).
\end{equation}
Note that, given $B_{j-1}$:
\begin{enumerate}
\item either the $j$-th walk follows one of the  previously traced trajectories up to time $\ell$,
thus keeping unchanged the tree structure of depth $\ell$ around $x$.
\item or  the $j$-th walk explores a new vertex before time $\ell$. In that case, the event $B_j$ takes place if the $j$-th walk keeps exploring new vertices at least up to time $\ell$, in order to preserve the whole tree structure, and then moves its last $t$ steps on a path $\pat$ with mass $\mass(\pat)> \theta$.
\end{enumerate}

Since the out-degree of these vertices is at least $2$ by the conditioning, 
the first scenario happens, for all $j\le K$, with conditional probability which is at most 
$$(K-1)2^{-\ell}\le K2^{-\ell}=e^{2\log\log n - \ell\log 2}=o(1).$$

To estimate the probability of the second scenario, first note that, at each step, the conditional probability to visit 
an already explored vertex is less than $K (t+\ell) p_{\text{max}}$.  
Summing this term for all the $\ell+t$ steps of the path, we obtain
that the conditional probability that the $j$-th walk visits an already explored vertex,
and create a cycle along the whole process, is less than $(t+\ell)^2K p_{\text{max}}=o(1)$, for all $j\le K$. 
Hence the tree structure is preserved  w.h.p. along the whole trajectory.

Moreover, on the event that the $j$-th trajectory always visits new vertices, 
the conditional law of its last $t$ steps corresponds to the annealed law 
of a random walk of length $t$ defined on a reduced Chung--Lu graph, 
which is obtained by removing the vertices explored by the whole process before its last $t$ steps,
on the event that it has no self-intersections. 
In particular, from Lemma \ref{lemmaiid} and Eq.~\eqref{tv-cost}, each step of this random walk can be chosen approximately as a sample of $\muin$.  
In other words, after exiting the already visited trajectories, the rest of the path up to step 
$t+\ell$, can be coupled with an \iid sample from $\muin$  with an overall total variation cost which is of order 
$O((t+\ell)^2K p_{\text{max}})=o(1)$.
The second scenario is  then satisfied with probability $q_t(\theta)+o(1)$. 	\\

Altogether, this shows that, for all $\delta>0$ and for all $j\le K$, 
$$\p_x^{\textup{an},K}(B_j \ | \ B_{j-1})\le
q_{t}(\theta)+ \frac{\delta}{2}\,,$$ 
that, thanks to Eqs.~\eqref{due}-\eqref{tre},
implies \eqref{fineprova}. This ends the proof of the claimed convergence \eqref{bar} and of the theorem. 
\end{proof} 

Let us now consider a time window of size $\textbf{\textup{w}}_n:=\frac{\sigma}{\entropy}\sqrt{\tent}$, as given in \eqref{window}.
Then it holds the following.
\begin{theorem}[Central Limit Theorem] \label{CLT}
Let $t_{\lambda}:=\tent + \lambda\textbf{\textup{w}}_n + o(\textbf{\textup{w}}_n)$,  
with $\lambda \in \mathbb{R}$ fixed, and assume that $\theta\in(0,1) $ is such that
	\begin{equation}\label{lambda}
		\frac{\log \theta+\entropy t_{\lambda}}{\sigma \sqrt{t_{\lambda}}} \xrightarrow[n \to +\infty]{} \lambda,
	\end{equation}
where $\sigma^2$ satisfies the non-degeneracy condition \eqref{non-degenerate-variance}.
Then
	\begin{equation} 
		\max_{x \in [n]} \Big| \cQ_{x,t_{\lambda}}(\theta)- \frac{1}{\sqrt{2\pi}}\int_{\lambda}^\infty e^{-u^2/2}\,du \Big| \pconv 0.
	\end{equation}
\end{theorem}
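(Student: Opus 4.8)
The plan is to adapt the strategy of Theorem~\ref{thm_LLN} almost verbatim, replacing the weak-law estimate on $q_t(\theta)$ by a central-limit estimate. The starting point is the very same reduction: for $\ell=3\log\log n$, set $\bar{\cQ}_{x,t_\lambda}(\theta):=\sum_{y\in[n]}P^\ell(x,y)\cQ_{y,t_\lambda}(\theta)$, so that $\ell$ steps suffice (by Lemma~\ref{lemmaveps}) to push the walk into $\veps$ \whp while only perturbing the recorded mass by a factor $(C\log n)^{\pm 2\ell}$, i.e.\ $\log\theta$ by $O((\log\log n)^2)$. The key claim to prove is the analogue of \eqref{bar}, namely $\max_{x\in\veps}|\bar{\cQ}_{x,t_\lambda}(\theta)-q_{t_\lambda}(\theta)|\xrightarrow{\p}0$, and its proof is identical to the one given for Theorem~\ref{thm_LLN}: the $K$-th moment bound via the annealed $K$-walk process, Markov's inequality, and the $B_j\mid B_{j-1}$ recursion, all of which are insensitive to whether we are in the law-of-large-numbers or the fluctuation regime. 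So the structurally new input is purely about the i.i.d.\ quantity $q_{t_\lambda}(\theta)=\p(S_{t_\lambda}<-\log\theta)$.

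First I would establish the Gaussian approximation for $q_{t_\lambda}(\theta)$. By \eqref{q-secondo}, $q_{t_\lambda}(\theta)=\p\big((S_{t_\lambda}-\entropy t_\lambda)/(\sigma\sqrt{t_\lambda})<(-\log\theta-\entropy t_\lambda)/(\sigma\sqrt{t_\lambda})\big)$, and by hypothesis \eqref{lambda} the right endpoint converges to $-\lambda$; since $S_{t_\lambda}$ is a sum of $t_\lambda=\Theta(\log n/\log\log n)$ i.i.d.\ terms $L_k=\log(D_k\vee 1)$, a central limit theorem gives $q_{t_\lambda}(\theta)\to\Phi(-\lambda)=\frac{1}{\sqrt{2\pi}}\int_\lambda^\infty e^{-u^2/2}\,du$. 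The subtlety is that the summands $L_k$ depend on $n$ (they are bounded by $\log(\maxdeg\vee1)=O(\log\log n)$ on $\event$) and the number of summands grows only like $\log n/\log\log n$, so one needs a triangular-array CLT with a Lyapunov- or Berry–Esseen-type control. This is exactly where the non-degeneracy assumption \eqref{non-degenerate-variance} enters: with $L_k$ essentially bounded by $c\log\log n$, the Lyapunov ratio of order $\delta$ is controlled by $t_\lambda\,\E|L_1-\entropy|^{2+\delta}/(\sigma^2 t_\lambda)^{1+\delta/2}\lesssim (\log\log n)^\delta/(\sigma^2 t_\lambda)^{\delta/2}$, and plugging $t_\lambda\asymp\log n/\log\log n$ together with \eqref{non-degenerate-variance} makes this $o(1)$. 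I would also need to handle the rare atom from $D_k=0$ (here $L_k=0$), but $\p(D_V^+=0)=o(1)$ uniformly by Lemma~\ref{C}, so it contributes negligibly to mean, variance, and the Lyapunov moment; more carefully, one couples $S_{t_\lambda}$ with the sum of the corresponding truncated/conditioned variables at total-variation cost $t_\lambda\cdot o(1/n)$.

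Next I would propagate the stability of $q_{t_\lambda}(\cdot)$ under the poly-log perturbations of $\theta$ coming from the $\ell$-step burn-in and from the tree-error corrections of order $o(\mumax)$. The point is that replacing $\theta$ by $\theta'$ with $|\log\theta'-\log\theta|=O((\log\log n)^2)$ shifts the recentered/rescaled threshold by $O((\log\log n)^2/(\sigma\sqrt{t_\lambda}))$, which is $o(1)$ again precisely because $\sigma^2 t_\lambda\gg(\log\log n)^4$ under \eqref{non-degenerate-variance} (for $\delta\to\infty$ this is the borderline $(\log\log n)^3/\log n$ condition of \cite{BCS18}, matching the remark after \eqref{non-degenerate-variance}). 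Hence $q_{t_\lambda}(\theta2^\ell(C\log n)^{-2\ell})$ and $q_{t_\lambda}(\theta(C\log n)^{2\ell})$ both converge to the same limit $\Phi(-\lambda)$, and sandwiching $\max_{x\in[n]}\cQ_{x,t_\lambda}(\theta)$ and $\min_{x\in[n]}\cQ_{x,t_\lambda}(\theta)$ between these, exactly as in \eqref{3.8} and \eqref{3.10}, yields the claim.

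The main obstacle I anticipate is making the triangular-array central limit theorem quantitative enough: one must verify that the Lindeberg/Lyapunov condition holds with the explicit sub-logarithmic bounds on the $L_k$ and the slowly-growing number $t_\lambda$ of summands, and this is the precise reason the non-degeneracy hypothesis \eqref{non-degenerate-variance} is stated the way it is — it is calibrated so that both the CLT error and the $O((\log\log n)^2)$-perturbation error in $\log\theta$ are $o(\sigma\sqrt{t_\lambda})$. Everything else — the reduction to $\veps$, the $K$-moment annealed argument for \eqref{bar}, the decreasing-monotonicity sandwich — is a direct transcription of the proof of Theorem~\ref{thm_LLN} and requires no new idea.
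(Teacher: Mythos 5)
Your proposal is correct and follows essentially the same route as the paper: reduce to the i.i.d.\ quantity $q_{t_\lambda}(\theta)$ via the convergence \eqref{bar} and the sandwich bounds \eqref{3.8}--\eqref{3.10}, establish $q_{t_\lambda}(\theta)\to\Phi(-\lambda)$ by verifying the Lyapunov condition of the Lindeberg--Feller CLT, and absorb the $O((\log\log n)^2)$ burn-in perturbation of $\log\theta$ using the non-degeneracy of $\sigma^2$. The only differences are cosmetic: the paper simply reuses \eqref{bar} (already proved for all $t=\Theta(\tent)$, which includes $t_\lambda$) rather than re-deriving it; your sharper bound $\E|L_1-\entropy|^{2+\delta}\lesssim\sigma^2(\log\log n)^\delta$ is valid and in fact slightly better than the paper's direct estimate $\E[L_1^{2+\delta}]=\Theta((\log\log n)^{2+\delta})$; and the $D_k=0$ atom needs no separate treatment because the definitions \eqref{entropy}--\eqref{variance-sigma} already build in the truncation $D_V^+\vee 1$.
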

Note that, since $t_{\lambda}= \tent(1+o(1))$ and $\entropy t_{\lambda}=\log n +\lambda \sigma \sqrt{\tent}$, 
the assumption \eqref{lambda} implies that $\log \theta=-\log n(1+o(1))$. 
A possible choice could be $\theta=n^{-1}$, with possible multiplicative poly-log corrections.

\begin{proof}
To ease the notation, let $t=t_{\lambda}$.
In view of the convergence \eqref{bar}, we first focus  on the probability $q_t(\theta)$. By Eq.~\eqref{q-secondo}, we can write
	\begin{equation}
		\begin{split}
			q_t(\theta)= \p\left(\frac{S_t- \entropy t}{\sigma\sqrt{t}} 
			< -\frac{\log(\theta)+\entropy t}{\sigma\sqrt{t}}\right)\,.
		\end{split}
	\end{equation}
	Looking at the argument of that probability, while the \rhs converges to $-\lambda$ due to assumption \eqref{lambda},
	we will prove that the \lhs converges in distribution to a Normal. 
	We can indeed check that the Lyapunov condition of the Lindeberg-Feller Central Limit Theorem 
	holds (see, e.g., \cite{klenke}, Lemma 15.41).
	Specifically, we need to prove  that there exists $\delta>0$ such that 
	\begin{equation} \label{quotient}
		\lim_{n \to + \infty}\frac{1}{\textup{Var}(S_{t})^{1+\delta/2}} 	\sum_{k=1}^{t}\E[{|L_k-\entropy|}^{2+\delta}]=0\,.
	\end{equation}
We observe that, due to Lemmas \ref{C} and \ref{c}, and by our choice of $t$, for all $\delta>0$,
	\begin{equation}
		\sum_{k=1}^t\E[{L_k}^{2+\delta}] = t\E[{L_1}^{2+\delta}] 
		= \log{n} (\log \log n)^{1+\delta}(1+o(1))\,.
	\end{equation} 
Using that 
$\E[{|L_k-\entropy|}^{2+\delta}]\le 2^{1+\delta}\left(
\E[{|L_k|}^{2+\delta}]+ \entropy^{2+\delta}\right)$,
and thanks to Proposition \ref{prop-entropy}, we then get that
the numerator of \eqref{quotient} is  $O(\log{n} (\log \log n)^{1+\delta})$.

On the other hand, let $\delta>0$ be such that the non-degeneracy condition \eqref{non-degenerate-variance} on $\sigma^2$ is satisfied. Then
\begin{equation} \label{1}
		\var(S_t)^{1+\delta/2}= (t \sigma^2)^{1+\delta/2}\gg \log{n} (\log \log n)^{1+\delta}\,,
\end{equation}
and the 	Lyapunov condition \eqref{quotient} is  verified.
As a consequence, 	
\begin{equation} \label{gaussian}
		\lim_{n \to +\infty} q_t(\theta) = \frac{1}{\sqrt{2\pi}}\int_{-\infty}^{-\lambda} e^{-\frac{u^2}{2}}\,du 
		= \frac{1}{\sqrt{2\pi}}\int^{+\infty}_{\lambda} e^{-\frac{u^2}{2}}\,du.
	\end{equation}
The thesis now follows thanks to the convergence \eqref{bar}, together with the bounds \eqref{3.8} and \eqref{3.10}, and to the fact that  the asymptotic value of $q_t$ is not sensitive to perturbations 
$\theta'$ such that $|\log\theta'-\log\theta|=O((\log \log n)^2)$.
\end{proof}
	
\section{Proofs}

\subsection{Tree-like trajectories} 
The goal of this section is to analyse the random kernel of the random walk in order to prove 
that the properties characterizing nice paths, listed in Definition~\ref{def}, hold \whp as $n\to\infty$.
We will first show that, for all times $s\le (1-\gamma)\tent$, where $\gamma =\frac{\eps}{80}$ as in Definition \ref{def}, the random walk trajectories of length $s$ live \whp in the tree $\mathcal{T}_x(s)$ given in Subsection \ref{tree-constr}.
Accordingly to Remark \ref{rem-gen-tree}, this result can be extended with few little adjustments
to times $s=t_\lambda- h_\eps$, with $t_\lambda$ lying in the critical window of Theorem \ref{thm_window}. 
We will briefly comment at the end of the subsection.\\

We start with a preliminary result.
Recall the  notation introduced in Subsection \ref{sec_trees} and the procedure
to construct the tree $\mathcal{T}_x(s)\subset \mathcal{G}_x(s)$, which involves the sequences  of graphs 
$(\mathcal{G}^\ell)_{\ell \ge 0}$ and $(\mathcal{T}^\ell)_{\ell \ge 0}$, and the sequence of edges 
$(e_{\ell})_{\ell \ge 0}$. 
In particular, remind that
$\mathcal{T}_x(s):= \mathcal{T}^{\kappa_x}$, 
where $\kappa_x$ is the index of the last iteration of the algorithm. 
\begin{lemma}\label{facts} 
For all $1\le \ell\leq \kappa_x$, let $e_{\ell}$ denote the edge chosen by the $\ell$-th iteration 
of the algorithm defining $\mathcal{T}_x(s)$. Then, on the event $\event$, it holds that
		\begin{equation} \label{fact-a}
			e^{-\overh s}\le \hat{\mass}(e_{\ell}) \le \frac{2}{2+\ell}\, ,
		\end{equation}
	where $\hat{\mass}(e_{\ell})$ was given in \eqref{mass-hat}, and $\overh = (1+\gamma)\entropy$. 
		As a consequence, $\kappa_x \le 2 e^{\overh s}$.
\end{lemma}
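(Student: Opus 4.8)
The plan is to unwind the two bounds in \eqref{fact-a} directly from the construction of the tree in Subsection \ref{tree-constr}. The lower bound $e^{-\overh s}\le\hat{\mass}(e_\ell)$ is immediate: by step (2)(b) of the algorithm, an edge is chosen at iteration $\ell$ only if $\hat{\mass}(e_\ell)\ge e^{-\overh s}$, so the lower bound is forced by the very criterion that selects $e_\ell$. (When no such edge exists the procedure stops, which is exactly why the inequality holds for all $\ell\le\kappa_x$.) The substance of the lemma is the upper bound $\hat{\mass}(e_\ell)\le \frac{2}{2+\ell}$, and the consequence $\kappa_x\le 2e^{\overh s}$, which then follows by combining the two inequalities: at $\ell=\kappa_x$ we get $e^{-\overh s}\le \hat{\mass}(e_{\kappa_x})\le \frac{2}{2+\kappa_x}$, hence $2+\kappa_x\le 2e^{\overh s}$, i.e. $\kappa_x\le 2e^{\overh s}$.

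For the upper bound I would argue by a counting/mass-accounting argument on the event $\event$, where $\mindeg\ge 2$. The key observation is that $\hat{\mass}(e)=\mass(\pat_{x,v_e^-})\frac{1}{D^+_{v_e^-}}$ is the probability that the walk traverses the tree-path to $v_e^-$ and then crosses $e$; on $\event$ every out-degree encountered is at least $2$, so each time the algorithm has picked $e_\ell$, the "sibling" edges out of $v_{e_\ell}^-$ (there is at least one more, since $D^+_{v_{e_\ell}^-}\ge 2$) carry cumulative mass at least as large as $\hat{\mass}(e_\ell)$ each — because at the moment $e_\ell$ was selected it had \emph{maximal} $\hat{\mass}$ among all available edges, and those siblings were available. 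More precisely, I would track the total mass $\sum_{e\in\mathcal{E}^\ell}\hat{\mass}(e)$ of the current "frontier": initially (from the root, with $D^+_x\ge 2$) this is at most $1$, and one shows that adding $e_\ell$ to the tree replaces one frontier edge of mass $\hat{\mass}(e_\ell)$ by its children, whose total mass is $\hat{\mass}(e_\ell)$ again (the children of $v_{e_\ell}$ partition the mass $\hat{\mass}(e_\ell)$ among $D^+_{v_{e_\ell}}$ edges), so the frontier mass never increases and stays $\le 1$. Since after $\ell$ iterations there are at least, say, $\ell$ edges still on the frontier each of mass $\ge \hat{\mass}(e_{\ell+1})\ge\ldots$ — this is where monotonicity of the chosen masses matters — I can bound below the remaining frontier mass by a quantity like $(\ell+2)\hat{\mass}(e_\ell)/2$, forcing $\hat{\mass}(e_\ell)\le \frac{2}{\ell+2}$. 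The factor $2$ and the shift by $2$ come precisely from the $\mindeg\ge 2$ bound and from the root contributing an initial batch of $\ge 2$ edges.

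The step I expect to be the main obstacle is making the "frontier mass accounting" fully rigorous, in particular checking that the sequence $\ell\mapsto\hat{\mass}(e_\ell)$ is non-increasing (so that all edges chosen after step $\ell$, as well as the unexplored siblings still waiting on the frontier, have mass $\le\hat{\mass}(e_\ell)$, while the currently-available ones have mass $\ge$ the next chosen one). One must be careful that the distance constraint (2)(a) — $v_{e_\ell}^-$ at distance $\le s-1$ — does not break this monotonicity; the point is that edges excluded by the depth constraint are simply never eligible and so do not contribute to the frontier sum being bounded, only strengthening the inequality. Once monotonicity and the invariant "frontier mass $\le 1$" are in hand, the arithmetic yielding $\frac{2}{2+\ell}$ and then $\kappa_x\le 2e^{\overh s}$ is routine. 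I would also remark that this mirrors \cite[Sect.~3.2]{CCPQ}, so the argument there can be cited for the parts that transfer verbatim.
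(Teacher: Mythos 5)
The paper's proof of Lemma~\ref{facts} is a one-line reference to \cite[Lemma~11]{BCS18}, so you are not competing with a written argument in the paper and your plan to lean on the analogous step in \cite{CCPQ} is in exactly the same spirit. Your treatment of the lower bound (forced by step~(2)(b) of the construction) and of the consequence $\kappa_x\le 2e^{\overh s}$ (combining the two inequalities at $\ell=\kappa_x$) is correct and is indeed all that needs to be said there. You have also correctly isolated the two ingredients that make the upper bound work: monotonicity of $\ell\mapsto\hat\mass(e_\ell)$ (each newly exposed child edge has mass at most $\hat\mass(e_\ell)/D^+_{v_{e_\ell}^+}\le\hat\mass(e_\ell)/2$, so the greedy maximum cannot increase) and the degree bound $\mindeg\ge2$ on the event $\event$.

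Where your sketch goes wrong is the final accounting step. You assert that after $\ell$ iterations there are ``at least $\ell$ edges still on the frontier each of mass $\ge\hat\mass(e_{\ell+1})$'', and from this you want to lower-bound the frontier mass by roughly $(\ell+2)\hat\mass(e_\ell)/2$. This does not hold: by the greedy rule, $e_{\ell+1}$ is chosen as the \emph{maximal}-mass eligible edge in $\mathcal{E}^{\ell+1}$, so all other eligible frontier edges have mass $\le\hat\mass(e_{\ell+1})$, with equality only for ties (siblings of $e_{\ell+1}$). The overwhelming majority of frontier edges will typically have strictly smaller mass. Even if your count were granted, $\ell\,\hat\mass(e_{\ell+1})\le\ell\,\hat\mass(e_\ell)$, which is not $\ge(\ell+2)\hat\mass(e_\ell)/2$ for $\ell\ge2$, so the chain of inequalities does not close. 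What the correct argument in \cite[Lemma~11]{BCS18} actually exploits is a different comparison: one tracks the masses of \emph{already chosen} edges (all $\ge\hat\mass(e_\ell)$ by monotonicity) together with the mass arriving at the sibling(s) of $e_\ell$ at its tail, and plays this against the fact that, whenever a head is new, the frontier mass is conserved and hence bounded by $1$ for all $\ell$. Rephrased, the pigeonhole is against the sum $\sum_{i\le\ell}\hat\mass(e_i)$ plus the sibling contribution, not against the current frontier alone. You should either carry out that bookkeeping carefully, or — as the paper does — simply cite \cite[Lemma~11]{BCS18} and note that the only feature of the graph used is $\mindeg\ge 2$ on $\event$, which holds here by Lemma~\ref{C}.
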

\begin{proof}
See the proof of \cite[Lemma 11]{BCS18}, which applies to the present setting without substantial changes.
\end{proof}

With this result at hand, we can prove the following proposition, which shows that, w.h.p., a random walk starting 
from a vertex in  $V_\eps$ performs a trajectory in $\mathcal{T}_x(s)$.
To state the result, let us denote by $\mathcal{P}(x,y,s,H)$  the set of paths from $x$ to $y$ of length $s$, in a subgraph $H$ of $G$
. 
\begin{prop} \label{tree}
	For all $\eps$, $\gamma \in (0,1)$  and $s\le(1-\gamma)\tent$, it holds
	\begin{equation}
		\min_{x \in \veps}\left(
		\sum_{y \in [n]} \sum_{\pat \in \mathcal{P}(x,y,s,\mathcal{T}_x(s))} \mass(\pat)
		\right) \pconv 1.
	\end{equation}
\end{prop}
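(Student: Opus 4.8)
The plan is to show that the mass of random-walk paths that \emph{leave} the tree $\mathcal{T}_x(s)$ before time $s$ is negligible, uniformly over $x\in\veps$. Write, for a fixed realization $G$ (restricted to the typical event $\event$ of Lemma~\ref{C}, which costs only $o(1)$ in probability),
\begin{equation}
1-\sum_{y\in[n]}\sum_{\pat\in\mathcal{P}(x,y,s,\mathcal{T}_x(s))}\mass(\pat)
=\quench_x\bigl(\text{the walk up to time }s\text{ is not a path in }\mathcal{T}_x(s)\bigr).
\end{equation}
A trajectory $(X_0,\dots,X_s)$ fails to lie in $\mathcal{T}_x(s)$ precisely when it uses, at some step, an edge $e$ with tail in $\mathcal{G}^{\kappa_x}$ that the algorithm \emph{rejected}; by the stopping rule, such an edge is rejected only because either (a) its cumulative mass $\hat{\mass}(e)<e^{-\overh s}$, or (b) its tail sits at distance $\geq s$ from $x$ — but (b) cannot occur before time $s$ along a length-$s$ path, so only (a) matters (and possibly the edge closing a cycle, which is excluded on $\event\cap(\mathcal{G}^+(2h_\eps))^c$ when $x\in\veps$ since then $\mathcal{B}^+_x(h_\eps)$ is a tree — here I will also invoke Lemma~\ref{lemmaveps}/the definition of $\veps$ to keep the out-neighbourhood tree-like up to the relevant depth). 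Hence the event of leaving the tree is contained in the event that the walk ever traverses an edge $e$ whose ancestral path $\pat_{x,v_e^-}$ together with $e$ has mass at most $e^{-\overh s}$; and since the walk realizes a path of length exactly $s$, the relevant quantity to bound is $\quench_x(\mass(X_0,\dots,X_s)\le e^{-\overh s})$, or rather a union over the at most $\kappa_x\le 2e^{\overh s}$ reject-edges, each weighted by its own cumulative mass $\le e^{-\overh s}$.

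The heart of the argument is therefore a \emph{first-moment / Markov estimate}: the quenched probability that the walk ever steps onto a rejected edge is at most
\begin{equation}
\sum_{\ell:\,e_\ell\text{ rejected}}\hat{\mass}(e_\ell)\le \kappa_x\cdot e^{-\overh s}\le 2e^{\overh s}\cdot e^{-\overh s}\cdot\Bigl(\tfrac{\text{extra factor}}{}\Bigr),
\end{equation}
which at first glance only gives $O(1)$. To win the extra power of $n$ I would instead compare $\overh s=(1+\gamma)\entropy\cdot(1-\gamma)\tent=(1-\gamma^2)\log n(1+o(1))$ against the \emph{true} typical mass decay rate $\entropy s=(1-\gamma)\log n(1+o(1))$: an edge is rejected when its cumulative mass falls below $e^{-\overh s}=n^{-(1-\gamma^2)(1+o(1))}$, which is a much smaller threshold than the typical path mass $e^{-\entropy s}=n^{-(1-\gamma)(1+o(1))}$. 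So I will bound $\quench_x(\text{leave tree})$ by the probability that a random-walk path of length at most $s$ has mass $\le e^{-\overh s}$, and show this is $o_{\p}(1)$ uniformly in $x\in\veps$ using Theorem~\ref{thm_LLN}: take $\theta=e^{-\overh s}$, so that $-\log\theta/(\entropy s)\to (1+\gamma)(1-\gamma)=1-\gamma^2<1$, hence $\max_x \cQ_{x,s}(\theta)\xrightarrow{\p}0$. Combined with the $\kappa_x\le 2e^{\overh s}$ bound and a careful accounting of how many distinct reject-edges a length-$s$ trajectory can encounter (at most $s$, one per step), and using that each carries cumulative mass $\le e^{-\overh s}$ while the number of reject-edges reachable is $\le \kappa_x$, a union bound of the form $\kappa_x \cdot e^{-\overh s}\cdot(\text{const})$ still looks like it only gives $O(1)$, so the cleaner route really is the $\cQ$-comparison: partition $\quench_x(\text{leave tree})$ according to the last time $r\le s$ the walk is still inside $\mathcal{T}_x(r)$, note the mass accumulated up to that step is $\ge$ the cumulative mass of the rejected edge used at step $r+1$, i.e.\ $\le e^{-\overh r}\le e^{-\overh s}$ — no, rather $\ge e^{-\overh r}$ by the algorithm's greedy choice on surviving edges... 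I will make this precise by the dichotomy: either the walk's mass profile stays $\ge e^{-\overh s}$ (then it stays in the tree, by construction $\mathcal{G}_x(s)$ contains all paths of mass $\ge e^{-\overh s}$ and length $\le s$), or it drops below $e^{-\overh s}$ at some step $\le s$, and the latter event has quenched probability $\le \cQ_{x,s}(e^{-\overh s})$ up to the $\kappa_x$-factor which is harmless after taking expectations and invoking the $\p$-convergence.

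The main obstacle — and the step I expect to require the most care — is precisely this last reduction: justifying cleanly that \{walk leaves $\mathcal{T}_x(s)$ by time $s$\} $\subseteq$ \{walk's length-$(\le s)$ prefix has mass $\le e^{-\overh s}$\} (using the stated fact that $\mathcal{G}_x(s)$ is generated by \emph{all} paths of mass $\ge e^{-\overh s}$ and length $\le s$, so that any path avoiding this set must drop below the threshold), and then controlling the resulting quantity uniformly in $x$. For $x\in\veps$ the out-neighbourhood is a genuine tree up to depth $h_\eps$, and more generally on $(\mathcal{G}^+(2h_\eps))^c$ up to depth $2h_\eps$; for the remaining depth up to $s\approx(1-\gamma)\tent$ I would either extend the tree-excess control (Lemma~\ref{G+} with a larger radius, at the cost of a slightly smaller admissible $\eps$) or absorb the rare cycle-closing events into the $o_{\p}(1)$ term. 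Modulo this bookkeeping, the conclusion
\begin{equation}
\min_{x\in\veps}\Bigl(\sum_{y\in[n]}\sum_{\pat\in\mathcal{P}(x,y,s,\mathcal{T}_x(s))}\mass(\pat)\Bigr)
\ge 1-\max_{x\in\veps}\cQ_{x,s}(e^{-\overh s})-o_{\p}(1)\pconv 1
\end{equation}
follows from Theorem~\ref{thm_LLN}(i) applied with $\rho=1-\gamma^2<1$, together with Lemmas~\ref{C}, \ref{G+} and \ref{facts}.
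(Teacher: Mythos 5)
Your decomposition of escape events — a length-$s$ path leaves $\mathcal{T}_x(s)$ either because its mass drops below $e^{-\overh s}$ or because it traverses an edge of $\mathcal{G}_x(s)\setminus\mathcal{T}_x(s)$ — is exactly the paper's, and the plan to control the low-mass part through $\cQ_{x,s}(\cdot)$ and Theorem~\ref{thm_LLN} is correct in spirit. However, as written the proposal contains two substantive errors.

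First, the invocation of Theorem~\ref{thm_LLN} is incorrect. The ratio in \eqref{limitrho} is $-\log\theta/(\entropy t)$ with $t=s$, not with $t=\tent$: for $\theta=e^{-\overh s}$ one gets
\[
\rho=\frac{\overh s}{\entropy s}=\frac{\overh}{\entropy}=1+\gamma>1,
\]
whereas you divided by $\entropy\tent=\log n$ and obtained the spurious $(1+\gamma)(1-\gamma)=1-\gamma^2<1$. The applicable case is therefore part~(ii), giving $\min_x\cQ_{x,s}(e^{-\overh s})\to 1$ in $\p$-probability. Compounding this, the low-mass escape event $\{\mass(X_0,\ldots,X_s)\le e^{-\overh s}\}$ has quenched probability $1-\cQ_{x,s}(e^{-\overh s})$, not $\cQ_{x,s}(e^{-\overh s})$; you state the former but then write the latter in your final display. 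The correct chain is: tree-mass $\ge \cQ_{x,s}(e^{-\overh s})-o_{\p}(1)\to 1$ by part~(ii) with $\rho=1+\gamma$. Your proposal reaches the right endpoint only because a sign confusion is stacked on the wrong value of $\rho$; that is not a proof.

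Second, and more fundamentally, the contribution of $\mathcal{G}_x(s)\setminus\mathcal{T}_x(s)$ cannot be handled by tree-excess control. Membership in $\veps$ and Lemma~\ref{G+} give tree structure only up to depth $O(h_\eps)$, while $s\approx(1-\gamma)\tent\gg h_\eps$ and, by Lemma~\ref{facts}, $\kappa_x\le 2e^{\overh s}\approx n^{1-\gamma^2}$. At that scale, collisions are generic; an analogue of Lemma~\ref{G+} at radius $s$ is simply false, and ``absorbing'' these events into $o_{\p}(1)$ has no justification. The paper treats this with a genuinely separate argument: it tracks the increasing process $M_\ell$ recording the mass of edges rejected because their head already lies in $V(\mathcal{G}^{\ell-1})$, bounds the conditional increments $\E[(M_\ell-M_{\ell-1})\one_{\event}\mid\mathcal{F}_{\ell-1}]$ by $\frac{2}{2+\ell}\cdot o(\log n\,/\,n^{\xi})$ via the H\"older estimate \eqref{hoelder} on $\sum_{z\in V(\mathcal{G}^{\ell-1})}w_z^-$, and applies Freedman's martingale inequality to obtain $\p(M_{\kappa_x}>\delta,\,\event)=o(n^{-1})$ before a union bound over $x$. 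This martingale step is the technical core of Proposition~\ref{tree} and is entirely missing from the proposal.
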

\begin{proof}
Note that,  by the definition of $\mathcal{T}_x(s)$, a path $\pat\in \mathcal{P}(x,y,s,G)$ 
is not in $\mathcal{P}(x,y,s,\mathcal{T}_x(s))$  if one of the two following conditions holds:
\begin{enumerate}
	\item[(1)] 
	 $\mass(\pat)\le e^{-\overh s}= 1/n^{1-\gamma^2}(1+o(1))$.
	\item[(2)] 
	$\pat$ has edges in $\mathcal{G}_x(s)\setminus \mathcal{T}_x(s)$.
\end{enumerate}
For $j=1,2$, we denote with $\mathcal{P}^{j,*}_{x,y}$ the set of paths in $\mathcal{P}(x,y,s,G)$ 
for which condition $(j)$ does not hold, and observe that by definition 
$$\sum_{y \in [n]} \sum_{{\pat \in \mathcal{P}^{1,*}_{x,y}}} \mass(\pat)\ge \mathcal Q_{x,s} (1/n^{1-\gamma^2})\,,\quad \forall x\in[n]\,.$$
Since $ \frac{\overh s}{\entropy s} = 1+\gamma > 1$, Theorem~\ref{thm_LLN}(ii) applies
and  we get that 
\begin{equation}\label{conv}
\min_{x \in [n]} \Bigg\{\sum_{y \in [n]} \sum_{{\pat \in \mathcal{P}^{1,*}_{x,y}}} \mass(\pat) \Bigg\}\pconv 1\,,
\end{equation}
which  proves that condition $(1)$ is not likely to be satisfied. 
To estimate the probability that condition (2) is  satisfied, let us define iteratively $(M_\ell)_{\ell=0}^{\kappa_x}$ setting
\begin{equation}
M_0:=0\,,\qquad	
M_\ell:=M_{\ell-1}+\hat{\mass}(e_{\ell})\one(\ell\le \kappa_x)
\one(v_{e_{\ell}}^+\in V(\mathcal{G}^{\ell-1})), 
\quad \forall \ell\in\{1,\ldots, \kappa_x\} \,,
\end{equation}
where $V(H)$ denotes the vertex set of a graph $H$ and $v_{e}^+$ denotes 
the head of an edge $e$.
Note that $M_\ell$ represents the total probability mass that is excluded from $\mathcal{G}^\ell$ in the generation of $\mathcal{T}^\ell$. We recall that $e_{\ell}$ is the edge selected by the $\ell$-th iteration of the algorithm.
In particular
	\begin{equation}
		M_{\kappa_x}=\sum_{y \in [n]}\sum_{\pat \in \mathcal{P}^{2,*}_{x,y}} \mass(\pat).
	\end{equation}
We want to show that, for all $\delta>0$,
\begin{equation}  \label{thesis}
	\p(\exists x \in V_\eps : M_{\kappa_x}>\delta)=o(1).
\end{equation}
To this aim, we first prove
\begin{equation}  \label{M-delta}
\p(M_{\kappa_x}>\delta,\event)=o(n^{-1}),
\end{equation}
so that Lemma \ref{C} and a union bound over $x \in \veps$ are sufficient to conclude the proof.  

Let $\ell_\eps= 2^{h_\eps} $. Remember that condition $(1)$ above is satisfied with vanishing probability for $s=h_\eps=\Theta(\tent)$ and observe that $(\hat{\mass}(e_\ell))_{\ell\ge 0}$ is decreasing in $\ell$. Moreover notice that, being $x \in V_\eps$, $\mathcal{T}_x(h_\eps)$ is a tree. Combining these facts, it follows that \whp $\mathcal{T}_x(h_\eps)=\mathcal{G}_x(h_\eps)=\mathcal{B}_x^+(h_\eps)$.
In conclusion, \whp, $\kappa_x=|\mathcal{T}_{\kappa_x}|\ge|\mathcal{B}_x^+(h_\eps)|\ge 2^{h_\eps}=\ell_\eps$.

As a by-product of the previous lines, we get that in the first $\ell_\eps$ steps of the construction of $\mathcal{T}_{\kappa_x}$, no mass is thrown away and then $M_\ell=0$ for all $\ell\le \ell_\eps$.
Moreover, due to \eqref{fact-a}, on the event $\event$
\begin{equation}	\label{Ml}
		M_\ell-M_{\ell-1}\le \frac{2}{2+\ell_\eps}
\le 2^{-h_\eps+1}\le 1 \,,\qquad \forall \ell \ge \ell_\eps+1\,.
\end{equation}
Let $\mathcal{F}_\ell$ denote the $\sigma$-field associated to the first $\ell$ 
generation steps of $\mathcal{T}_x(s)$. 
By the previous estimates, it turns out that
\begin{equation}\label{M2}
\E[(M_\ell-M_{\ell-1})\one_{\event} \,  | \,\mathcal{F}_{\ell-1}] 
\le 
\frac 2{2+\ell}\cdot\p(v_{e_{\ell}}^+\in V(\mathcal{G}^{\ell-1})\,,\,\event|\, \mathcal{F}_{\ell-1})\,,
\quad \p-a.s.\,,\qquad \forall \ell \ge \ell_\eps+1\,,
\end{equation}
where
\begin{equation} \label{conditional-p}
\p(v_{e_{\ell}}^+\in V(\mathcal{G}^{\ell-1})\,,\,\event |\, \mathcal{F}_{\ell-1})
\le
\max_{y\in V(\mathcal{G}^{\ell-1})}
\sum_{z \in V(\mathcal{G}^{\ell-1})} p_{y,z} \le 
M_1 \frac{\log n}{n} \sum_{z \in V(\mathcal{G}^{\ell-1})}w_z^-\,.
	\end{equation}
To estimate the r.h.s. of the display, note that, for any $S\subset [n]$
and taking $\zeta$ such that $6\zeta < \eta$, with $\eta$ as given 
in assumption \eqref{assumption}, 
we can apply H\"older's inequality and get, for $p=2+6\zeta$,
\begin{equation}\label{hoelder}
\sum_{z \in S} w_z^- \le \left[\sum_{z \in S} (w_z^-)^{p}\right]^{\frac1 {p}} |S|^{1-\frac 1{p}} 
\le \left[\frac{M_2n}{|S|}\right]^{\frac{1}{2+6\zeta}}|S|.
\end{equation}
We take $S=V(\mathcal{G}^{\ell-1})$ and observe that $\frac{1}{2+6\zeta}<\frac12 -\zeta$. 
Since $|V(\mathcal{G}^{\ell-1})|\le\kappa_x\le 2 n^{1-\gamma^2}$, 
where the last inequality is due to Lemma \ref{facts},
we obtain that
\begin{equation}
\sum_{z \in V(\mathcal{G}^{\ell-1})}w_z^- =o(n^{1 -\xi}),
\end{equation}
for $\xi>0$ sufficiently small, depending on the given $\zeta$ and $\gamma$. 
Inserting this estimate in \eqref{conditional-p} and then in \eqref{M2}, 
we conclude that, for any $\ell\le \kappa_{x}$,
$$
\E[(M_\ell-M_{\ell-1})\one_{\event}\, |\, \mathcal{F}_{\ell-1}]  = \frac{1}{\ell}\, o\left(\frac{\log n}{ n^{\xi}}\right)\,,
$$
and in a similar way that
$$
\E[\left(M_\ell-M_{\ell-1}\right)^2\one_{\event}\, |\, \mathcal{F}_{\ell-1}]  = 
\frac{1}{\ell^2}\, o\left(\frac{\log n}{ n^{\xi}}\right)\,.
$$
Consequently, 
	\begin{equation} \label{a}
		a:=\sum_{\ell=1}^{\kappa_x} \E[M_\ell-M_{\ell-1} \one_{\event}\,|\, \mathcal{F}_{\ell-1}]  
		= o\left(\frac{\log^2 n}{ n^{\xi}}\right)\,,
	\end{equation}
	\begin{equation} \label{b}
		b:=\sum_{\ell=1}^{\kappa_x} \E[\left(M_\ell-M_{\ell-1}\right)^2 \one_{\event}\,|\, \mathcal{F}_{\ell-1}] 
		=o\left(\frac{\log n}{ n^{\xi}}\right)\,,
	\end{equation}
	where we used the fact that  $\sum_{\ell=1}^{\kappa_x} \ell^{-1}=O(\log{\kappa_x})$. 
	For $\ell\in\{0,\ldots, \kappa_x\}$, we define  
	\begin{equation} \label{Z}
		Z_{\ell+1}:=\frac{c_\xi}{\delta}(M_{\ell+1}-M_{\ell}-\E[(M_{\ell+1}-M_{\ell})\one_{\event} | \mathcal{F}_{\ell}]) \one_{\event},
	\end{equation}
 where $c_\xi:=2/\xi+2$. Thanks to \eqref{Ml}, $|Z_{\ell+1}|\le 1$ for large $n$. 
  Since $\kappa_x \ge \ell_\eps$, we also define
	\begin{equation}
		\phi_u:=\sum_{i = \ell_\eps}^u Z_{i+1}\,,\qquad \forall u\in\{\ell_\eps,\ldots, \kappa_x\}\,.
	\end{equation}
The sequence $(\phi_u)_{\ell_\eps\le u \le \kappa_x}$ is a martingale.
Observe that $M_{\kappa_x}=a+ \frac\delta {c_\xi} \phi_{\kappa_x}$. 
Thanks to the estimates \eqref{a} and \eqref{b},  we can assume that
$a \le \frac{\delta}{c_\xi}$ for large enough $n$. 
Hence, recalling that $c_\xi-2=2/\xi$, we can write
		\begin{equation}\label{M-xi}
			\begin{split}
				\p\Big(M_{\kappa_x}\ge \frac {c_\xi-1}{c_\xi}\delta , \event\Big) 
				 \le 
				 \p\Big(\phi_\ell \ge \frac 2\xi \text{ for some }\ell 
				 \ge \ell_\eps\,,\, \event\Big)\,.
			\end{split}
		\end{equation} 
At last, let us consider the conditional variance $$b':=\sum_{i=1}^\ell \textup{Var}(Z_i | \mathcal{F}_{i}).$$
On $\event$, thanks to \eqref{b}-\eqref{Z}, 
$b'\le (c_\xi/\delta)^{2}b=o\left(\frac{\log n}{ n^{\xi}}\right)$ uniformly in $\ell$. 
Choosing $c(n)=\frac{\log n}{ n^{\xi}}$, for all $n\gg 1$ it holds
$$
\p\Big(b' > c(n)\text{ for some }\ell \ge \ell_\eps\,,\,\event \Big)=0 \,,
$$ 
and thus
\begin{equation}
\begin{split}
\p\Big(\phi_\ell \ge \frac 2\xi \text{ for some }\ell \ge \ell_\eps \,,\event \Big) 
=
\p\Big(\phi_\ell \ge \frac 2\xi, b' \le c(n) \text{ for some }\ell \ge \ell_\eps\,,\event \Big)\,.	\end{split}
\end{equation}
As in \cite[Lemma 3.3]{CCPQ}, we apply \cite[Theorem 1.6]{Freedman} to bound the \rhs 
with $$e^{\frac 2\xi}\left(\frac{c(n)}{\frac 2\xi+c(n)}\right)^{\frac 2\xi+c(n)}=o(n^{-1}).$$ 
Inserting this bound in \eqref{M-xi}, we obtain \eqref{M-delta} which concludes the proof.
\end{proof}
\begin{remark}\label{rem-fine}
The statement of this proposition can be easily generalized to time $s=t_\lambda- h_\eps$, with $t_\lambda$ 
lying in the critical window of Theorem \ref{thm_window}. 
Indeed, with this specific choice, it holds $s=(1-4\gamma)\tent(1+o(1))$ and $n^{1-4\gamma} \le e^{\overh s}\le n^{1-\gamma^2}$.
All the estimates involving $s$, and specifically Lemma \ref{facts} and the convergence \eqref{conv}, come true without substantial changes.
\end{remark}

\subsection{Proof of the upper bound on the mixing time (Eq. \eqref{main2} of Theorem \ref{thm_main})}\label{upper}

We start by rearranging in a more convenient form the total variation distance of the statement. 
For $h=h_\eps$ as in \eqref{h_eps}, let 
	\begin{equation} \label{tildaPi}
		\widetilde{\pi}:=\muin P^{h}\, ,
	\end{equation}   
and write, by the triangle inequality,
	\begin{equation}
		\|P^{t}(x,\cdot)-\pi\|_{\textup{TV}} 
		\le \|P^{t}(x,\cdot)-\widetilde{\pi}\|_{\textup{TV}}+\|\widetilde{\pi}-\pi\|_{\textup{TV}}.
	\end{equation}
If the first term in the \rhs is $o_{\p}(1)$ uniformly in $x \in [n]$, then by the triangle inequality and \eqref{triangle-ineq}, the same must hold for the second term. 
Let $t'=t+\ell$, with $\ell=\log \log n$ and $t=(1+\beta)\tent$.
 Applying the Markov property,
	\begin{equation}
		\begin{split}
		\max_{x \in [n]}\|P^{t'}(x,\cdot)-\widetilde{\pi}\|_{\textup{TV}} 
		&= \max_{x \in [n]}  \| \sum_{y \in [n]} P^{\ell}(x,y) (P^{t}(y,\cdot)-\widetilde{\pi}\, ) (\one_{\{y \in V_\eps\}}+\one_{\{y \notin V_\eps\}})\|_{\textup{TV}} \\
		&\le \max_{x \in [n]}  \frac 12 \sum_{z \in [n]} \sum_{y \in V_\eps}
		P^{\ell}(x,y) |P^{t}(y,z)-\widetilde{\pi}(z) |
		+\max_{x \in [n]}  \frac 12  \sum_{y \in [n]\setminus V_\eps} 2P^{\ell}(x,y)\\
		& \le \max_{y \in \veps} \|P^{t}(y,\cdot)-\widetilde{\pi}\|_{\textup{TV}} + \max_{x \in [n]} \quench_x(X_\ell \notin \veps), 
		\end{split}
	\end{equation}
	where the first inequality is obtained by the triangle inequality and bounding the total variation distance by $2$, while the second inequality is obtained by changing the order of the sums
and maximizing over $y$ (so that $x$ disappears).
	
	The second term is arbitrarily small due to Lemma \ref{lemmaveps}. 
	We then focus on the first term. For sake of readiness we will keep calling $x$ the maximizing variable and we will bound $\max_{x \in \veps} \|P^{t}(x,\cdot)-\widetilde{\pi}\|_{\textup{TV}}$.
	For every $x,y\in[n]$, let $\widetilde{P}^t(x,y)$ be the probability to go from $x$ to $y$ in $t$ steps 
	following a nice path, that is
\begin{equation}\label{Ptilde}
\widetilde{P}^t(x,y)= \sum_{\pat\in\widetilde{\mathcal{P}}(x,y,t,G)} \mass(\pat)\,,
\end{equation}	
where $\widetilde{\mathcal{P}}(x,y,t,G)$ is the set of nice paths from $x$ to $y$ of length $t$ in $G$. Moreover we set
\begin{equation}
\widetilde{q}(x):=1-\sum_{y \in [n]} \widetilde{P}^t(x,y)\,.
\end{equation}

	Then, for all $\delta>0$, it holds 	
	\begin{equation} \label{eq2}
			\begin{split}
				\|P^{t}(x,\cdot)-\widetilde{\pi}\|_{\textup{TV}}\le
				\sum_{y \in [n]}\left[(1+\delta)\widetilde{\pi}(y)+\frac \delta n-\widetilde{P}^t(x,y)\right]^+.
			\end{split}
	\end{equation}

To handle the term in the r.h.s. above, we apply Proposition \ref{drop-positive-part} below 
in order to remove the positive part $[u]^+=\max\{0,u\}$ in \eqref{eq2}.
From this statement, we get that for all $\delta>0$, and w.h.p., \eqref{eq2} becomes 
\begin{equation}\label{eq3}
\begin{split}
\sum_{y \in [n]}\left[(1+\delta)\widetilde{\pi}(y)+\frac \delta n-\widetilde{P}^t(x,y)\right]= 2\delta+\widetilde{q}(x),
\end{split}
\end{equation}
It is now sufficient to provide an upper bound on $\widetilde{q}(x)$, uniformly over $x\in \veps$. 
This can be derived by bounding above the probability that some  conditions in Definition \ref{def} fail.\\
Condition (i) fails, by definition, with quenched probability $\cQ_{x,t}\left(\frac 1 {n\log^3 n}\right)$, for all $x\in[n]$.\\
Condition (ii) holds with quenched probability $1-o_{\p}(1)$ for all $x \in \veps$, by Proposition~\ref{tree}.\\
Condition (iii) is satisfied with quenched probability bounded below by $\quench_x(X_{s+1} \in \veps)$.
Taking the minimum over $x\in V_\eps$, and thanks to Lemma \ref{lemmaveps}, 
we conclude that (iii) holds with quenched probability  $1-o_{\p}(1)$, uniformly for $x \in \veps$.\\
At last, condition (iv) holds w.h.p. for all $x \in [n]$ due to Lemma~\ref{C}.\\

In conclusion, 
\begin{equation}\label{qto0}
		\max_{x \in \veps}\widetilde{q}(x)\le \max_{x \in \veps}\cQ_{x,t}\left(\frac{1}{n\log^3 n}\right)+ o_{\p}(1)\,.
	\end{equation}
Note that for $\theta= \frac{1}{n\log^3 n}$ and $t= (1+\beta)\tent$
condition \eqref{limitrho} is satisfied with $\rho<1$. Hence, Theorem~\ref{thm_LLN}(i) applies 
to the \rhs in the last display, and ends the proof of \eqref{main2}. 
\qed \\

It now remains to state and prove the result that was applied in order to reduce \eqref{eq2} to \eqref{eq3}.
Set $\beta=3\gamma=\frac {3\eps} {80}$. In the notation introduced above, it holds the following.
\begin{prop} \label{drop-positive-part} 
	
Let $t = s + h + 1$ with $s = (1 - \gamma)\tent$, $\gamma > 0$ as in Definition \ref{def} and $h \equiv h_\eps$ as in \eqref{h_eps}.
Then, for all $\delta>0$, 
\begin{equation}
\p\left(\max_{x \in \veps}\widetilde{P}^t(x,y)
\le (1+\delta)\widetilde{\pi}(y)+\frac{\delta}{n},\forall y \in [n]\right)=1-o(1).
\end{equation}
\end{prop}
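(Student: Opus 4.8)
The plan is to cut a nice path at the split time $s$ and reduce the statement to a comparison, at generation $s+1$, between the walk's law and $\muin$. Write $z=x_s$, $w=x_{s+1}$; by (ii)--(iv) and on $\event$, the first $s$ steps are the unique path in $\mathcal T_x(s)$ from $x$ to $z$, of mass denoted $\mu^{(s)}_x(z)$; the step $z\to w$ has mass $1/D^+_z\in[(C\log n)^{-1},1/2]$; and the last $h_\eps$ steps are the unique directed path from $w$ to $y$ in $G$, of mass $\hat P^{h_\eps}(w,y)\le P^{h_\eps}(w,y)$, which vanishes unless $w\in\mathcal B^-_y(h_\eps)$. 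Since on $\event$ the suffix mass is at least $(C\log n)^{-h_\eps}$, property (i) forces $\mu^{(s)}_x(z)\le\theta_0:=(C\log n)^{h_\eps+1}/(n\log^3 n)\le n^{-1+\eps/10}$ for large $n$. Grouping nice paths by $(z,w)$ gives $\widetilde P^t(x,y)\le\sum_{w}\nu_x(w)\hat P^{h_\eps}(w,y)$ with $\nu_x(w):=\sum_{z}\mu^{(s)}_x(z)\,\mathbf 1\{z\to w,\ D^+_z\le C\log n,\ \mu^{(s)}_x(z)\le\theta_0\}/D^+_z$, a sub-probability vector. As $\widetilde\pi=\muin P^{h_\eps}$ satisfies $\widetilde\pi(y)\ge\sum_w\muin(w)\hat P^{h_\eps}(w,y)$, it suffices to prove that, w.h.p.\ and uniformly over $x\in\veps$,
\[
\sum_{w\in[n]}\bigl[\nu_x(w)-(1+\delta)\muin(w)\bigr]^+\hat P^{h_\eps}(w,y)\ \le\ \frac{\delta}{n}\qquad\text{for all }y\in[n].
\]

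The heart of the argument is the bound $\E[\nu_x(w)\mid\mathcal T_x(s)]\le(1+o(1))\muin(w)$. Conditioning on $\mathcal T_x(s)$ fixes $\mu^{(s)}_x(\cdot)$, and for a depth-$s$ leaf $z$ and a vertex $w$ outside $\mathcal G_x(s)$ the edge $\{z\to w\}$ and (up to a negligible number of already-inspected edges) the degree $D^+_z$ come from fresh randomness, independent of the edges carrying $\hat P^{h_\eps}(w,\cdot)$. By Remark~\ref{inverse-remark}, $\E[\mathbf 1\{z\to w\}/D^+_z\mid\mathcal T_x(s)]=p_{zw}(1+o(1))/\E[D^+_z]$, and the cancellation $p_{zw}/\E[D^+_z]=(w^+_zw^-_w\log n/n)/(w^+_z\w\log n/n)=\muin(w)$ is exactly why $\muin$ governs the dynamics; summing over $z$ and using $\sum_z\mu^{(s)}_x(z)\le1$ gives the claim, and a further expectation yields the annealed bound $\E[\widetilde P^t(x,y)]\le(1+o(1))\E[\widetilde\pi(y)]$.

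To make this uniform and high-probability, I would condition first on the portion of the environment within distance $h_\eps$ of $y$ (fixing $\widetilde\pi(y)$ and the vector $\hat P^{h_\eps}(\cdot,y)$) and then run, over the remaining randomness, the moment method from the proof of Theorem~\ref{thm_LLN}: for $K=\lfloor\log^2 n\rfloor$, bound $\E[\one_{x\in\veps}(\widetilde P^t(x,y))^K\mid\mathcal B^-_y(h_\eps)]$ by the annealed law of $K$ independent walks from $x$ (Subsection~\ref{annealed random walk}, $T=t$), each required to perform a nice path to $y$. Conditioning successively on the first $j-1$ walks and the common tree, the $j$-th walk either re-traces an explored prefix (conditional probability $\le K2^{-s}=o(1)$, using $\mindeg\ge2$) or explores fresh vertices; in the latter case the cycle-creating events cost $o(1)$ in total (as in Theorem~\ref{thm_LLN}, via $p_{\text{max}}=o(n^{-1/2})$ and Lemma~\ref{G+}) and the escape steps are, by Lemma~\ref{lemmaiid}, \iid $\muin$-samples up to $o(\mumax)$ corrections, so by the mean estimate the $j$-th factor is $\le(1+\delta)\widetilde\pi(y)+\delta/n$ up to $o(1)$. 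Thus $\E[\one_{x\in\veps}(\widetilde P^t(x,y))^K\mid\mathcal B^-_y(h_\eps)]\le\bigl((1+\delta)\widetilde\pi(y)+\delta/n\bigr)^K(1+o(1))^K$, so Markov's inequality gives, for any $\delta'>\delta$, $\p(\widetilde P^t(x,y)>(1+\delta')\widetilde\pi(y)+\delta'/n\mid\mathcal B^-_y(h_\eps))=O\bigl(((1+\delta)/(1+\delta'))^K\bigr)=o(n^{-2})$, and a union bound over $y\in[n]$ and $x\in\veps$ concludes. The key geometric input, from Lemma~\ref{size-in-ball}, is that on $\mathcal S^-_\eps$ the in-ball $\mathcal B^-_y(h_\eps)$ has at most $n^{1/2+\eps}$ vertices, while $|\mathcal T_x(s)|\le 2n^{1-\gamma^2}$ by Lemma~\ref{facts}, so the tree of $x$ lies outside the conditioned region for all but negligibly many pairs $(x,y)$.

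The step I expect to be the main obstacle is controlling the accumulated errors in the moment computation. Since $\widetilde\pi(y)$ is random and can sit well below $1/n$ for atypical $y$, a pointwise comparison $\nu_x(w)\le(1+\delta)\muin(w)$ is impossible --- it fails on the light vertices with $\muin(w)\lesssim n^{-1}$ --- so the residual contribution of light vertices has to be absorbed into the additive $\delta/n$ after passing through $\hat P^{h_\eps}$, and the $o(1)$ terms accumulated over the $K=\log^2 n$ walks and over the $n^2$ pairs $(x,y)$ must be kept subdominant; this requires carefully decoupling the $x$-tree, the $y$-in-neighbourhood and the fresh escape edges, which is precisely what Lemmas~\ref{C}, \ref{G+}, \ref{size-in-ball} and the bound $\theta_0\le n^{-1+\eps/10}$ are designed to supply.
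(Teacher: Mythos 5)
Your conditional-mean computation matches the paper's. Fixing the partial environment $\mathcal{F}$ containing both $\mathcal{T}_x(s)$ and $\mathcal{B}^-_y(h_\eps)$ so that $\widetilde{\pi}(y)$ is determined, and using the cancellation $p_{zv}/\E[D^+_z]=\muin(v)$ together with Remark~\ref{inverse-remark} and the bound $|\mathcal{B}^-_y(h_\eps)|\le n^{1/2+\eps}$ from Lemma~\ref{size-in-ball}, does yield $\E[\widetilde{P}^t(x,y)\mid\mathcal{F}]\le\widetilde{\pi}(y)(1+o(1))$. The genuine gap is exactly where you suspect, in the concentration step, and it is not minor. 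The moment method from the proof of Theorem~\ref{thm_LLN} is adequate there because $\bar{\cQ}_{x,t}(\theta)$ is of order $1$, so an \emph{additive} $o(1)$ error in each conditional factor $\p^{\textup{an},K}_x(B_j\mid B_{j-1})$ is harmless. Here $\widetilde{P}^t(x,y)$ and $\widetilde{\pi}(y)$ are both $\Theta(1/n)$, so each conditional factor must be bounded to \emph{multiplicative} precision $(1+o(1))$; the error terms you import from Theorem~\ref{thm_LLN} --- $K2^{-s}$ for the re-trace, $(t+\ell)^2 K p_{\text{max}}$ for cycle creation, $o(\mumax)$ for the coupling with the \iid $\muin$-samples --- are all far larger than $1/n$, so the claimed bound $\E[\one_{x\in\veps}(\widetilde{P}^t(x,y))^K\mid\mathcal{B}^-_y(h_\eps)]\le\bigl((1+\delta)\widetilde{\pi}(y)+\delta/n\bigr)^K(1+o(1))^K$ does not follow. (In addition, $K2^{-s}$ bounds only re-tracing for $s$ steps, not re-tracing \emph{and} still landing at $y$; what is actually needed there is the total mass of previously traced nice paths, which is $O(K/(n\log^3 n))$ and requires a separate argument.)

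The paper avoids this scaling difficulty by abandoning the moment method for this proposition. Conditionally on $\mathcal{F}$, it writes $\widetilde{P}^t(x,y)=\sum_{z\in V^+_{\mathcal F}}X_z$, where the $X_z$ are \emph{independent} (each depends on disjoint fresh edges $\one_{\{z\to v\}}$ out of the leaves $z$ of $\mathcal{T}_x(s)$) and, crucially, uniformly bounded by $M=C/(n\log^2 n)$ thanks to conditions (i) and (iv) of Definition~\ref{def}. Bernstein's inequality with this $M$ and with the conditional mean controlled as above gives
\begin{equation*}
\p\bigl(\widetilde{P}^t(x,y)\ge(1+\delta)\widetilde{\pi}(y)+\tfrac{\delta}{n}\mid\mathcal{F}\bigr)\le e^{-c(\delta)\log^2 n}=o(n^{-3}),
\end{equation*}
and a union bound over $(x,y)$ finishes. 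The structural input your outline is missing is precisely this uniform bound $X_z\le M$ together with the conditional independence, which places all the fluctuations at the $1/(n\log^2n)$ scale and makes the concentration automatic; without it, the bookkeeping of multiplicative $(1+o(1))$ errors across $K=\log^2 n$ walks and $n^2$ pairs is not available. I would switch to the Bernstein route, keeping your mean computation as is.
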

\begin{proof}
To prove the statement, we will perform a time-gluing procedure among the first $s$ steps of the walk 
(which is confined \whp in the tree $\mathcal{T}_x(s)$) and the last $h$ steps (where the path to a target end point $y$ is unique). 
Thanks to a partial conditioning on the starting and ending subpaths (of length resp.~$s$ and $h$), 
we will be able to prove a concentration result for the trajectories of length $t$
which will lead to the desired inequality. 

Let us stress that the entire strategy closely follows the proof of Proposition 3.6 in \cite{CCPQ}, 
given in the context of directed configuration models, but requires significant adaptations 
to address the directed Chung--Lu framework.
In particular, while the analysis for the directed configuration model 
relies heavily on combinatorial computations, which reflect the nature 
of the model where the in- and out-degrees are deterministic, 
our approach leverages the independence of connections between vertices, along with appropriate concentration inequalities for the in- and out-degrees. 
This shift is particularly evident in the computations beginning with \eqref{grado}.\\

Given $x\neq y\in[n]$, let $\mathcal{F}=\mathcal{F}(x,y)$ denote the partial environment obtained
after the generation of $\mathcal{T}_x(s)$ and $\mathcal{B}^-_y(h)$. Consider $\kappa_x$ and $\kappa_y$ to be the number of matchings needed to generate respectively the two subgraphs. It holds $\kappa_x=|\mathcal{T}_x(s)|-1$ and $\kappa_y\le |\mathcal{B}^-_y(h)|-1$.

Let $V^-_\mathcal{F}$ denote the set of vertices in $\partial \mathcal{B}^-_y(h)$ 
such that there exists a unique path of length $h$ to $y$, and $V^+_\mathcal{F}$  be the set of unmatched vertices at depth $s$ 
in $\mathcal{T}_x(s)$. 
Note that, by construction,
	\begin{equation} \label{V+}
		\sum_{z \in V^+_\mathcal{F}} \mass(\pat_{x,z}) \le 1,
	\end{equation}
	and
	\begin{equation} \label{V-}
		\sum_{v \in V^-_\mathcal{F}} \muin(v)\mass(\pat_{v,y}) 
		\le \muin P^h(y) =\sum_{v \in [n]} \muin(v)P^h(v,y).
	\end{equation}
With this notation, we develop $\widetilde{P}^t(x,y)$, the probability to follow a nice path of length $t$
from $x$ to $y$, as 
	\begin{equation} \label{ptilda}
		\widetilde{P}^t(x,y)= \sum_{z \in V^+_\mathcal{F}} \sum_{v \in V^-_\mathcal{F}} \mass(\pat_{x,z}) 
		\frac{1}{D^{+}_z}\mass(\pat_{v,y})\one_{\{z \to v\}}\one_{\{\pat \textup{is a nice path}\}},
	\end{equation}
where $\pat=\pat_{x,z}\cup(z,v)\cup\pat_{v,y} $, with a little abuse of notation.
Note that, in this representation of  $\widetilde{P}^t(x,y)$, the last indicator highlights the validity of conditions (i) and (iv) of definition \ref{def} of nice paths, since (ii) and (iii) are satisfied by construction.
	
	We want  study the conditional expectation of \eqref{ptilda} on the partial environment $\mathcal{F}$. 
	By linearity, we are reduced to analyse the random variables $\one_{\{z \to v\}}/D_z^+$ 
	for $z \in V^+_\mathcal{F}$, $v \in V^-_\mathcal{F}$.
	Since the Bernoulli variables $ \one_{\{z \to v\}}$ are independent from the partial environment $\mathcal{F}$, it holds 
	\begin{equation}\label{grado}
		\E\left[ \frac{\one_{\{z \to 	v\}}}{D^+_z}\big| \mathcal{F}\right]=
		p_{zv}\E\left[ \frac{1}{{D}^+_z} \, \big| \mathcal{F}, \one_{\{z\to v\}}=1\right],
	\end{equation}
Some of the indicator functions defining the out-degree of $z$ may have been sampled during the generation of the partial environment $\mathcal{F}=\mathcal{F}(x,y)$. However it holds
$$D_z^+
\ge\sum_{\substack{w \in [n]\,:\\\, (z,w)\notin \, \mathcal{F}\,\cup(z,v)}} \one_{\{z \to w\}} \, +\, \one_{\{z\to v\}}
=: Y^v_z + \, \one_{\{z\to v\}}\,.$$
Thus, we can write
	\begin{equation}
		\E\left[ \frac{\one_{\{z \to v\}}}{D^+_z}\big| \mathcal{F}\right] \le p_{zv} \E\left[\frac{1}{Y_z^v+1} \big|\mathcal{F}\right]=
		\frac{p_{zv}}{\E[Y^v_z| \mathcal{F}]}(1+o(1)),
	\end{equation}
where the last equality follows by Remark \ref{inverse-remark}.
For $y \in [n]$, we then define the events $\mathcal{W}_{y}:=\{\kappa_{y} \le n^{\frac 1 2 +\eps}\}$ 
and $\mathcal{W}:=\cap_{y \in [n]}\mathcal{W}_y$. 
Since $\mathcal{W} \supseteq\mathcal{S}_\eps^-$, where $\mathcal{S}_\eps^-$ is defined in \eqref{S-},
by Lemma \ref{size-in-ball} we get
\begin{equation} \label{W}
	\p(\mathcal{W})\ge \p(\mathcal{S}_\eps^-)=1-o(1)\,.
\end{equation} 
On  $\mathcal{W}_y$, the number of Bernoulli variables removed from $D_z^+$ in the definition of $Y_z^v$ 
 is at most $n^{\frac 12+\eps}$. 
 Moreover, thanks to  \eqref{maxp_xy}, the connection parameter is at most 
 $p_{\text{max}}=o(n^{-\frac 1 2 -\frac \eta 7})$.
 We assume since now on that   $\mathcal{F}\in \mathcal{W}_y$.
 Then, if $\eps < \eta/7$, we get that
$	\E[Y_z^v\,|\mathcal{F}]=\E[D_z^+](1+o(1))=
	\w w_z^+ \log n/n(1+o(1)),$
and consequently we may conclude that 
\begin{equation} \label{one-over-D}
	\E\left[ \frac{\one_{\{z \to v\}}}{D^+_z}\, \big| \mathcal{F}\right]\le\frac{w_v^-}{\w}(1+o(1))=\muin(v)(1+o(1)).
\end{equation}
Taking the conditional average in \eqref{Ptilde} and plugging there \eqref{one-over-D}, 
we obtain that 
\begin{equation}
\begin{split}
		\E[\widetilde{P}^t(x,y)\,|\, \mathcal{F}\,]
		&\le 
		\sum_{z \in V^+_\mathcal{F}} \sum_{v \in V^-_\mathcal{F}} \mass(\pat_{x,z}) 
		\E\left[\frac{\one_{\{z \to v\}}}{D^{+}_z}\,\big|\mathcal{F}\right]\mass(\pat_{v,y})\\
		&\le \sum_{z \in V^+_\mathcal{F}}   \sum_{v \in V^-_\mathcal{F}}\mass(\pat_{x,z})  \muin(v)\mass(\pat_{v,y})(1+o(1)) 
		\\
		& \le \sum_{v \in V^-_\mathcal{F}}  \muin(v)\mass(\pat_{v,y})(1+o(1)) \le \muin P^h(y)(1+o(1)),
\end{split}
\end{equation}
where the last lines follows from \eqref{V+} and \eqref{V-}.
	This implies that for every $\delta>0$ and for $n$ large enough, it holds	
	\begin{equation} \label{F}
		\left(1+\frac \delta 2\right)\E \left[\widetilde{P}^t(x,y) \big| \mathcal{F}\right]\le \left(1+\delta\right) \muin P^h(y)=\left(1+\delta\right)  \widetilde{\pi}(y)\,.
	\end{equation}
Let us consider the random variables
	\begin{equation}
		X_z:=\sum_{v \in V^-_\mathcal{F}} \mass(\pat_{x,z})
		\frac{1}{D^{+}_z} \mass(\pat_{v,y}) \one_{\{z \to v\}}\one_{\{\pat \textup{is a nice path}\}}\, , \quad z \in V^+_\mathcal{F}\,,
	\end{equation}
where $\pat=\pat_{x,z}\cup(z,v)\cup\pat_{v,y} $. 
These random variables are independent. Moreover, thanks to condition (i) of Definition \ref{def}, we have $$\mass(\pat_{x,z})\frac{1}{D^{+}_z}\mass(\pat_{v,y}) \le \frac 1{n\log^3n},$$
and thanks to requirement (iv) of Definition \ref{def}, it holds
	\begin{equation}
		|\{v \in V^-_\mathcal{F} : \pat_{x,z}\cup(z,v)\cup\pat_{v,y}\text{is nice}\}|\le C \log n.
	\end{equation}
	Then, $X_z$ is uniformly bounded in $z \in V^+_{\mathcal{F}}$ by the quantity
	\begin{equation}
		\begin{split}
		M=M(n):= 
			\frac {C \log n}{n\log^3n} =  \frac {C}{n\log^2 n}.
		\end{split}
	\end{equation}
For $a>0$ and $M$ as above,	we can apply the Bernstein inequality to the conditional probability measure $\p(\ \cdot \ |\mathcal{F})$, and get
	\begin{equation} \label{bernstein}
		\p\left(\widetilde{P}^t(x,y)-\E \big[\widetilde{P}^t(x,y) \big| \mathcal{F}\big]\ge a \ \big|\mathcal{F}\right)\le \exp\bigg(-\frac{a^2}{2M( \E [\widetilde{P}^t(x,y) | \mathcal{F}]+a)}\bigg)\,.
	\end{equation}

	Reasoning as in \cite[Prop.~14]{BCS19}, we write $r=n\E [\widetilde{P}^t(x,y) | \mathcal{F}]$ and let 
	$a=\frac\delta n(\frac r 2 +1)
	$. Then the \rhs of \eqref{bernstein} turns to
$$		\exp
		\bigg(-\frac
		{\delta^2(r+ 2)^2}
		{4Mn( r(2+ \delta)+ 2\delta)}
		\bigg) 
		\le  \exp{\left(-\frac{c(\delta)C}{Mn}\right)}
		= \exp{\left(-c(\delta)\log^2 n\right)},
$$
	where $c(\delta)>0$, is obtained optimizing over $r\ge 0$. In this notation, we rewrite \eqref{bernstein} as	
\begin{equation}\label{bern}
		\p\left(\widetilde{P}^t(x,y)\ge \Big( 1+\frac \delta 2\Big)
		\E \left[\widetilde{P}^t(x,y) \big| \mathcal{F}\right]
		+\frac \delta n \ \big|\mathcal{F}\right)
		 \le \exp{\left(-c(\delta)\log^2 n\right)}.
\end{equation}
In conclusion, by \eqref{F} and \eqref{bern}, we get that, for all $\mathcal{F}\in\mathcal{W}_y$,
\begin{equation} \label{powerful}
	\p\left(\widetilde{P}^t(x,y)\ge ( 1+ \delta )\widetilde{\pi}(y) 
	+\frac \delta n \ \big|\mathcal{F}\right) =\exp{\left(-c(\delta)\log^2 n\right)}=o(n^{-3}).
\end{equation}
We are almost done.  Reasoning as in \cite[Prop. 3.6]{CCPQ}, for $x \in \veps$ and $y \in [n]$, let 
$$\mathcal{Z}_{x,y}:= \left\{\widetilde{P}^t(x,y) \ge ( 1+ \delta )\widetilde{\pi}(y)+\frac \delta n\right\}.$$ 

	With a little abuse of notation we can write
$$\p(\cup_{x \in \veps,y \in [n]}\mathcal{Z}_{x,y} \cap \mathcal{W}) 
\le n^2 \max_{x \in \veps,y \in [n]}\p(\mathcal{Z}_{x,y} \cap \mathcal{W}_y) 
\le n^2 \max_{x \in \veps,y \in [n]} \max_{\mathcal{F} \in \mathcal{W}_y}\p(\mathcal{Z}_{x,y} | \mathcal{F})\,,$$
where the last probability is precisely the l.h.s.~in \eqref{powerful}. 
Then, having in mind \eqref{W}, 
\begin{equation}
	\p\big(\cap_{x \in \veps,y \in [n]} \mathcal{Z}_{x,y}\big) 
	\ge 1- \p(\cup_{x \in \veps,y \in [n]}\mathcal{Z}_{x,y} \cap \mathcal{W})-\p(\mathcal{W}^c)=1-o(1)\,,
\end{equation}
which concludes the proof.
\end{proof}

\begin{remark}
	This proof works well also for times $t_\lambda$, lying in the critical window of Theorem \ref{thm_window}, for which $s=(1-4\gamma)\tent(1+o(1))$, as explained in \ref{rem-fine}.
\end{remark}

\subsection{Proof of the lower bound on the mixing time (Eq. \eqref{main1} of Theorem \ref{thm_main})}
 One possible approach to the lower bound consists in achieving inequality \eqref{lower-bound},
 and then applying the law of large number stated in Theorem~\ref{thm_LLN}(ii). 
The bored reader can skip to Subsection \ref{cutoff-window} for this approach. 
 We present here an alternative proof, in the spirit of \cite{CCPQ}, which exploits the equivalent construction of the annealed random walk 
described in \ref{annealed random walk}. \\

The idea of the proof is that, on one hand, the stationary distribution $\pi$ 
is \whp well distributed on $[n]$, in a sense that is specified by Lemma \ref{pi} below
(see also the stronger result stated in the Proposition \ref{prop_piquadro}).
On the other hand, after $t=(1-\beta)\tent$ steps, the random walk concentrates on a set of size at most 
$n^{1-\beta^2}$ which cannot cover the entire graph, and hence the mixing is far to be achieved at this timescale.\\

Formally, for $\beta \in (0,1)$, let  $t=(1-\beta)\tent$ and let $\mathcal{P}_{x,y}^\beta$ denote the set of paths from $x$ to $y$ of lenght $t$ and with probability mass bigger or equal than $1/n^{1-\beta^2}$. 
An easy check shows that 
\begin{equation}\label{UP}
		\sum_{y \in [n]}|\mathcal{P}_{x,y}^\beta|\le n^{1-\beta^2}\,,
	\end{equation}
	and hence the set $S_x:=\{y \in [n]: \mathcal{P}_{x,y}^\beta \neq \emptyset\}$ satisfies $|S_x|\le n^{1-\beta^2}$. 
From the notation of distance in total variation, we can write
\begin{equation}
	\min_{x \in [n]}\|P^t(x,\cdot)-\pi\|_{\text{TV}}
		\ge \min_{x \in [n]} \left(P^t(x,S_x)-\pi(S_x)\right)
		\ge \min_{x \in [n]}	P^t(x,S_x)-\max_{x \in [n]}\pi(S_x).
	\end{equation}
Note that, by definition of $S_x$ and of the quenched probability
$\mathcal Q_{x,t}(\theta)$ in \eqref{quenched-theta}, it holds that
$$P^t(x,S_x)\ge \mathcal Q_{x,t}(n^{1-\beta^2})\,,\qquad \forall x\in[n]\,.$$
We can then apply Theorem~\ref{thm_LLN}(ii) with $\theta=n^{1-\beta^2}$ and $t=(1-\beta)\tent$,
so that the condition 
$\rho=\lim_{n\to\infty} -\frac{\log\theta}{\entropy t}=1+\beta>1$
is satisfied, and conclude that
$$\min_{x \in [n]}	P^t(x,S_x)\ge 1-o_{\mathbb P}(1)\,.$$
Going back to \eqref{UP}, it now remains to show that $\max_{x \in [n]}\pi(S_x)$ is negligible. 
We stress that, by monotonicity of the total variation distance, we may assume $\beta^2<\eta$, where $\eta$ is such that \eqref{assumption} holds. Then, we can  apply the following lemma with 
$\delta:={\beta^2}/{6}$, which provides the desired estimate and ends the proof of the lower bound. \qed
	
\begin{lemma} \label{pi} For all $\delta \in (0,\frac \eta 6)$, with $\eta \in (0,1)$ as in \eqref{assumption}, it holds
	\begin{equation}
		\p(\forall S \subset [n] \text{ such that } |S|\le n^{1-6 \delta}\,:\,\pi(S)\le n^{-\delta/2})=1-o(1).
	\end{equation}
\end{lemma}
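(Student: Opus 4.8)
The plan is to get around the fact that $\pi$ is not explicit by first replacing it with $\nu_T:=\textup{Unif}([n])\,P^T$ for a suitable \emph{poly-logarithmic} time $T$, and then bounding $\nu_T$ on small sets via a second-moment (collision) estimate — exactly the kind of quantity the annealed two-walk construction of Section~\ref{annealed random walk} is built to handle. For the first reduction, let $\bar d(t):=\max_{x,y\in[n]}\|P^t(x,\cdot)-P^t(y,\cdot)\|_{\textup{TV}}$ be the ergodic coefficient, which is non-increasing, submultiplicative ($\bar d(s+t)\le\bar d(s)\bar d(t)$), and satisfies $\max_x\|P^t(x,\cdot)-\pi\|_{\textup{TV}}\le\bar d(t)\le 2\max_x\|P^t(x,\cdot)-\pi\|_{\textup{TV}}$. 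Applying the already proven upper bound \eqref{main2} with $\beta=1$, $\max_x\|P^{t}(x,\cdot)-\pi\|_{\textup{TV}}\to 0$ in $\p$-probability for every $t\ge 2\tent$, so w.h.p.\ $\bar d(t_0)\le\tfrac12$ with $t_0:=\lceil 2\tent\rceil$. Setting $k:=\lceil 6\log n\rceil$ and $T:=kt_0=O(\log^2 n/\log\log n)$, submultiplicativity gives, w.h.p., $\|\nu_T-\pi\|_{\textup{TV}}\le\bar d(T)\le 2^{-k}\le n^{-4}$; hence $\pi(S)\le\nu_T(S)+n^{-4}$ for \emph{every} $S\subseteq[n]$, and it is enough to control $\nu_T(S)$ uniformly over $|S|\le n^{1-6\delta}$.

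For the second step, write $\|v\|_2^2:=\sum_{z\in[n]}v(z)^2$. Since $\nu_T(z)$ is the quenched probability that a walk started from $\textup{Unif}([n])$ is at $z$ at time $T$, running a second conditionally independent walk $X'$ on the same realization gives $\|\nu_T\|_2^2=\sum_z\nu_T(z)^2$, the quenched probability that the two walks collide at time $T$; averaging over the graph,
\[
\E\big[\|\nu_T\|_2^2\big]=\p^{\textup{an},2}_{\textup{Unif}([n])}\big(X_T=X'_T\big).
\]
I would estimate the right-hand side by running the annealed two-walk law of Section~\ref{annealed random walk} for time $T$ (note $T=o(\mumax^{-1})$ since $\mumax=O(n^{-1/2-\eta/6})$, so Lemma~\ref{lemmaiid} applies to each walk). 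On the event that neither walk self-intersects and the two trajectories are disjoint up to time $T$, Lemma~\ref{lemmaiid} — applied to the first walk and then to the second on the graph with the $O(T)$ already-explored vertices removed, exactly as in \eqref{eq3.15}--\eqref{pezzo2} — makes the two endpoints approximate and approximately independent $\muin$-samples, so this event contributes $\sum_z\muin(z)^2(1+o(1))=O(1/n)$ by \eqref{square}; the complementary events (a self-intersection of either walk, or a crossing of the two) contribute $O(T^2/n)$ by the computation of Lemma~\ref{lemma-selfintersection} and its two-walk analogue. Hence $\E[\|\nu_T\|_2^2]=O(\log^{C}n/n)$ for some constant $C$, and Markov's inequality yields $\|\nu_T\|_2^2\le\log^{C+2}n/n$ with probability $1-o(1)$. (This is a cheap instance of the stronger bound on $\sum_z\pi(z)^2$ recorded in Proposition~\ref{prop_piquadro}.)

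To conclude, Cauchy--Schwarz gives, for every $S$ with $|S|\le n^{1-6\delta}$,
\[
\nu_T(S)=\sum_{z\in S}\nu_T(z)\le|S|^{1/2}\|\nu_T\|_2\le n^{(1-6\delta)/2}\cdot\frac{\log^{(C+2)/2}n}{n^{1/2}}=\log^{(C+2)/2}n\;n^{-3\delta},
\]
which is $\le\tfrac12 n^{-\delta/2}$ for $n$ large since $3\delta>\delta/2$. Combined with the reduction of the first paragraph, this gives $\pi(S)\le n^{-\delta/2}$ simultaneously for all such $S$, with probability $1-o(1)$. (The hypothesis $\delta<\eta/6$ is used only through assumption \eqref{assumption}: it is what guarantees $\sum_z\muin(z)^2=O(1/n)$ via \eqref{square} and $\mumax=O(n^{-1/2-\eta/6})$, which keeps $T$ inside the validity range of Lemmas~\ref{lemmaiid}--\ref{lemma-selfintersection}.)

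The main obstacle is the collision estimate of the second paragraph: one must carefully bound the event that the second walk runs for a while along the part of the graph already explored by the first, so that its steps are no longer clean $\muin$-samples. This is precisely why the annealed two-walk construction of Section~\ref{annealed random walk} is needed, and it requires extending the self-intersection bookkeeping of Lemma~\ref{lemma-selfintersection} from one walk to a pair; the submultiplicativity reduction and the final Cauchy--Schwarz step are, by contrast, soft.
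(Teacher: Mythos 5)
Your proposal is correct, and its main step takes a genuinely different route from the paper's. The first reduction is essentially identical: the paper also replaces $\pi$ by $\mu_T:=\textup{Unif}([n])P^T$ for a poly-logarithmic $T$ using the submultiplicativity bound $\|P^{ks}(x,\cdot)-\pi\|_{\textup{TV}}\le(2\|P^{s}(x,\cdot)-\pi\|_{\textup{TV}})^{k}$ together with \eqref{main2} (their Eq.~\eqref{pi-mu}, with $T=2\tent\log^2 n$). Where you diverge is in controlling $\mu_T(S)$ uniformly over small sets. The paper fixes $L=\lceil n^{1-6\delta}\rceil$, takes a very high moment $K=\delta^{-1}L$ of $\mu_T(S)$ via the annealed $K$-walk construction, shows $\p^{\textup{an},K}_{\textup{unif}}(B_j\mid B_{j-1})=o(n^{-2\delta})$ using the H\"older bound $\sum_{v\in S}w_v^-=O(n^{1-3\delta-6\delta^2})$ (this is where $\delta<\eta/6$ enters), and then pays for a union bound over the roughly $n^{L}$ sets of size $L$ with the resulting $o(n^{-L})$ per-set tail. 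You instead prove a single second-moment (two-walk collision) estimate $\E[\|\nu_T\|_2^2]=O(\log^{C}n/n)$ — which is exactly the content of Proposition~\ref{prop_piquadro}, proved there with the same Lemma~\ref{lemmaiid}/Lemma~\ref{lemma-selfintersection} bookkeeping you outline, so the "main obstacle" you flag is already resolved in the paper with no circularity — and convert it by Cauchy--Schwarz into a bound valid \emph{simultaneously} for all $S$, thereby avoiding the exponential union bound and the high-moment computation altogether. Your route is shorter, yields the stronger conclusion $\pi(S)\lesssim n^{-3\delta}\log^{O(1)}n$ rather than $n^{-\delta}$, and uses the hypothesis $\delta<\eta/6$ only through \eqref{square}; the paper's high-moment argument is heavier but is the tool that generalizes when an $\ell^2$ bound on the stationary-type measure is not strong enough to beat the entropy of the family of sets. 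The only cosmetic inaccuracy is your $O(T^2/n)$ for the crossing/self-intersection contribution, which after summing over the meeting time is really $O(T^3/n)$ as in Proposition~\ref{prop_piquadro}; this changes nothing since only a poly-logarithmic factor over $1/n$ is needed.
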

\begin{proof}
Let us first define, for any $y \in [n]$ and $t' \in \mathbb{N}$,
 	\begin{equation}\label{law_t}
		\mu_{t'}(y):=\frac 1 n \sum_{x \in [n]}P^{t'}(x,y)\,.
	\end{equation}
By the properties of the total variation distance (see \cite[4.4]{LevPer:AMS2017}), it holds that
	\begin{equation} 
		\|P^{ks}(x,\cdot)-\pi\|_{\textup{TV}} \le (2\|P^s(x,\cdot)-\pi\|_{\textup{TV}})^k\,,
	\end{equation}
for any $k,s\in\mathbb{N}$. Thanks to the upper bound \eqref{main2},
 it holds \mbox{$\|P^{2\tent}(x,\cdot)-\pi\|_{\textup{TV}} \le 1/2e$}.
Then, choosing $k=\log^2 n$, $s=2\tent$, and setting $T=ks$, we get
	\begin{equation} 
	\|P^{T}(x,\cdot)-\pi\|_{\textup{TV}} \le (2 \|P^{2\tent}(x,\cdot)-\pi\|_{\textup{TV}})^{\log^{{3}/{2}} n}\le e^{-\log^{{3}/{2}} n}\,,
\end{equation}
which implies
	 \begin{equation}\label{pi-mu}
	 	\max_{v \in[n]}|\pi(v)-\mu_T(v)|=o(e^{-\log^{3/2}(n)})\,,
	 \end{equation}
As a consequence, we can prove the thesis for $\mu_T$ in place of $\pi$.
		
To prove the statement, it is now sufficient to show that, given $L:=\lceil n^{1-6\delta}\rceil$, then
	\begin{equation}
		\max_{S:|S|=L}\p(\mu_T(S)\ge n^{-\delta})=o(n^{-L})\,.
	\end{equation}

So let $S\subset [n]$ with $|S|=L$, set $K=\delta^{-1}L$, and consider the annealed law $\p_{\textup{unif}}^{\textup{an},K}$ of the process $(X^{(k)})_{k \in \{1,\dots,K\}}$ 
defined as in Subsection \ref{annealed random walk}, for $T=\log^2 n \cdot \tent$. For every $j\le K$, let $B_j$ be the event defined by the following property: the first $j$ walks end in $S$. It holds
	\begin{equation}
		\E[\mu_T(S)^K]=\p_{\textup{unif}}^{\textup{an},K}(B_K) 
		= \p_{\textup{unif}}^{\textup{an},K}(B_1)\prod_{j=2}^{K} \p_{\textup{unif}}^{\textup{an},K}(B_j  |  B_{j-1})\,.
	\end{equation}
Since $6\delta < \eta$,  we can apply the same argument
used in \eqref{hoelder}, with $p=2+6\delta$, and get 
\begin{equation}
	\sum_{v \in S} w_v^- = O(n^{\frac 12 -\delta}L^{\frac 12 + \delta})=O(n^{1 -3{\delta} - 6\delta^2})\,,
\end{equation}
where we used that $\frac{1}{2+6\delta} < \frac 1 2 - \delta$ and that 
$|S|=L=\lceil n^{1-6\delta}\rceil$.

Given $B_{j-1}$, the $j$-th trajectory can end in $S$ if it replicates from the beginning one of the previous $j-1$ trajectories (this happens with probability at most $\frac {KT} n$), or if it enters at least once the set $S$ or the set formed by the $j-1$ trajectories. Non-fresh vertices (i.e.,~the ones belonging to the previous trajectories) affect only logarithmically  the order of $L$,
 and the probability of entering $S$ from a fresh vertex at a given step is bounded by
	\begin{equation}
		\max_{x \in [n]}\sum_{v \in S}p_{x,v} \le M_1 \frac{\log n}n\sum_{v \in S} w_v^- = o(n^{-{3\delta}}).
	\end{equation}
	Since the $j$-th trajectory has $T=O(\log^3n)$ steps, we conclude that
	\begin{equation}
		\p_{\textup{unif}}^{\textup{an},K}(B_j | B_{j-1}) \le \frac{KT}{n} + T\, o\big(n^{-3\delta}\big) = o(n^{-2\delta}).
	\end{equation}	
This proves that $\E[\mu_T(S)^K]=o(n^{-2\delta K})=o(n^{-2L})$.
By Markov's inequality, and being $K=\delta^{-1}L$, we obtain
\begin{equation}
\p(\mu_T(S)\ge n^{-\delta}) \le \frac{\E[\mu_T(S)^K]}{n^{-L}}=o(n^{-L})\,,
\end{equation}
and conclude with a union bound on the $O(n^{L})$ sets $S$ with $|S|=L$.
\end{proof}

\subsection{Proof of Theorem \ref{thm_window}: Cutoff window} \label{cutoff-window}

We are going to provide upper and lower bounds on the total variation distance
which appears in the statement.

\textbf{We first prove the upper bound.}\\
Recall the notation introduced in the Theorems \ref{thm_LLN}, \ref{CLT} and in Eq.~\eqref{q-secondo}, 
and take a reference time 
$\tcrit:=\tent+\lambda \textbf{\textup{w}}_n+o(\textbf{\textup{w}}_n)$ with  
$\lambda \in \mathbb{R}$ fixed. 
Since $\var(S_{t_{\lambda}})=\sigma^2 t_{\lambda}$, choosing $\theta=\frac 1 n$, it holds  that
	\begin{equation}
		\begin{split}
		\frac{\entropy t_{\lambda} + \log \theta}{\sqrt{\var(S_{t_{\lambda}})}} = 
		&
		\frac
		{\lambda \sigma\sqrt{\frac{\log n}{\log \log n}} (1+o(1))}
		{\sigma\sqrt{\frac{\log n}{\log \log n}(1+o(1))}}
				\xrightarrow[n \to +\infty]{} \lambda\, , 
		\end{split}
	\end{equation}
and we are then under the hypothesis \eqref{lambda} of Theorem~\ref{CLT}.
Thanks to this result, together with the inequality \eqref{qto0}, we get that
 for every $\delta>0$ and \whp
	\begin{equation}
		\max_{x \in \veps}\widetilde{q}(x) \le \int_{\lambda}^{+\infty} \frac{1}{\sqrt{2\pi}}e^{-\frac{u^2}{2}} \,du +\delta\,.
	\end{equation}
Applying Proposition \ref{drop-positive-part}, we may conclude that for every $\delta>0$ 
	and \whp
	\begin{equation}
			\|P^{t_{\lambda}}(x,\cdot)-\widetilde{\pi}\|_{\textup{TV}} \le 2\delta+\widetilde{q}(x) 
			\le \int_{\lambda}^{+\infty} \frac{1}{\sqrt{2\pi}}e^{-\frac{u^2}{2}} \,du+3\delta \,.
	\end{equation}
	
\textbf{For the lower bound}, we first observe that, for $\theta \in (0,1)$, 
	\begin{equation}
		P^{t_{\lambda}}(x,y) \ge \sum_{\pat \in \mathcal{P}(x,y,t_{\lambda},G)} \mass(\pat)\one_{\mass(\pat)\le \theta}\,.
	\end{equation}
	Then, for every distribution $\nu$ on $[n]$,
	\begin{equation}
		\nu(y)-\sum_{\pat \in \mathcal{P}(x,y,t_{\lambda},G)} \mass(\pat)\one_{\mass(\pat)\le \theta}\le \left[\nu(y)-P^{t_{\lambda}}(x,y)\right]^+ + \nu(y)\one_{P^{t_{\lambda}}(x,y)>\theta}.
	\end{equation}
	Summing over $y \in [n]$, using that there are less than $1/\theta$ vertices 
	such that $P^{t_{\lambda}}(x,y)>\theta$, and by the Cauchy-Schwarz inequality, we get
	\begin{equation} \label{ineq}
		\cQ_{x,t_{\lambda}}(\theta)\le 
		\|\nu-P^{t_{\lambda}}(x,\cdot)\|_{\textup{TV}}+ 
		\sqrt{\frac{1}{\theta}\sum_{y \in [n]}\nu^2(y)}\,.
	\end{equation}
	We just need to show that for suitable choices of $\nu$ and $\theta \in [0,1]$, 
	\eqref{ineq} implies the claimed statement.
\begin{enumerate}
\item	
A quite straightforward proof of this fact can be done under a further assumption
on the  weigths $(w_x^-)_{x\in[n]}$. 
Explicitly, let $w_{\text{max}}^-(n):= \max_{x \in [n]}w_x^-$, and assume that
\begin{equation}\label{extra}
w_{\text{max}}^-(n)= o(e^{\sqrt{\log n}})\,.
\end{equation}
Choosing $\nu=\widetilde{\pi}$ and $\theta=w_{\text{max}}^-(n)\frac{ \log^4 n}{n}$, 
we want to show that
	\begin{equation} \label{Epi}
		\frac 1 \theta \E\left[\sum_{x \in [n]}\widetilde{\pi}^2(y)\right]=o\left(1\right)\,.
	\end{equation}
Then Markov's inequality will be sufficient to conclude that 
\begin{equation}\label{ultima}
			\cQ_{x,t_{\lambda}}(\theta)\le 
		\|\nu-P^{t_{\lambda}}(x,\cdot)\|_{\textup{TV}}+ o_{\p}(1)\,,
\end{equation} 
and the desired lower bound will be a consequence of the central limit Theorem~\ref{CLT}.

To prove \eqref{Epi}, first note that, since $\widetilde{\pi}=\muin P^h$,  
$$
\E\left[\sum_{x \in [n]}\widetilde{\pi}^2(y)\right]= \p^{\textup{unif}}_{\muin}(X^{(1)}_h=X^{(2)}_h)\,,
$$
where  $X^{(1)}$ and $X^{(2)}$ are two random walks as defined in Subsection \ref{annealed random walk},
 and with initial distribution $\muin$.
Thanks to assumptions \eqref{maxa-} and \eqref{extra}, the probability that they start 
from the same vertex is less than 
	\begin{equation*}
		\mumax= o(w_{\text{max}}^-(n)/n)=o(1),
	\end{equation*}
	On the other hand, the probability that $X^{(2)}$ meets $ X^{(1)}$ at a certain step 
	$0<s\le h$ is less than $(h+1)^2 p_{\text{max}}$, where $p_{\text{max}}=o(w_{\text{max}}^-(n)\log n/n)$. 
Thanks to the assumption \eqref{extra}, we globally get  
	\begin{equation}
			\frac 1 \theta \E\left[\sum_{x \in [n]}\widetilde{\pi}^2(x)\right]
			=\frac 1 \theta \p^{\textup{unif}}_{\muin}(X^{(1)}_h=X^{(2)}_h)
			=O\left(\frac 1 {\log n}\right),
	\end{equation}
and thus conclude the proof of \eqref{Epi} and of the lower bound.

\item To get rid of assumption \eqref{extra}, we may proceed in a similar way but 
choosing $\nu = \pi$ and $\theta = \frac{1}{n}\log^{8} n$, so to stay again under the hypothesis 
\eqref{lambda} of Theorem \ref{CLT}.
To recover the analogue of \eqref{Epi}, with $\pi$ instead of $\widetilde{\pi}$, we will
need to apply the Proposition \ref{prop_piquadro} below.
Given this, we can easily recover the estimate \eqref{Epi}, 
and then conclude with the application of Theorem~\ref{CLT}. \qed
\end{enumerate}
\begin{prop} \label{prop_piquadro} 
In the above notation and setting, it holds that
	\begin{equation}
		\E\left[\sum_{x \in [n]}\pi^2(x)\right]	\le C_1\frac{\log^{6}n}n\,,
	\end{equation}
where $C_1>0$ is the finite constant given in Lemma \ref{lemma-selfintersection}. 
\end{prop}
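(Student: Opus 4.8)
The plan is to compare the stationary measure with the averaged $T$-step law. Fix $T=\log^{6}n$, as in Lemma~\ref{lemma-selfintersection}, and set $\mu_T(x):=\frac1n\sum_{y\in[n]}P^T(y,x)$. Two observations drive the proof. First, expanding the square and taking the expectation through as in Subsection~\ref{annealed random walk},
\begin{equation*}
\E\Big[\sum_{x\in[n]}\mu_T^2(x)\Big]=\p^{\textup{an},2}_{\textup{unif}}\big(X^{(1)}_T=X^{(2)}_T\big),
\end{equation*}
where $X^{(1)},X^{(2)}$ are the two annealed walks of length $T$ with uniform starting points built in Subsection~\ref{annealed random walk}. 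Second, $\|\mu_T-\pi\|_\infty\le d(T):=\max_{x}\|P^T(x,\cdot)-\pi\|_{\textup{TV}}$ deterministically, so $\big|\sum_x\pi^2(x)-\sum_x\mu_T^2(x)\big|\le 2\,d(T)$. Since $T\gg\tmix=\tent(1+o(1))$, on the high-probability event where $P$ is already well mixed at a constant multiple of $\tent$ the submultiplicativity of the Dobrushin coefficient forces $d(T)$ to be super-polynomially small; on the complementary event one uses $\sum_x\pi^2(x)\le\max_x\pi(x)\le1$ (and likewise for $\mu_T$), which is negligible provided that event has probability $o(\log^{18}n/n)$. Thus it suffices to prove $\p^{\textup{an},2}_{\textup{unif}}(X^{(1)}_T=X^{(2)}_T)\le C_1\log^{18}n/n$.

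To estimate this collision probability I would run the two walks one after the other and split into cases. If $X^{(1)}$ or $X^{(2)}$ self-intersects before time $T$, then $\mathcal{A}_T^c$ holds for that walk; since the marginal law of each $X^{(i)}$ is that of a single annealed walk from the uniform distribution, Lemma~\ref{lemma-selfintersection} bounds each of these two contributions by $C_1\log^{12}n/n$. On the event $\mathcal{A}_T^{(1)}\cap\mathcal{A}_T^{(2)}$, distinguish whether the trajectory of $X^{(2)}$ ever meets that of $X^{(1)}$. If it does not, then conditionally on $X^{(1)}$ the walk $X^{(2)}$ is, up to its final step, a self-avoiding walk on the Chung--Lu graph with the at most $T+1$ vertices of $X^{(1)}$ removed, so by Lemma~\ref{lemmaiid} each step is an approximate $\muin$-sample and $\p(X^{(2)}_T=z\mid X^{(1)})\le\muin(z)(1+o(1))$ uniformly; combined with $\p(X^{(1)}_T=z,\mathcal{A}_T)=\muin(z)(1+o(1))$ (Lemma~\ref{lemmaiid} again) and summed over $z$, this produces the main term $\sum_{z\in[n]}\muin(z)^2(1+o(1))\le C_1/n$ by \eqref{square}.

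The remaining, and hardest, case is that the two trajectories intersect yet both walks terminate at the same vertex $z$. Here the naive estimate fails: the probability that $X^{(2)}$ first meets $X^{(1)}$ at a given step, started from a fresh vertex, is only $O((T+1)\mumax)=o(Tn^{-1/2})$ per step, and $Tn^{-1/2}\gg\log^{18}n/n$. The resolution is to exploit that after the first meeting, at some vertex $X^{(1)}_j$ at a time $s\le T$, the walk $X^{(2)}$ must still reach $X^{(1)}_T$ in the remaining $T-s$ steps: if it immediately leaves the trajectory of $X^{(1)}$ this costs a further $\muin$-factor on the last step (Lemma~\ref{lemmaiid}), while if it copies $X^{(1)}$ to the end this forces $j=s$ and costs the geometric factor $\mass(\pat_{X^{(1)}_s,X^{(1)}_T})\le 2^{-(T-s)}$ (on $\{\mindeg\ge2\}$). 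Carrying these gains and averaging over $X^{(1)}$ — whose position at any time $s$ is itself $\muin$-distributed up to $(1+o(1))$, so that the stray $\muin(X^{(1)}_s)$ factors average to $\sum_z\muin(z)^2\le C_1/n$ — keeps this contribution at $o(T^2/n)$; a clean way to organize the bookkeeping is a self-bounding Cauchy--Schwarz estimate that expresses this term as $O\big(\sqrt{A/n}\big)$ with $A:=\E[\sum_x\pi^2(x)]$, after which the inequality $A\le C_1/n+o(\log^{12}n/n)+O(\sqrt{A/n})$ closes and yields $A=O(1/n)$, comfortably within $C_1\log^{18}n/n$.

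The main obstacle is precisely this intersection term: one must use that $X^{(2)}$ is constrained not merely to touch $X^{(1)}$ but to land on its prescribed endpoint, and tracking this cancellation against the sum over the meeting time (a factor $\le T$) and over the law of $X^{(1)}$ is delicate. A secondary technical point is the quantitative mixing input needed for the $\pi\leftrightarrow\mu_T$ replacement at the level of expectations, for which one upgrades the $o(1)$ error in \eqref{main2} to a polynomially small bound using the Bernstein-type tail of $S_t$ already present in the proof of Theorem~\ref{thm_LLN}.
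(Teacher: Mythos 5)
Your first two observations—replacing $\pi$ by the averaged $T$-step law $\mu_T$, and rewriting $\E[\sum_x\mu_T^2(x)]$ as the annealed two-walk collision probability $\p^{\textup{an},2}_{\textup{unif}}(X^{(1)}_T=X^{(2)}_T)$—are exactly the paper's starting point. But your case analysis of the collision probability diverges from the paper and introduces a spurious difficulty.

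The ``hardest case'' you worry about (the trajectories intersect before time $T$ \emph{and} both walks terminate at the same vertex) is not something the paper needs to analyze at all. The paper simply uses the crude upper bound
\begin{equation*}
\p^{\textup{an},2}_{\textup{unif}}\big(X^{(1)}_T=X^{(2)}_T\big)\le \p^{\textup{an},2}_{\textup{unif}}(\mathcal{T}\le T),
\end{equation*}
where $\mathcal{T}$ is the first time $X^{(2)}$ hits any site of the (whole) trajectory of $X^{(1)}$, and then bounds $\p^{\textup{an},2}_{\textup{unif}}(\mathcal{T}=t)$ for each $t\le T$. The key point you misstate is the size of this meeting probability. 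You estimate it via a $(T+1)\mumax$ per-step union bound, which indeed blows up to $T^2n^{-1/2}$ and would be too large. But this throws away the fact that $X^{(1)}_s$ is \emph{also} approximately $\muin$-distributed. As in Lemma~\ref{lemma-selfintersection}, one conditions on the entire path $(X^{(1)}_0,\dots,X^{(1)}_T)$, applies Lemma~\ref{lemmaiid} on the reduced Chung--Lu graph to get $\p(X^{(2)}_t=z\mid\cdots)=\muin(z)(1+o(1))$, and then applies Lemma~\ref{lemmaiid} \emph{again} to the position $X^{(1)}_s$ to pick up a second factor $\muin(z)$. Summing over $z$ gives $\sum_z\muin(z)^2\le C_1/n$ (equation~\eqref{square}), not $\mumax$. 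Together with the $\mathcal{A}$-complement term (Lemma~\ref{lemma-selfintersection}, contributing $C_1\log^{12}n/n$) one gets $\p^{\textup{an},2}_{\textup{unif}}(\mathcal{T}=t)\le C_1\log^{12}n/n(1+o(1))$, and summing over $t\le T=\log^6 n$ yields $C_1\log^{18}n/n$. You actually observe in passing that ``the stray $\muin(X^{(1)}_s)$ factors average to $\sum_z\muin(z)^2\le C_1/n$'' --- once you apply this consistently, the whole ``hardest case'' evaporates and the self-bounding Cauchy--Schwarz iteration, which you never make precise and which does not resemble anything in the paper, is unnecessary.

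A second, more minor point: you propose to upgrade the $o(1)$ in~\eqref{main2} to a polynomial tail via Bernstein-type estimates in order to control the $\pi\leftrightarrow\mu_T$ replacement on the bad event. The paper instead just uses submultiplicativity of total variation ($\|P^{ks}(x,\cdot)-\pi\|_{\textup{TV}}\le(2\|P^s(x,\cdot)-\pi\|_{\textup{TV}})^k$ with $k=\log^2 n$, $s=2\tent$) to push $\max_v|\pi(v)-\mu_T(v)|$ down to $o(e^{-\log^{3/2}n})$ on the good event, and does not attempt the polynomial-probability upgrade you sketch. Your concern about the probability of the bad event is a legitimate subtlety, but your proposed remedy is a different and substantially harder route than what the paper does.
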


\begin{proof}
	Let $T=\log^2 n$.
	Let $X^{(1)}$ and $X^{(2)}$ be two independent random walks of length $T$, moving on the same random graph
and with initial distribution $\textup{Unif}([n])$.
Note that, according to Subsection \eqref{annealed random walk}, their joint annealed law is equivalently described
by the measure $\p_{\textup{unif}}^{\textup{an,2}}$, that for the sake of readability we simply write $\punifan$.

Denoting by $\mu_{T}$ their common distribution at time $T$, as in \eqref{law_t}, 
we can immediately argue from \eqref{pi-mu} that being $T\gg \tent$, then
	\begin{equation}
		\E\left[\sum_{x \in [n]}(\pi(x))^2\right]
		=\E\left[\sum_{x \in [n]}(\mu_T(x))^2\right](1+o(1))
		=\punifan(X^{(1)}_T=X^{(2)}_T)(1+o(1))\,.
	\end{equation}
We then focus on the probability on the \rhs of the above display, that will be estimated
using similar ideas to those appeared in Lemmas \ref{lemma-selfintersection} and \ref{lemmaiid}.

At first, let $\mathcal{T}$ denote the first time such that the trajectory of $X^{(2)}$ meets 
that of $X^{(1)}$, 
formally given by
$\mathcal{T}:=\min\{s >0\, :\, \exists u\le T \mbox{ such that } X_s^{(2)}=X_u^{(1)} \}\,,$
so that
	\begin{equation}\label{fine}
		\begin{split}
			\punifan(X^{(1)}_T=X^{(2)}_T) 
		\le \punifan(\mathcal{T} \le T) 
			 = \sum_{t=0}^T \punifan(\mathcal{T} =t ). 
		\end{split}
	\end{equation}
Since the initial measure is uniform over $[n]$, we immediately get that
$\punifan (\mathcal{T}=0)\le \frac{T}{n}$.

For $t=1,\dots, T$, it is instead convenient to consider the events 
$$\mathcal{A}_s^{j}\equiv \mathcal{A}_s^{X^{(j)}}\,,\,\mbox{ for }j=1,2 \mbox{ and } s\in\{0,\ldots T\}\,,$$
given in \eqref{event_A}, and to introduce,  for any $s,t\in\{0,\ldots T\}$,
 the events
$$\mathcal{B}_{s,t}^{1,2}:=\big\{X^{(2)}_{v}\neq X^{(1)}_u ,\, \forall  u\in\{0,\ldots, s-1\} 
\mbox{ and }v\in\{s,\ldots, t-1\}\big\}\,,
$$
which are analogues of the events defined in \eqref{event_B}.
With this notation, we can first write
\begin{equation}\label{tau}
	\punifan(\mathcal{T}=t)\le \punifan(\mathcal{T} = t,
\mathcal{A}_t^2)+\punifan((\mathcal{A}_t^2)^c)\,,
	\end{equation}
	where $\punifan((\mathcal{A}_t^2)^c)\le  \punifan((\mathcal{A}_T^2)^c)\le  C_1\log^{4}n/n$ 
	due to Lemma \eqref{lemma-selfintersection}, and then express the first summand as
	\begin{equation}\label{totale}
		\begin{split}
			\punifan(\mathcal{T} = t,\mathcal{A}_t^2) 
			&= 
			\sum_{s=0}^T \sum_{z \in [n]}	\punifan(X^{(2)}_t= X^{(1)}_s=z,\mathcal{B}_{T,t}^{1,2}\cap\mathcal{A}_t^2)\,.
			\end{split}
	\end{equation}

Conditioning over the whole trajectory of $X^{(1)}$, we get
	\begin{equation}\label{mela}
		\begin{split}
			&\punifan(X^{(2)}_t= X^{(1)}_s=z,\mathcal{B}_{T,t}^{1,2}\cap\mathcal{A}_t^2)\\
			&=\sum_{\substack{v \in [n]^{T+1}: \\ v_s=z}} \punifan\left(X^{(2)}_t= z,\, \mathcal{A}_{t}^2
			|(X^{(1)}_s)_{s\le T}=v, \mathcal{B}_{T,t}^{1,2}\right)
			 \punifan\left((X^{(1)}_s)_{s\le T}=v, \mathcal{B}_{T,t}^{1,2} \right)= \\
			&=\sum_{\substack{v \in [n]^{T+1}: \\ v_s=z}} \widetilde{\p}^{\textup{an}}(X_t= z,\mathcal{A}_t) 
			\punifan\left((X^{(1)}_s)_{s\le T}=v, \mathcal{B}_{T,t}^{1,2} \right)\, ,
		\end{split}
	\end{equation}
	where 	$\widetilde{\p}^{\textup{an}}(\cdot):=\widetilde{\E}[\quench_{\textup{unif}}(\cdot)]$ 
	denotes the annealed law induced by a Chung--Lu probability measure $\widetilde{\p}$ on a graph 
	with vertex-set $[n] \setminus \{v_k\}_{k\in[0,T]\setminus \{s\}}$ and $X$ is a simple random walk 
	with initial uniform distribution. 
	To the sake of readability we do not stress the dependence of $\widetilde{\p}$ on the path $v$.  
	
Thanks to  Lemma \ref{lemmaiid}, $\widetilde{\p}^{\textup{an}}(X_t= z,\mathcal{A}_t)\le \widetilde{\p}^{\textup{an}}(X_t= z,\mathcal{L}_{t-1})=\muin(z)(1+o(1))$
uniformly over the paths \mbox{$v\in [n]^{T+1}$} so that, inserting this value in the last display, 
 we get
\begin{equation}\label{int}
\begin{split}
\punifan(X^{(2)}_t= X^{(1)}_s=z,\mathcal{B}_{T,t}^{1,2}\cap\mathcal{A}_t^2)
&\le \muin(z)\punifan(X^{(1)}_s=z, \mathcal{B}_{T,t}^{1,2})(1+o(1))\,.
\end{split}
\end{equation}
As a further application of Lemmas  \ref{lemmaiid} and \ref{lemma-selfintersection}, it holds
that
$$\punifan(X^{(1)}_s=z, \mathcal{B}_{T,t}^{1,2})\le 
\punifan(X^{(1)}_s=z) \le \punifan(X^{(1)}_s=z,\mathcal{A}^{1}_s) + 
	\punifan((\mathcal{A}^{1}_s)^c)\le\muin(z)(1+o(1))\,,$$
and altogether, going back to Eq.~\eqref{totale} and replacing the value of $T$, we obtain  
	\begin{equation}
		\begin{split}
			\punifan(\mathcal{T} = t,\mathcal{A}_t^2) 
			&\leq 
			\sum_{s=0}^T \sum_{z \in [n]}\muin(z)^2(1+o(1)) 
			 =O \left( \frac{T}{n}\right)\,,
		\end{split}
	\end{equation}
where in the last identity we used the approximation \eqref{square}.
We conclude that the leading term in \eqref{tau} is indeed provided by $\punifan({\mathcal{A}_t^2}^c)$,
so that
		$$\punifan(\mathcal{T}=t)\le  C_1\frac{\log^{4}n}{n} (1+o(1))\,, $$
which inserted  	in \eqref{fine} yields the claimed inequality.
\end{proof}


\begin{thebibliography}{}
		
		
		
	
	\bibitem{ACHQ} {Luca Avena, Federico Capannoli, Rajat~S.~Hazra, and Matteo Quattropani.}
	\newblock Meeting, coalescence and consensus time on random directed graphs. 
	\newblock Preprint 2023. \textit{arXiv:2308.01832}  
	
	\bibitem{AveGulVDHdH:DirConf2018}
	Luca Avena, Hakan G{\"u}ldaş, Remco van~der Hofstad, and Frank den
	Hollander.
	\newblock Random walks on dynamic configuration models: a trichotomy.
	\newblock {\em Stochastic Processes Appl.}, 129(9): 3360--3375, 2019
	
	\bibitem{AGHHN} Luca Avena, Hakan Güldaş, Remco van der Hofstad, Frank den Hollander, and Oliver Nagy.
	\newblock Linking the mixing times of random walks on static and dynamic random graphs
	\newblock{ \em Stochastic Processes Appl.}, 153: 145--182, 2022
	

	\bibitem{BS17}
	Anna Ben-Hamou and Justin Salez.
	\newblock Cutoff for nonbacktracking random walks on sparse random graphs.
	\newblock {\em Ann. Probab.}, 45(3): 1752--1770, 2017
	
	\bibitem{B}
	Anna Ben-Hamou. 
	\newblock A threshold for cutoff in two-community random graphs {\em Ann. Appl. Probab.},
	 30(4): 1824--1846, 2020
	
	
	\bibitem{BCS18}
	Charles Bordenave, Pietro Caputo, and Justin Salez.
	\newblock Random walk on sparse random digraphs.
	\newblock {\em Probab. Theory Relat. Fields}, 170(3): 933--960, 2018
	
	\bibitem{BCS19}
	Charles Bordenave, Pietro Caputo, and Justin Salez.
	\newblock Cutoff at the ``entropic time'' for sparse markov chains.
	\newblock {\em Probab. Theory Relat. Fields}, 173(1): 261--292, 2019
	
	\bibitem{BLPS}
	Nathanael Berestycki, Eyal Lubetzky, Yuval Peres, and Allan Sly.
	\newblock Random walks on the random graph.
	\newblock {\em Ann. Probab.}, 46(1): 456--490, 2018.
	

	\bibitem{CCPQ} 
	Xing~S. Cai, Pietro Caputo, Guillem Perarnau, and Matteo Quattropani.
	\newblock  Rankings in directed configuration models
	with heavy tailed in-degrees. 
	\newblock {\em Ann. Appl. Probab.}, 33 (6B): 5613--5667, 2023
	
	\bibitem{CC}
	Junyu Cao and Mariana Olvera-Cravioto
	\newblock Connectivity of a general class of inhomogeneous random digraphs.
	\newblock {\em Random Structures Algorithms}, 56: 722--774, 2020

	\bibitem{CQ20} {Pietro~Caputo and Matteo~Quattropani.} Stationary distribution and cover time of sparse directed configuration models. {\em Probab. Theory Relat. Fields}, {178}: 1011--1066, 2020

	\bibitem{CQ21} {Pietro~Caputo and Matteo~Quattropani.} Mixing time of PageRank surfers on sparse random digraphs. {\em Random Structures Algorithms},
	{59}: 376--406, 2021
	
	\bibitem{CQ21tricotomy} 
	Pietro Caputo and Matteo Quattropani.	
	\newblock {Mixing time trichotomy in regenerating dynamic digraphs}
	{\em Stochastic Processes Appl.}, 137: 222--251, 2021
	\black

	\bibitem{chen14}
	Ningyuan Chen, Nelly Litvak, and Mariana Olvera-Cravioto.
	\newblock PageRank in scale-free random graphs.
	In: Algorithms and models for the web graph, pp 120--131.
	\newblock {\em Lecture Notes in Comput. Sci.}, 8882, Springer, Cham, 2014	
	
	\bibitem{chen17}
	Ningyuan Chen, Nelly Litvak, and Mariana Olvera-Cravioto.
	\newblock Generalized pagerank on directed configuration networks.
	\newblock {\em Random Structures Algorithms}, 51(2): 237--274, 2017
	

	\bibitem{ChungLu}
	Fan Chung and Linyuan Lu. 
	\newblock Connected components in random graphs with given expected degree sequences.
	\newblock {\em Ann. Comb.}, 6(2): 125--145, 2002
	
	\bibitem{Cooper12}
	Colin Cooper.
	\newblock Random walks, interacting particles, dynamic networks: randomness can be helpful. 
	In:  Structural information and communication complexity, pp. 1--14. 
	\newblock {\em Lecture Notes in Comput. Sci.}, 6796 Springer, Heidelberg, 2011
 
	\bibitem{CF02}
	Colin Cooper and Alan Frieze.
	\newblock The size of the largest strongly connected component of a random
	digraph with a given degree sequence.
	\newblock {\em Combin. Probab. Comput.}, 13(3): 319--337, 2004	

	\bibitem{CF12}
	Colin Cooper and Alan Frieze.
	\newblock Stationary distribution and cover time of random walks on random di-graphs.
	\newblock {\em J. Comb. Theory Ser. B}, 102(2): 329--362, 2012 
	\black
	
	\bibitem{Dia96}
	Persi Diaconis.
	\newblock The cutoff phenomenon in finite markov chains.
	\newblock {\em Proc. Natl. Acad. Sci. USA},
	93(4): 1659--1664, 1996
	\bibitem{FR}
     Nikolaos Fountoulakis and Bruce Reed.
    \newblock The evolution of the mixing rate of a simple random walk on the giant component 
      of a random graph.
     \newblock {\em Random Structures Algorithms}, {33(1)}: 68--86, 2008
   
	\bibitem{Freedman} {David~A. Freedman.} 
	\newblock On tail probabilities for martingales.
	\newblock {\em Ann. Probab.}, {3(1)}: 100--118, 1975 
	
	\bibitem{GvHL18}
	Alessandro Garavaglia, Remco van~der Hofstad, and Nelly Litvak.
	\newblock Local weak convergence for pagerank.
	\newblock {\em Ann. Appl. Probab.}, 30(1): 40--79, 2020
	
	\bibitem{vdH16}
	Remco van~der Hofstad.
	\newblock {\em Random Graphs and Complex Networks.}
	\newblock  Cambridge University Press, 2016
	
	\bibitem{klenke} {Achim Klenke.} 
	\newblock {\em  Probability Theory: A Comprehensive Course. }Springer, London, 2014
	
	\bibitem{LevPer:AMS2017} {David~A. Levin and Yuval Peres.}
	\newblock {\em Markov {C}hains and {M}ixing {T}imes}.
	\newblock American Mathematical Society, Providence, RI, 2017.
	\newblock Second edition. With contributions by Elizabeth L. Wilmer, With a
	chapter on ``Coupling from the past'' by James G. Propp and David B. Wilson.


	\bibitem{ls10}
	Eyal Lubetzky and Allan Sly.
	\newblock Cutoff phenomena for random walks on random regular graphs.
	\newblock {\em Duke Math. J.}, 153(3): 475--510, 2010
	

	\bibitem{ST} Perla Sousi, Sam Thomas. 
	\newblock Cutoff for random walk on dynamical Erdős-Rényi graph.
	\newblock {\em Ann. Inst. Henri Poincaré Probab. Stat.}, 56(4): 2745--2773, 2020
	
	
	\end{thebibliography}
\end{document}